\newtheorem{theorem}{Theorem}[section]
\newtheorem{lemma}[theorem]{Lemma}
\newtheorem{proposition}[theorem]{Proposition}
\newtheorem{corollary}[theorem]{Corollary}
\newtheorem{remark}[theorem]{Remark}
\newcommand{\pandzic}{Pand$\check{\mathrm{z}}$i\`c~}
\newcommand{\dolbeault}{\overline{\partial}}
\begin{document}

\baselineskip=18pt
\sloppy
\title[Translation of Dolbeault representations]
{Translation of Dolbeault representations
on reductive homogeneous spaces}
\author{N. Prudhon}
\thanks{{\em 2000 Mathematics Subject Classification}. Primary 22E46; Secondary 22F30}
\keywords{Dolbeault cohomology; Cohomological induction; Translation functors}
\address{Universit\'e de Lorraine, Institut Elie Cartan de Lorraine, UMR 7502 - CNRS, France}
\email{nicolas.prudhon@univ-lorraine.fr}

\begin{abstract}
We adapt techniques used in the study of the cubic 
Dirac operator on homogeneous reductive spaces to the Dolbeault 
operator on elliptic coadjoint orbits to prove that cohomologically  induced 
representations have an infinitesimal character, that
cohomological induction and Zuckerman translation 
functor commute and give a geometric interpretation
of the Zuckerman translation functor in this context.
\end{abstract}

\maketitle

\section*{Introduction}
In their proof of a conjecture of Vogan on
Dirac cohomology for semi simple Lie groups 
Huang and \pandzic \cite{HuangPandzic2002} 
introduced a differential
complex whose differential is given by
the commutator with the algebraic Dirac operator.
The cohomology of this complex is computed using
its commutative analogue given by the symbol map.
Later Alekseev and Meinrenken \cite{AM00,AM05} gave
an interpretation of their computation in terms
of the non commutative Weil algebra and the Chern-Weil homomorphism that eventually leads to an easy proof 
of the Duflo theorem for quadratic Lie algebras
as well as a theorem of Rouviere for symmetric pairs.
Here we use these powerful techniques with the
Dolbeault operator instead and recover easily some 
already known results on representation theory.
In particular we show  that Dolbeault cohomology
representations have an infinitesimal
character, that the Zuckerman translation functor
is well defined in this geometric context and commute
with Dolbeault cohomology induction. Moreover we give
a simple geometric interpretation of the Zuckerman
translation functor for these modules. 
In fact it appears to be a projection in the fibers
of the vector bundles involved. This also includes
a proof of a theorem of Casselman and Osborne
as well as results of Kostant on 
$\mathfrak{u}$-cohomology. Similar proofs of Casselman-Osborne
theorem also appear in \cite{HuangPandzicRenard-arxiv} 
and an unpublished work of M. Duflo \cite{duflo}.

Let $G$ be a connected real reductive Lie group with complexified 
Lie algebra $\mathfrak{g}$. 
By reductive we mean that $\mathfrak{g}$ decomposes as 
$[\mathfrak{g},\mathfrak{g}]+\mathfrak{z}$, 
where $\mathfrak{z}$ denotes the 
center of  $\mathfrak{g}$. If $K/Z$ is a maximal compact subgroup of 
$G/Z$, where $Z$ is the center of $G$, then $K$ is the fixed 
point group of a Cartan involution $\theta$ of $G$. 
Write $\mathfrak{g}=\mathfrak{k}+\mathfrak{s}$ for 
the corresponding Cartan decomposition of 
$\mathfrak{g}$, where $\mathfrak{k}$ is the 
complexified Lie algebra of $K$. Let $G^\mathbb{C}$,
the complexified Lie group of $G$. We have
$\mathrm{Lie}(G^\mathbb{C})=\mathfrak{g}$ 
and we will use the same convention for various
subgroups of $G$. We fix a $\mathrm{Ad}(G)$-invariant 
bilinear form $B$ on $\mathfrak{g}$ that coincides
with the Killing form on 
$[\mathfrak{g},\mathfrak{g}]$.

If $H$ is a closed connected reductive $\theta$-subgroup of $G$ and $(V,\tau)$  a finite dimensional representation of $H$, we then have a  finite rank complex vector bundle
$\mathcal{V}=G\times_H V$ over $G/H$. We will consider 
smooth sections of such a vector bundle.
This space $\Gamma(V)$ of smooth sections 
is identified with the spaces $(C^\infty(G)\otimes V)^H$ or $C^\infty(G,\,V)^H$.
If $V$ is a smooth infinite dimensional representation of $H$, this last space
is still well defined, and the tensor product in the first space stands for the projective tensor product. They are again canonically isomorphic and we continue 
to call them the space of sections of the associated infinite dimensional bundle.
The homogeneous spaces we will consider in the sequel are elliptic coadjoint orbits. 
These spaces may be realized as measurable open $G$-orbits $Y=G/H$
in the complex flag manifold $Z=G^\mathbb{C}/Q$ of the 
complexified Lie group $G^\mathbb{C}$. 
The definition of an open measurable $G$-orbit is given by Wolf in \cite{wolf}
as well as the following facts.	A base point $z_0\in Y$ may be chosen so that 
$Q=\mathrm{Stab}_{G^\mathbb{C}}(z_0)$, 
and $H=Q\cap \overline{Q}=\mathrm{Stab}_G(z_0)$ 
contains a fundamental Cartan $T$ 
subgroup of $G$. One 
may also assume that $\mathfrak{h}$ is the centralizer of a compact torus 
$\mathfrak{t}'\subset \mathfrak{t}\cap \mathfrak{k}$. 
So there exists $\xi_0\in \mathfrak{t}'$ such that $\mathrm{ad} \xi_0$ has
real eigenvalues, $\mathfrak{h}$ is the centralizer of $\xi_0$ and
$\mathfrak{u}$ is the sum of the eigenspaces of 
$\mathrm{ad}\xi_0$ corresponding to the positive eigenvalues, and
$\mathfrak{q}=\mathfrak{h}\oplus \mathfrak{u}$.
%
It can then be shown that the homogeneous
space $Y=G/H$ is isomorphic to a coadjoint elliptic orbit
and that all coadjoint elliptic orbit arise in this way.  

As an open submanifold of a complex manifold,
$Y$ is a complex manifold as well with a $G$-invariant complex structure.
With these assumptions the antiholomorphic tangent space at the origin
is $\mathfrak{u}$. 
Moreover $\mathfrak{u}$ and $\overline{\mathfrak{u}}$ are
isotropic subspaces in perfect duality for $B$. 
So we will
identify $\mathfrak{u}^*$ and 
$\overline{\mathfrak{u}}$ occasionally.
Let us now define a twisted Dolbeault
operator. It is a $G$-invariant differential operator
on the bundle $G\times_H \wedge^\bullet\mathfrak{u}^*\otimes E$
where $E$ is a smooth representation of $H$. 
Let $X_i$ be a basis of $\mathfrak{u}$, $\xi_i$ the dual 
basis. We note $r(X_i)$ the left invariant vector field on 
$G$ defined by right derivative and $e(\xi_i)$ 
(resp. $\iota(X_i)$)
the exterior product
(resp. interior product)
on $\wedge^\bullet\mathfrak{u}^*$.
The twisted Dolbeault operator is defined by
\begin{equation} \label{dolbeault}
 \dolbeault(E)=\sum_i r(X_i)\otimes e(\xi_i) \otimes I_E
 +\sum_{i<j} 1\otimes e(\xi_i)e(\xi_j)\iota([X_i,X_j])\otimes I_E
\end{equation}
It defines a differential complex on the bundle 
$G\times_H \wedge^\bullet\mathfrak{u}^*\otimes E$
whose cohomology $H_{\dolbeault(E)}$ is known as 
Dolbeault cohomology. 
Again when $E$ is any smooth representation of $H$,
the operator in equation~(\ref{dolbeault}) is well defined. 
H. W. Wong proved \cite{wong:95, wong:99} that the 
cohomology of the Dolbeault operator is a smooth admissible $G$-module
that is a maximal globalization of its underlying
Harish-Chandra module namely the cohomologically induced
Zuckerman module $\mathcal{R}^*(E)$. The main step is to prove that
the operator $\dolbeault(E)$ has closed range, 
which is a deep and difficult
result. Once this is done the results
on the cohomologically induced
Zuckerman module $\mathcal{R}^*(E)$ apply. In particular
Wong deduce that when $E$ has infinitesimal character
then the cohomology space
$H_{\dolbeault(E)}$ also has an infinitesimal
character. One  may also probably deduce the compatibility
of Dolbeault cohomology with the Zuckerman
translation functor from Wong's result. 
The main goal of this paper is to recover (resp. prove) these results 
on infinitesimal character and translations without the knowledge that 
the differential operator $\dolbeault(E)$ has the closed
range property.

More precisely we obtain the following results.
The representation of $G$
on $(C^\infty(G)\otimes \wedge^\bullet \mathfrak{u}^*\otimes E)^H$
commute with the Dolbeault operator. Hence its derivative
also commute with the Dolbeault operator and goes down
to a well defined representation of the enveloping algebra 
$\mathcal{U}(\mathfrak{g})$. Its restriction to the center
$\mathcal{Z}(\mathfrak{g})$ of $\mathcal{U}(\mathfrak{g})$
is given by the following theorem.

\begin{theorem} \label{mainthm}
Let $E$ be a smooth representation of $H$ which is
$\mathcal{Z}(\mathfrak{h})$ finite. Let us write
$E=\oplus_\mu E(\mu)$ for its decomposition in primary 
component such that $E(\mu)$ has generalized 
infinitesimal character $\chi_\mu$. Assume that each
$E(\mu)$ is a smooth $H$-module.
Then the $G$-module $H_{\dolbeault(E(\mu))}$ has a
generalized infinitesimal character with
Harish Chandra parameter $\mu+\rho(\mathfrak{u})$
and $H_{\dolbeault(E)}=\oplus_\mu H_{\dolbeault(E(\mu))}$.
\end{theorem}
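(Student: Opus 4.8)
The plan is to reduce the statement to an algebraic identity in $\mathcal{U}(\mathfrak{g})\otimes\mathrm{End}(\wedge^\bullet\mathfrak{u}^*)$, exactly as in the Huang--\pandzic proof of Vogan's conjecture for the cubic Dirac operator. As recalled just before the statement, the action of $\mathcal{Z}(\mathfrak{g})$ on $(C^\infty(G)\otimes\wedge^\bullet\mathfrak{u}^*\otimes E)^H$ coming from the derived $G$-representation commutes with $\dolbeault(E)$, hence descends to $H_{\dolbeault(E)}$; the content of the theorem is to compute this descended action. Put $\mathfrak{q}=\mathfrak{h}\oplus\mathfrak{u}$ and let $\zeta\colon\mathcal{Z}(\mathfrak{g})\to\mathcal{Z}(\mathfrak{h})$ be the Harish--Chandra homomorphism attached to the parabolic $\mathfrak{q}$, i.e.\ the composition of the Poincar\'e--Birkhoff--Witt projection $\mathcal{U}(\mathfrak{g})\to\mathcal{U}(\mathfrak{h})$ along $\overline{\mathfrak{u}}\,\mathcal{U}(\mathfrak{g})+\mathcal{U}(\mathfrak{g})\mathfrak{u}$ with the translation of $\mathcal{Z}(\mathfrak{h})$ by $\rho(\mathfrak{u})$. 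Using the Harish--Chandra isomorphisms $\mathcal{Z}(\mathfrak{g})\cong S(\mathfrak{t})^{W(\mathfrak{g})}$ and $\mathcal{Z}(\mathfrak{h})\cong S(\mathfrak{t})^{W(\mathfrak{h})}$ for the common fundamental Cartan $\mathfrak{t}$, together with the fact that $\rho(\mathfrak{u})$ is $W(\mathfrak{h})$-invariant (because $\mathfrak{u}$ is $\mathfrak{h}$-stable, so that the translation above is indeed an automorphism), the standard computation of this homomorphism gives $\chi_\mu\circ\zeta=\chi_{\mu+\rho(\mathfrak{u})}$ for every $\mathfrak{h}$-infinitesimal character $\chi_\mu$ with Harish--Chandra parameter $\mu$.

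The heart of the argument is to show that for each $z\in\mathcal{Z}(\mathfrak{g})$ there is an operator $S_z$ on the Dolbeault complex, $H$-equivariant so that it descends to sections, with
\begin{equation*}
 z-\zeta(z)=\dolbeault(E)\,S_z+S_z\,\dolbeault(E),
\end{equation*}
where $z$ denotes its action through the $\mathcal{U}(\mathfrak{g})$-structure on sections and $\zeta(z)$ its action through the $H$-module structure on $G\times_H(\wedge^\bullet\mathfrak{u}^*\otimes E)$. I would prove this by passing to the ``universal'' picture: replace the vector fields $r(X_i)$ by $X_i\in\mathcal{U}(\mathfrak{g})$ to get an operator $D$ on $\mathcal{U}(\mathfrak{g})\otimes\mathrm{End}(\wedge^\bullet\mathfrak{u}^*)$, so that the displayed identity becomes the assertion that the classes of $z\otimes1$ and of the image of $\zeta(z)$ agree in the cohomology of the graded commutator $[D,\,\cdot\,]$. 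Filtering by the order filtration of $\mathcal{U}(\mathfrak{g})$, the associated graded differential is the symbol of $[D,\,\cdot\,]$, a Koszul-type differential on $S(\mathfrak{g})\otimes\wedge^\bullet\mathfrak{u}^*$; computing its cohomology --- this is the Chevalley--Eilenberg/Koszul computation behind Kostant's results on $\mathfrak{u}$-cohomology, and where the Alekseev--Meinrenken Weil-algebra formalism enters --- identifies the relevant piece with $S(\mathfrak{h})$, and a spectral-sequence argument lifts this to the statement about $z\otimes1$. This is essentially the Casselman--Osborne theorem, and it is the step I expect to be the main obstacle; everything else is formal.

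Granting the identity, the theorem follows at once. The decomposition $E=\bigoplus_\mu E(\mu)$ gives a decomposition $\dolbeault(E)=\bigoplus_\mu\dolbeault(E(\mu))$ of complexes, each term being defined since each $E(\mu)$ is smooth, and cohomology commutes with direct sums, so $H_{\dolbeault(E)}=\bigoplus_\mu H_{\dolbeault(E(\mu))}$ (if there are infinitely many $\mu$ one checks this degreewise, as usual). Fix $\mu$ and $z\in\mathcal{Z}(\mathfrak{g})$. Since $E(\mu)$ has generalized infinitesimal character $\chi_\mu$, the central element $\zeta(z)\in\mathcal{Z}(\mathfrak{h})$ acts on $E(\mu)$ as the scalar $\chi_\mu(\zeta(z))=\chi_{\mu+\rho(\mathfrak{u})}(z)$ plus a nilpotent operator commuting with $\dolbeault(E(\mu))$, which therefore descends to a nilpotent operator on $H_{\dolbeault(E(\mu))}$. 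Passing to cohomology in the displayed identity kills $\dolbeault S_z+S_z\dolbeault$, so $z$ acts on $H_{\dolbeault(E(\mu))}$ as $\chi_{\mu+\rho(\mathfrak{u})}(z)$ plus a nilpotent operator; as $z$ was arbitrary in $\mathcal{Z}(\mathfrak{g})$, this says exactly that $H_{\dolbeault(E(\mu))}$ has generalized infinitesimal character with Harish--Chandra parameter $\mu+\rho(\mathfrak{u})$.
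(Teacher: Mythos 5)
Your strategy is the paper's own: show that in the algebra of abstract differential operators each $z\in\mathcal{Z}(\mathfrak{g})$ is cohomologous, for the differential $d_{\dolbeault}=[\dolbeault,\cdot\,]$, to an element of $\mathcal{Z}(\mathfrak{h})$, prove this by filtering $\mathcal{U}(\mathfrak{g})\otimes\mathrm{End}(\wedge^\bullet\overline{\mathfrak{u}})$ by order and identifying the graded differential with the Koszul differential along $\mathfrak{u}$ (this is theorem~\ref{cohomology-general} and its proof), then convert the resulting map $\mathcal{Z}(\mathfrak{g})\to\mathcal{Z}(\mathfrak{h})$ into Harish--Chandra parameters. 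However, as written your central displayed identity fails because of how you normalized $\zeta$. The element of $\mathcal{Z}(\mathfrak{h})$ cohomologous to $z$ is the \emph{plain} PBW projection $p(z)$ along $\overline{\mathfrak{u}}\,\mathcal{U}(\mathfrak{g})+\mathcal{U}(\mathfrak{g})\mathfrak{u}$ (lemma~\ref{zeta}), and it is unique, since $H_{d_{\dolbeault}}\simeq\delta_{\mathfrak{h}}(\mathcal{Z}(\mathfrak{h}))\simeq\mathcal{Z}(\mathfrak{h})$ and $\delta_{\mathfrak{h}}(\mathcal{Z}(\mathfrak{h}))\cap\mathrm{im}\,d_{\dolbeault}=0$. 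So for your $\zeta$, defined as $p$ composed with the translation by $\rho(\mathfrak{u})$, no homotopy $S_z$ with $z-\zeta(z)=\dolbeault(E)S_z+S_z\dolbeault(E)$ can exist unless the translation happens to fix $p(z)$. Symmetrically, the character identity $\chi_\mu\circ\zeta=\chi_{\mu+\rho(\mathfrak{u})}$ you invoke is the one satisfied by the \emph{untwisted} $p$: that is exactly where the $\rho(\mathfrak{u})$-shift comes from, and the paper establishes it by evaluating on $H^0(\mathfrak{u},F^\lambda)$ for finite-dimensional $F^\lambda$ plus a density argument (diagram~(\ref{harishchandra})). The $\rho(\mathfrak{u})$-translated homomorphism is precisely the normalization for which parameters correspond with no shift at all (or with a $2\rho(\mathfrak{u})$-shift if you translate in the other direction), so your two claims about $\zeta$ are mutually inconsistent; the two misstatements cancel and produce the correct final parameter, but the argument as stated does not hold together. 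The fix is simply to take $\zeta=p$ with no translation, after which your outline coincides with the paper's proof.

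Two further points that your sketch of the ``heart'' leaves implicit, and which is where the real work lies. First, after passing to the associated graded one works in $S(\mathfrak{g})\otimes\wedge^\bullet\mathfrak{h}^\perp$ (note $\mathrm{End}(\wedge^\bullet\overline{\mathfrak{u}})\simeq\wedge^\bullet\overline{\mathfrak{u}}\otimes\wedge^\bullet\mathfrak{u}$, not $\wedge^\bullet\mathfrak{u}^*$), and the cohomology of the Koszul differential $1\otimes\partial_{\mathfrak{u}}$ is $S(\mathfrak{h})\otimes S(\overline{\mathfrak{u}})\otimes\wedge^\bullet\overline{\mathfrak{u}}$, which is much bigger than $S(\mathfrak{h})$; it is only after taking $H$-invariants, using that $\mathfrak{h}$ is the centralizer of $\xi_0$ whose adjoint action is positive on $\mathfrak{u}$ and negative on $\overline{\mathfrak{u}}$, that one cuts down to $S(\mathfrak{h})^H$ (lemma~\ref{koszul}); this is what makes the induction on the order (your ``spectral-sequence argument'') go through. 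Second, the derived $G$-action on sections is $l(Z)=r(Z^\sharp)$, while the homotopy identity lives on the right-derivative side; to conclude that $l(Z)$ acts on cohomology through $1\otimes I_{\wedge\overline{\mathfrak{u}}}\otimes\sigma(\zeta_{\dolbeault}(Z))$ one needs the compatibility $\zeta_{\dolbeault}(Z^\sharp)=\zeta_{\dolbeault}(Z)^\sharp$, which is immediate once $\zeta_{\dolbeault}=p$ but is invisible in your write-up. Your final formal steps (the splitting $\dolbeault(E)=\oplus_\mu\dolbeault(E(\mu))$ and the scalar-plus-nilpotent argument descending to cohomology) are fine and agree with the paper.
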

Let $E(\mu)$ be a smooth representation of $H$ with generalized 
infinitesimal character $\chi_\mu$. 
Let $F^\nu$ be a finite dimensional irreducible 
representation of $G$ with highest weight $\nu$.
Its restriction to the connected group $H$ has a 
subrepresentation isomorphic to the representation $E^\nu$.
Hence we get a homomorphism of smooth representations of $H$
$$
p^H_{\mu,\nu} \colon E(\mu) \otimes F^\nu 
\to E(\mu)\otimes E^\nu\,.
$$
This projection induces a $G$-equivariant projection
$$
p^G_{\mu,\nu}\colon
C^\infty(G,\wedge^\bullet {\mathfrak{u}}^*
\otimes E(\mu))^H
\otimes F^\nu \to 
C^\infty(G,\wedge^\bullet {\mathfrak{u}}^*
\otimes E(\mu)\otimes E^\nu))^H
$$ 

\begin{theorem} \label{mainthm-2}
The map $p^G_{\mu,\nu}$
that goes down to cohomology
$$ 
p^G_{\mu,\nu}\colon H_{\dolbeault(E(\mu))}\otimes F^\nu
\to 
H_{\dolbeault(E(\mu)\otimes E^\nu))}\,.
$$
\end{theorem}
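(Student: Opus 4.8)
The plan is to realise $p^{G}_{\mu,\nu}$ as the composition of a ``tensoring identity'' isomorphism with a fibrewise projection, and then to check that the composition is a morphism of complexes for $\dolbeault$; once that is done it carries $\dolbeault$-cocycles to cocycles and $\dolbeault$-coboundaries to coboundaries, hence descends to cohomology.

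First I would make $p^{G}_{\mu,\nu}$ explicit. Since $F^{\nu}$ is finite dimensional, the classical tensoring identity for sections of an induced bundle twisted by a finite dimensional representation provides a $G$-equivariant isomorphism
$$
\Theta\colon
C^{\infty}(G,\wedge^{\bullet}\mathfrak{u}^{*}\otimes E(\mu))^{H}\otimes F^{\nu}
\ \xrightarrow{\ \sim\ }\
C^{\infty}(G,\wedge^{\bullet}\mathfrak{u}^{*}\otimes E(\mu)\otimes F^{\nu})^{H},
\qquad
\Theta(\phi\otimes v)(g)=\phi(g)\otimes g^{-1}v ,
$$
and by construction $p^{G}_{\mu,\nu}$ is $\Theta$ followed by the fibrewise application of $p^{H}_{\mu,\nu}=\mathrm{id}_{E(\mu)}\otimes p^{\nu}$, where $p^{\nu}\colon F^{\nu}\to E^{\nu}$ is the projection onto the $H$-submodule $E^{\nu}$ of $F^{\nu}|_{H}$. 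Equipping the source with the differential $\dolbeault(E(\mu))\otimes I_{F^{\nu}}$, it suffices to show that both $\Theta$ and the fibrewise $p^{H}_{\mu,\nu}$ are chain maps for the relevant Dolbeault differentials.

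Secondly I would compute how $\Theta$ transports $\dolbeault(E(\mu))\otimes I_{F^{\nu}}$. The second sum in~(\ref{dolbeault}) only involves $\wedge^{\bullet}\mathfrak{u}^{*}$ and the identity of the coefficient module, hence is untouched by $\Theta$; in the first sum one applies the Leibniz rule to $r(X_{i})$ together with the identity $\frac{d}{dt}\big|_{0}(g\exp tX_{i})^{-1}v=-X_{i}\cdot(g^{-1}v)$, where $X_{i}\in\mathfrak{u}$ acts on $F^{\nu}$ through its $G$-module structure. This produces precisely one extra term, so that $\Theta$ conjugates $\dolbeault(E(\mu))\otimes I_{F^{\nu}}$ into $\dolbeault(E(\mu)\otimes F^{\nu})+C$, where $C=\sum_{i}1\otimes e(\xi_{i})\otimes I_{E(\mu)}\otimes X_{i}$ (again $X_{i}\in\mathfrak{u}$ acting on the $F^{\nu}$-factor). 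Conceptually, $\Theta$ identifies the source with the Dolbeault complex of the bundle attached to the $\mathfrak{q}$-module $E(\mu)\otimes F^{\nu}$ in which $\mathfrak{u}$ acts only through $F^{\nu}$: the summand $\dolbeault(E(\mu)\otimes F^{\nu})$ is the operator~(\ref{dolbeault}) built with $\mathfrak{u}$ acting trivially on the coefficients, and $C$ is the correction coming from the genuine action of $\mathfrak{u}$ on $F^{\nu}$.

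Finally I would use the definition of $E^{\nu}$ to annihilate $C$. The module $E^{\nu}$ is (a copy inside $F^{\nu}|_{H}$ of) the zeroth $\mathfrak{u}$-homology $H_{0}(\mathfrak{u},F^{\nu})=F^{\nu}/\mathfrak{u}F^{\nu}$; by Kostant's theorem this is an irreducible $H$-module occurring in $F^{\nu}|_{H}$ with multiplicity one, so the projection $p^{\nu}\colon F^{\nu}\to E^{\nu}$ is canonically defined and has kernel exactly $\mathfrak{u}F^{\nu}$. In particular $p^{\nu}\circ X_{i}=0$ on $F^{\nu}$ for every $X_{i}\in\mathfrak{u}$, so the fibrewise $p^{H}_{\mu,\nu}$ kills $C$; and since $p^{\nu}$ is $H$-equivariant and commutes with all the operations on $\wedge^{\bullet}\mathfrak{u}^{*}$ and on $C^{\infty}(G)$ appearing in~(\ref{dolbeault}), it intertwines $\dolbeault(E(\mu)\otimes F^{\nu})$ with $\dolbeault(E(\mu)\otimes E^{\nu})$. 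Combined with the previous step, $p^{G}_{\mu,\nu}$ --- being the fibrewise $p^{H}_{\mu,\nu}$ composed with $\Theta$ --- therefore intertwines $\dolbeault(E(\mu))\otimes I_{F^{\nu}}$ with $\dolbeault(E(\mu)\otimes E^{\nu})$ and passes to cohomology. The point that is not routine is precisely the identity $\ker p^{\nu}=\mathfrak{u}F^{\nu}$, which destroys the correction term after projecting $F^{\nu}$ onto $E^{\nu}$; equivalently, $p^{H}_{\mu,\nu}$ is a morphism of $\mathfrak{q}$-modules from $E(\mu)\otimes F^{\nu}$ with $\mathfrak{u}$ acting through the $F^{\nu}$-factor to $E(\mu)\otimes E^{\nu}$ with $\mathfrak{u}$ acting trivially, and the Dolbeault complex is functorial in the $\mathfrak{q}$-module variable --- everything else being the tensoring identity and Leibniz-rule bookkeeping.
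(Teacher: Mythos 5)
Your route is the same as the paper's: the tensoring identity $\Theta$ (the paper's $\alpha$), a Leibniz computation showing that $\Theta$ conjugates $\dolbeault(E(\mu))\otimes I_{F^\nu}$ into $\dolbeault(E(\mu)\otimes F^\nu)$ plus a zeroth--order correction built from the nilradical acting on the $F^\nu$--factor (the paper's identity $\Delta_{F^\nu}(\dolbeault(E(\mu)))=\dolbeault(E(\mu)\otimes F^\nu)+1\otimes\hat{\dolbeault}_{F^\nu}\otimes I_{E(\mu)}$), and then the observation that the fibrewise projection $p^{H}_{\mu,\nu}$ kills the correction and intertwines the two Dolbeault operators by functoriality in the coefficient module. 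So the architecture is fine; the problem is in the one step you yourself single out as non--routine.

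You justify the vanishing of the correction by the identity $\ker p^{\nu}=\mathfrak{u}F^{\nu}$, obtained by identifying $E^{\nu}$ with $H_0(\mathfrak{u},F^{\nu})=F^{\nu}/\mathfrak{u}F^{\nu}$ ``by Kostant''. Under the dominance convention in force elsewhere in the paper (and needed for the translation statement and for $H^0(\mathfrak{u},F^{\lambda})=E^{\lambda}$), $\nu$ is dominant for a positive system containing the roots of $\mathfrak{u}$ and $E^{\nu}$ is the $H$--type generated by the $G$--highest weight vector; then $F^{\nu}/\mathfrak{u}F^{\nu}$ is \emph{not} $E^{\nu}$ but the constituent with extreme weight on the opposite (lowest) side, while it is $F^{\nu}/\overline{\mathfrak{u}}F^{\nu}$ that is isomorphic to $E^{\nu}$. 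Consequently $p^{\nu}$ annihilates $\overline{\mathfrak{u}}F^{\nu}$ but not $\mathfrak{u}F^{\nu}$: already for $\mathfrak{g}=\mathfrak{sl}_2$, $\mathfrak{h}=\mathfrak{t}$, $\mathfrak{u}=\mathbb{C}e$, one has $eF^{\nu}\supset$ the highest weight line, so $p^{\nu}\circ e\neq 0$ whereas $p^{\nu}\circ f=0$. With your correction term $C=\sum_i 1\otimes e(\xi_i)\otimes I_{E(\mu)}\otimes X_i$, $X_i\in\mathfrak{u}$, the composition $p^{H}_{\mu,\nu}\circ C$ therefore does not vanish, and the chain--map property fails as written. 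What the paper actually uses is the opposite pairing: its proposition shows the range of $1\otimes\hat{\dolbeault}_{F^\nu}\otimes I_{E(\mu)}$ lies in $\Gamma(\wedge^\bullet\overline{\mathfrak{u}}\otimes E(\mu)\otimes E')$ with $E'\supset\overline{\mathfrak{u}}F^{\nu}$, precisely because $F^{\nu}/\overline{\mathfrak{u}}F^{\nu}$ is the highest weight $\nu$ module (note also that only the containment $\overline{\mathfrak{u}}F^{\nu}\subset\ker p^{\nu}$ is needed, not your equality). To repair your argument you must make the $\mathfrak{u}$--versus--$\overline{\mathfrak{u}}$ bookkeeping consistent: the nilradical acting on $F^{\nu}$ in the Leibniz correction has to be the one whose coinvariants give $E^{\nu}$ (i.e.\ the one opposite to the dominance of $\nu$), and this compatibility has to be checked against the conventions fixing the antiholomorphic directions in equation~(\ref{dolbeault}); it cannot be asserted as $\ker p^{\nu}=\mathfrak{u}F^{\nu}$, which is false in the stated setting.
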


Thanks to theorem \ref{mainthm} and again the above 
theorem of Kostant, the representation 
$H_{\dolbeault(E(\mu))}\otimes F^\nu$ is a $\mathcal{Z}(\mathfrak{g})$
finite $\mathcal{U}(\mathfrak{g})$ module. We look at its primary 
component with generalized infinitesimal character 
$\chi_{\mu+\nu+\rho(\mathfrak{u})}$.

\begin{theorem} \label{mainthm-3}
Assume that 
\begin{equation*}\tag{C}
\mu + \rho(\mathfrak{u})+\nu 
\text{ is at least singular as }
\mu + \rho(\mathfrak{u})\,.
\end{equation*}
The restriction of $p^G_{\mu,\nu}$ to the 
$\mathcal{Z}(\mathfrak{g})$-primary component of
$H_{\dolbeault(E(\mu))}\otimes F^\nu$ with 
Harish-Chandra parameter $\mu+\rho(\mathfrak{u})+\nu$
is an isomorphism onto the 
$\mathcal{Z}(\mathfrak{g})$-primary component of
$H_{\dolbeault(E(\mu)\otimes E^\nu))}$ with 
Harish-Chandra parameter $\mu+\rho(\mathfrak{u})+\nu$.
\end{theorem}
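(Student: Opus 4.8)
The plan is to reduce the statement to a comparison of $\mathcal{Z}(\mathfrak{g})$-primary components through the $G$-equivariant map $p^G_{\mu,\nu}$ of Theorem \ref{mainthm-2}, exploiting that this map is, fiberwise, the orthogonal projection coming from the $H$-decomposition $F^\nu|_H \supset E^\nu$. First I would fix notation: write $\lambda=\mu+\rho(\mathfrak{u})$ for the Harish-Chandra parameter attached to $E(\mu)$ by Theorem \ref{mainthm}, so that $H_{\dolbeault(E(\mu))}$ has generalized infinitesimal character $\chi_\lambda$, and $H_{\dolbeault(E(\mu)\otimes E^\nu)}$ has generalized infinitesimal character $\chi_{\lambda+\nu}$ (applying Theorem \ref{mainthm} to the $H$-module $E(\mu)\otimes E^\nu$, whose infinitesimal character is $\chi_{\mu+\nu}$ by the standard shift for tensoring with a finite-dimensional $H$-module of extreme weight $\nu$). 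The tensor product $H_{\dolbeault(E(\mu))}\otimes F^\nu$ is $\mathcal{Z}(\mathfrak{g})$-finite by the Casselman-Osborne / Kostant result quoted before the theorem, and its generalized infinitesimal characters lie among $\chi_{\lambda+\nu'}$ as $\nu'$ ranges over the weights of $F^\nu$; let $P_{\lambda+\nu}$ denote the projection onto the $\chi_{\lambda+\nu}$-primary summand.

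The core of the argument is then to show that $p^G_{\mu,\nu}$ restricted to $P_{\lambda+\nu}\bigl(H_{\dolbeault(E(\mu))}\otimes F^\nu\bigr)$ is both injective and surjective onto $H_{\dolbeault(E(\mu)\otimes E^\nu)}$. For this I would pass to the underlying Harish-Chandra modules, where by Wong's theorem $H_{\dolbeault(E(\mu))}$ is the maximal globalization of the Zuckerman module $\mathcal{R}^\bullet(E(\mu))$; then the statement becomes the classical fact (the defining property of the Jantzen-Zuckerman translation functor $\psi_\lambda^{\lambda+\nu}$) that, under hypothesis (C), $\psi_\lambda^{\lambda+\nu}\mathcal{R}^\bullet(E(\mu)) \cong \mathcal{R}^\bullet(\psi^H_{\mu}(E(\mu)\otimes F^\nu))=\mathcal{R}^\bullet(E(\mu)\otimes E^\nu)$, the commutation of cohomological induction with translation (Knapp-Vogan). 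The condition (C), that $\lambda+\nu$ is no more singular than $\lambda$, is exactly what guarantees that translation to the $(\lambda+\nu)$-wall does not kill the module and is exact in the relevant sense. Concretely, $p^G_{\mu,\nu}$ is the map on cohomology induced by the $H$-projection $p^H_{\mu,\nu}$, and at the level of the Dolbeault complexes one checks that $p^H_{\mu,\nu}$ intertwines $\dolbeault(E(\mu))\otimes I_{F^\nu}$ with $\dolbeault(E(\mu)\otimes E^\nu)$ — which is Theorem \ref{mainthm-2} — so the only remaining point is that on the $\chi_{\lambda+\nu}$-isotypic part no homology is lost or gained, i.e. the inclusion of complexes $\mathrm{pr}_{\chi_{\mu+\nu}}(\wedge^\bullet\mathfrak{u}^*\otimes E(\mu)\otimes F^\nu) \hookrightarrow \wedge^\bullet\mathfrak{u}^*\otimes E(\mu)\otimes E^\nu$ induced by $E^\nu\subset F^\nu|_H$ is a quasi-isomorphism after applying the $\chi_{\lambda+\nu}$-projection.

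I would establish that quasi-isomorphism by a spectral-sequence or direct filtration argument on $F^\nu|_H$: decompose $F^\nu|_H$ into $H$-isotypic (more precisely, $\mathcal{Z}(\mathfrak{h})$-primary) pieces $F^\nu|_H=\bigoplus_\sigma E^\nu_\sigma$ with $E^\nu_{\nu}=E^\nu$; by Theorem \ref{mainthm} applied summand-by-summand, $H_{\dolbeault(E(\mu)\otimes E^\nu_\sigma)}$ has generalized infinitesimal character $\chi_{\mu+\sigma'+\rho(\mathfrak{u})}$ where $\sigma'$ is the infinitesimal character datum of $E^\nu_\sigma$, and under (C) the only summand contributing to the $\chi_{\lambda+\nu}$-primary part on the target side is $\sigma=\nu$, while on the source side one invokes that translation functors applied to a module with infinitesimal character $\chi_\lambda$ and composed with tensoring by $F^\nu$ pick out precisely the extreme piece — this is the standard "$\psi$ followed by $\varphi$" computation, whose validity under (C) is Vogan's criterion that crossing to a wall that is "no worse" than the original is harmless. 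The main obstacle I anticipate is purely this last analytic/representation-theoretic bookkeeping at the wall: making precise, in the maximal-globalization category rather than for Harish-Chandra modules, that the $\mathcal{Z}(\mathfrak{g})$-primary projection commutes with taking $\dolbeault$-cohomology and that hypothesis (C) suffices to prevent the translation from being zero — here I would lean on the fact, already used implicitly via Theorem \ref{mainthm}, that the $G$-representation on the Dolbeault complex is admissible with the stated infinitesimal characters, so that $\mathcal{Z}(\mathfrak{g})$ acts locally finitely and the primary decomposition is a finite direct sum of closed $G$-submodules on which everything in sight is $G$-equivariant and hence reduces to the Harish-Chandra module statement by Wong's theorem.
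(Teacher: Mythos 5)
There is a genuine gap, and it sits exactly where you wave at ``a spectral-sequence or direct filtration argument on $F^\nu|_H$''. After transporting the differential through $\alpha$, the operator acting on $C^\infty(G,\wedge^\bullet\overline{\mathfrak{u}}\otimes E(\mu)\otimes F^\nu)^H$ is $\Delta_{F^\nu}(\dolbeault(E(\mu)))=\dolbeault(E(\mu)\otimes F^\nu)+1\otimes\hat{\dolbeault}_{F^\nu}\otimes I_{E(\mu)}$, and the second term does \emph{not} respect the decomposition of $F^\nu|_H$ into $\mathcal{Z}(\mathfrak{h})$-primary pieces: it only maps the $E^\nu$-block into the $E'$-block. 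So the source complex is not a direct sum of complexes $\dolbeault(E(\mu)\otimes E^\nu_\sigma)$, Theorem \ref{mainthm} cannot be applied ``summand-by-summand'' to it, and what one actually gets is only the two-step filtration by $\Gamma(\wedge^\bullet\overline{\mathfrak{u}}\otimes E(\mu)\otimes E')$, hence a long exact sequence with a connecting map (equations \eqref{factor}--\eqref{connecting} in the paper). The crux of the theorem is to control the $\mathcal{Z}(\mathfrak{g})$-action on the cohomology of that complementary subcomplex, whose differential is \emph{not} of the form $\dolbeault(\cdot)$ for an $H$-module, so none of the earlier results apply to it directly; the paper has to prove the separate technical Lemma \ref{mainlemma} (via a filtration/spectral sequence on invariant differential operators, showing $z\in\mathcal{Z}(\mathfrak{g})$ acts through $p(z)$ on $H(\mathfrak{u},E')$) and then combine it with \cite[Prop.\ 7.166]{KV} and condition \eqref{condition} to exclude $\chi_{\mu+\rho(\mathfrak{u})+\nu}$ from that piece and kill the connecting map. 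Your sketch contains no substitute for this step.

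Your fallback — pass to underlying Harish-Chandra modules via Wong's maximal-globalization theorem and quote the Knapp--Vogan commutation of cohomological induction with translation — has two problems. First, it is against the explicit purpose of the paper, which is to obtain these statements \emph{without} the closed-range property and to give an independent proof of \cite[Thm.\ 7.237]{KV}; so within this framework the argument is circular (as a citation of external results it is not logically false, but it proves a different, weaker thing than the paper intends). Second, and more seriously for correctness, even granting Wong plus Knapp--Vogan you only obtain an \emph{abstract} isomorphism between the two $\chi_{\mu+\rho(\mathfrak{u})+\nu}$-primary components; the theorem asserts that the specific geometric map $p^G_{\mu,\nu}$ (fiberwise projection, composed with $\alpha$) induces this isomorphism, and knowing that the source and target are isomorphic does not show that a given $G$-map between them is injective or surjective. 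Identifying the geometric map with the algebraic translation isomorphism is precisely the content that must be proved, and your proposal defers it to ``standard $\psi$ followed by $\varphi$'' bookkeeping without carrying it out at the level of the smooth complexes where the map is defined.
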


Let $\Psi$ be the Zuckerman translation functor.
By a theorem of Kostant the smooth representation
$E(\mu)\otimes E^\nu$ of the connected group $H$ is seen
to have a subrepresentation $E(\mu+\nu)$ with generalized
infinitesimal character $\chi_{\mu+\nu}$ and such that
$E(\mu+\nu)=\Psi_{\mu}^{\mu+\nu}(E(\mu))$.
Thanks to theorem \ref{mainthm} we deduce that
$\Psi_{\mu+\rho(\mathfrak{u})}^{\mu+\rho(\mathfrak{u})+\nu}
\left(H_{\dolbeault(E)}\right)
$ is the $\mathcal{Z}(\mathfrak{g})$-primary component of
$H_{\dolbeault(E(\mu))}\otimes F^\nu$ with 
Harish-Chandra parameter $\mu+\rho(\mathfrak{u})+\nu$.
Moreover thanks to theorem \ref{mainthm} again 
(along with condition \eqref{condition} also)
the fact that $H_{\dolbeault(E(\mu+\nu))}$ is the $\mathcal{Z}(\mathfrak{g})$
primary component of $H_{\dolbeault(E(\mu)\otimes E^\nu)}$ with
generalized infinitesimal character $ \chi_{\mu+\rho(\mathfrak{u})+\nu}$.
So theorem \ref{mainthm-3} reads as follows.
\begin{theorem}
Under condition \eqref{condition} the map 
$p_{\mu,\nu}^G\circ\alpha$ induces a $G$-isomorphism
$$
\Psi_{\mu+\rho(\mathfrak{u})}^{\mu+\rho(\mathfrak{u})+\nu}(H_{\dolbeault(E)})
\simeq
H_{\dolbeault(\Psi_{\mu}^{\mu+\nu}(E))}\,.
$$
\end{theorem}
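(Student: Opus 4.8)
The plan is to read the statement as a bookkeeping reformulation of Theorem~\ref{mainthm-3}: once both sides of the claimed isomorphism are identified with the relevant $\mathcal{Z}(\mathfrak{g})$-primary components, the map $p^G_{\mu,\nu}\circ\alpha$ is literally the isomorphism produced there, $\alpha$ being the canonical inclusion of a primary component. So the real content of the proof is to set up the two identifications and check that they fit together.

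First I would unwind the definition of the Zuckerman translation functor: for a $\mathcal{Z}(\mathfrak{g})$-finite $\mathcal{U}(\mathfrak{g})$-module $M$, the module $\Psi_{a}^{a+\nu}(M)$ is obtained by taking the $\chi_a$-primary component of $M$, tensoring it with the finite-dimensional irreducible $F^\nu$, and keeping the $\chi_{a+\nu}$-primary component of the result. Applying Theorem~\ref{mainthm} to $E=\bigoplus_\mu E(\mu)$ gives $H_{\dolbeault(E)}=\bigoplus_\mu H_{\dolbeault(E(\mu))}$ with $H_{\dolbeault(E(\mu))}$ of generalized infinitesimal character $\chi_{\mu+\rho(\mathfrak{u})}$, so the $\chi_{\mu+\rho(\mathfrak{u})}$-primary component of $H_{\dolbeault(E)}$ is exactly $H_{\dolbeault(E(\mu))}$; the module $H_{\dolbeault(E(\mu))}\otimes F^\nu$ is $\mathcal{Z}(\mathfrak{g})$-finite by Theorem~\ref{mainthm} and the theorem of Kostant, and by construction $\Psi_{\mu+\rho(\mathfrak{u})}^{\mu+\rho(\mathfrak{u})+\nu}(H_{\dolbeault(E)})$ is its $\chi_{\mu+\rho(\mathfrak{u})+\nu}$-primary component. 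I would let $\alpha$ denote the inclusion of that primary component into $H_{\dolbeault(E(\mu))}\otimes F^\nu$, which is the source of $p^G_{\mu,\nu}$.

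Next I would identify the target. By the theorem of Kostant the $H$-module $E(\mu)\otimes E^\nu$ contains the subrepresentation $E(\mu+\nu)=\Psi_\mu^{\mu+\nu}(E(\mu))=\Psi_\mu^{\mu+\nu}(E)$ with generalized infinitesimal character $\chi_{\mu+\nu}$. Writing $E(\mu)\otimes E^\nu=\bigoplus_{\mu'}E(\mu')$ for its primary decomposition, Theorem~\ref{mainthm} gives $H_{\dolbeault(E(\mu)\otimes E^\nu)}=\bigoplus_{\mu'}H_{\dolbeault(E(\mu'))}$ with $H_{\dolbeault(E(\mu'))}$ of generalized infinitesimal character $\chi_{\mu'+\rho(\mathfrak{u})}$. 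Here condition~\eqref{condition} is exactly what enters, just as in the paragraph preceding the statement: it rules out any $\mu'\neq\mu+\nu$ for which $\mu'+\rho(\mathfrak{u})$ is Weyl-conjugate to $\mu+\rho(\mathfrak{u})+\nu$, since the difference of these two weights is then a nonzero sum of positive roots and the comparison of singularities in \eqref{condition} obstructs the conjugacy. Hence the $\chi_{\mu+\rho(\mathfrak{u})+\nu}$-primary component of $H_{\dolbeault(E(\mu)\otimes E^\nu)}$ is precisely $H_{\dolbeault(E(\mu+\nu))}=H_{\dolbeault(\Psi_\mu^{\mu+\nu}(E))}$.

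Finally, Theorem~\ref{mainthm-3} says that $p^G_{\mu,\nu}$ restricts to an isomorphism of the $\chi_{\mu+\rho(\mathfrak{u})+\nu}$-primary component of $H_{\dolbeault(E(\mu))}\otimes F^\nu$ onto the $\chi_{\mu+\rho(\mathfrak{u})+\nu}$-primary component of $H_{\dolbeault(E(\mu)\otimes E^\nu)}$. Precomposing with $\alpha$ and using the two identifications of the previous two paragraphs yields the asserted $G$-isomorphism $\Psi_{\mu+\rho(\mathfrak{u})}^{\mu+\rho(\mathfrak{u})+\nu}(H_{\dolbeault(E)})\simeq H_{\dolbeault(\Psi_\mu^{\mu+\nu}(E))}$. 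I expect the only genuinely delicate point to be the appeal to condition~\eqref{condition} when pinning down the right-hand primary component; everything else is a direct translation through the definitions and the already-established Theorems~\ref{mainthm} and~\ref{mainthm-3}.
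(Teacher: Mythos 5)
Your proposal is correct and is essentially the paper's own argument: the paper likewise proves this theorem by identifying $\Psi_{\mu+\rho(\mathfrak{u})}^{\mu+\rho(\mathfrak{u})+\nu}(H_{\dolbeault(E)})$ with the $\chi_{\mu+\rho(\mathfrak{u})+\nu}$-primary component of $H_{\dolbeault(E(\mu))}\otimes F^\nu$ via Theorem \ref{mainthm} and the definition of the translation functor, identifying $H_{\dolbeault(\Psi_\mu^{\mu+\nu}(E))}$ with the corresponding primary component of $H_{\dolbeault(E(\mu)\otimes E^\nu)}$ via Kostant's theorem, Theorem \ref{mainthm} and condition \eqref{condition}, and then invoking Theorem \ref{mainthm-3}. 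The only slip is notational: in the paper $\alpha$ denotes the cochain-level isomorphism $C^\infty(G,\wedge^\bullet\mathfrak{u}^*\otimes E(\mu))^H\otimes F^\nu\simeq C^\infty(G,\wedge^\bullet\mathfrak{u}^*\otimes E(\mu)\otimes F^\nu)^H$ of Proposition \ref{alpha}, not the inclusion of a primary component, but this does not affect your argument.
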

 
This theorem is the geometric analogue of the corresponding theorem on cohomological
induction. A precise statement is given 
in \cite{KV} as theorem 7.237. The proofs we will exlpain here apply
as well if one replaces the space
of sections 
$C^\infty(G,\wedge^\bullet\mathfrak{u}^*\otimes E)^H$ 
by the space 
$\mathrm{Hom}_H(\mathcal{U}(\mathfrak{g})\otimes \wedge^\bullet \mathfrak{u},\,E)$. 
The abstract Dolbeault operator still 
defines on this space a differential complex whose
cohomology is precisely the cohomologically induced
module $\mathcal{R}^\bullet(E)$. So as we claimed in the abstract
we will prove theorem 7.237 in \cite{KV} as well.

The author want to thank  Leticia Barchini, Michel Duflo, Salah Mehdi, Martin Olbrich, Pavle \pandzic and David Vogan
for their interest and useful discussions or comments.

\section{Differential operators on homogeneous spaces}

Let $\sharp \colon \mathcal{U}(\mathfrak{h}) \to \mathcal{U}(\mathfrak{h})$ be the antipode.
This is the antiautomorphism of $\mathcal{U}(\mathfrak{h})$ given
by $X^\sharp=-X$ on $\mathfrak{h}$. Hence for $X_1,\ldots,X_n \in \mathfrak{h}$,
$$ (X_1\cdots X_n)^\sharp = (-1)^nX_n\cdots X_1\,.$$
We consider $\mathcal{U}(\mathfrak{g})$ as a right
$\mathcal{U}(\mathfrak{h})$-module and 
the space $\mathrm{End}(V)$ of continuous linear endomorphisms of 
the smooth $H$-module $V$ 
as the left $\mathcal{U}(\mathfrak{h})$-module given by
$$
(\forall h\in \mathcal{U}(\mathfrak{h}),\, 
T\in \mathrm{End}(V))\qquad 
h\cdot T =T\circ h^\sharp\,.
$$
Let $J$ be the left ideal of $\left(
\mathcal{U}(\mathfrak{g})\otimes \mathrm{End}(V)\right)$ 
generated by elements of the form
$$
 u\cdot h\otimes T - u\otimes h\cdot T
 \qquad
\left(u\in\mathcal{U}(\mathfrak{g})\,
,h\in\mathcal{U}(\mathfrak{h})\,
,T\in\mathrm{End}(V)\right)
$$
Then the amalgamated tensor product over 
$\mathcal{U}(\mathfrak{h})$ is given by
$$ 
\mathcal{U}(\mathfrak{g})
\otimes_{\mathcal{U}(\mathfrak{h})} 
\mathrm{End}(V)
= 
\left(
\mathcal{U}(\mathfrak{g})
\otimes \mathrm{End}(V)
\right)/J
$$
Let $q$ be the quotient map.
The action of $H$ on the tensor product leaves $J$
stable, and hence induces an action of $H$ on the quotient.
The invariant space for this action is the image of the $H$-invariants
in the tensor product under $q$ so that
$$ 
q\colon 
\left(
\mathcal{U}(\mathfrak{g})
\otimes 
\mathrm{End}(V)
\right)^H \to
\left(\mathcal{U}(\mathfrak{g})
\otimes_{\mathcal{U}(\mathfrak{h})} 
\mathrm{End}(V)\right)^H
$$
First note the following well-known lemma.
\begin{lemma} \label{generators}
1. The left $\mathcal{U}(\mathfrak{g})\otimes 
\mathrm{End}(V)$-ideal $J$ is generated by
the element of the form 
$$
Y\otimes I + 1 \otimes \tau(Y)
\qquad (Y\in\mathfrak{h})\,.
$$
2. The module $J^H$ of $H$-invariant elements in $J$ 
is a two-sided ideal of the algebra
$\left(\mathcal{U}(\mathfrak{g})\otimes 
\mathrm{End}(V)\right)^H$.
\end{lemma}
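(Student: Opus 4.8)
The plan is to first replace the defining generators of $J$ by a much smaller and more convenient generating set, and then to exploit it. Set $B=\mathcal{U}(\mathfrak{g})\otimes\mathrm{End}(V)$ and, for $Y\in\mathfrak{h}$, abbreviate $\delta(Y)=Y\otimes I+1\otimes\tau(Y)\in B$; let $J'$ be the left ideal of $B$ generated by the elements $\delta(Y)$, $Y\in\mathfrak{h}$. Since $Y^\sharp=-Y$ acts on $V$ through $\tau$, one has $Y\cdot I=I\circ Y^\sharp=-\tau(Y)$, so $\delta(Y)$ is the defining generator of $J$ obtained by taking $u=1$, $h=Y$, $T=I$; hence $J'\subseteq J$. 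For the reverse inclusion I will show that every defining generator $u\cdot h\otimes T-u\otimes h\cdot T$ (with $u\in\mathcal U(\mathfrak g)$, $h\in\mathcal U(\mathfrak h)$, $T\in\mathrm{End}(V)$) lies in $J'$. The base case $h=Y\in\mathfrak h$ is immediate: there $Y\cdot T=T\circ Y^\sharp=-T\tau(Y)$, so the generator equals $uY\otimes T+u\otimes T\tau(Y)=(u\otimes T)\,\delta(Y)\in J'$.

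For general $h$ I would argue by a PBW reduction and induction. Since $h\mapsto u\cdot h\otimes T-u\otimes h\cdot T$ is linear in $h$, it suffices to treat monomials $h=Y_1\cdots Y_n$, which I do by induction on $n$, the cases $n\le 1$ being done. Writing $h'=Y_2\cdots Y_n$ and using $u\cdot h=(uY_1)\cdot h'$ together with $h\cdot T=Y_1\cdot(h'\cdot T)$ (the latter from the left-module axiom, valid because $\sharp$ is an anti-automorphism), one has the telescoping identity
\[
u\cdot h\otimes T-u\otimes h\cdot T
=\bigl[(uY_1)\cdot h'\otimes T-(uY_1)\otimes h'\cdot T\bigr]
+\bigl[uY_1\otimes h'\cdot T-u\otimes Y_1\cdot(h'\cdot T)\bigr].
\]
The first bracket is the defining generator attached to $(uY_1,h',T)$, hence in $J'$ by the inductive hypothesis; the second is the defining generator attached to $(u,Y_1,h'\cdot T)$, hence in $J'$ by the base case. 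Thus every defining generator of $J$ lies in $J'$, so $J\subseteq J'$ and part 1 is proved.

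For part 2, part 1 lets me write any $j\in J$ as a finite sum $j=\sum_k b_k\,\delta(Y_k)$ with $b_k\in B$ and $Y_k\in\mathfrak{h}$. The next point is that the differential of the $H$-action on $B$ at $Y\in\mathfrak h$ is $z\mapsto[\delta(Y),z]$: indeed $[Y\otimes I,u\otimes T]=(\mathrm{ad}\,Y)u\otimes T$ and $[1\otimes\tau(Y),u\otimes T]=u\otimes[\tau(Y),T]$, whose sum is exactly the derivative at $h=e$ of $h\mapsto\mathrm{Ad}(h)u\otimes\tau(h)T\tau(h)^{-1}$. Since $H$ is connected, any $a\in B^H$ therefore commutes with every $\delta(Y)$, $Y\in\mathfrak h$. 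Consequently, for $j=\sum_k b_k\delta(Y_k)\in J^H$ and $a\in B^H$,
\[
j\,a=\sum_k b_k\,\delta(Y_k)\,a=\sum_k(b_ka)\,\delta(Y_k)\in J,
\]
while $ja\in B^H$ because $H$ acts on $B$ by algebra automorphisms fixing both $j$ and $a$; hence $ja\in J^H$. Combined with the fact that $J^H$ is a left ideal of $B^H$ (as $J$ is a left ideal of $B$ stable under $H$, recorded just before the lemma), this shows $J^H$ is two-sided. The only genuinely delicate step in the whole argument is this last one: $J$ is merely a one-sided ideal of $B$, and its $H$-invariant part is two-sided in $B^H$ only because $H$-invariance is the same as commuting with the ``total'' copy $\delta(\mathfrak h)$ of $\mathfrak h$ inside $B$ — which is precisely where connectedness of $H$ is used.
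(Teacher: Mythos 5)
Your proof is correct. Note that the paper itself offers no argument for this lemma --- it is stated as ``well-known'' and used without proof --- so there is no proof of record to compare against; what you supply is the standard argument one would expect: the telescoping/PBW induction reducing the defining generators $u\cdot h\otimes T-u\otimes h\cdot T$ to the degree-one elements $\delta(Y)=Y\otimes I+1\otimes\tau(Y)$ (using that $h\cdot T=T\circ\tau(h^\sharp)$ is a genuine left module structure because $\sharp$ is an anti-automorphism), and then the observation that the differential of the $H$-action on $\mathcal{U}(\mathfrak{g})\otimes\mathrm{End}(V)$ at $Y\in\mathfrak{h}$ is $z\mapsto[\delta(Y),z]$, so that $H$-invariant elements commute with the generators $\delta(Y)$ and right multiplication by $J^H$ stays inside $J$. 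One small correction of emphasis: the direction you actually use --- $a\in B^H$ implies $[\delta(Y),a]=0$ --- follows by differentiating the constant curve $t\mapsto\exp(tY)\cdot a$ and does not require $H$ to be connected; connectedness is only needed for the converse (infinitesimal invariance implies invariance), which your argument never invokes. So your closing sentence overstates where connectedness enters, but this does not affect the validity of the proof.
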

This implies in particular that 
the space 
$
\left(\mathcal{U}(\mathfrak{g})
\otimes_{\mathcal{U}(\mathfrak{h})}
\mathrm{End}(V)\right)^H$
is indeed an algebra.
For $X\in \mathrm{Lie}(G)$ let $r(X)$ be the left 
invariant vector 
field on $G$ given by right differentiation
$$ 
r(X)f(g)=\left[\frac{d}{dt}
f\left(g\exp(tX)\right)\right]_{t=0}\,.
$$
Let us extend $r$ by linearity on $\mathfrak{g}$ and 
on the enveloping algebra $\mathcal{U}(\mathfrak{g})$ by
\begin{equation*}
r(X_1X_2\cdots X_k)=r(X_1)\circ r(X_2)\circ\cdots r(X_k),\;\;\forall X_i\in\mathfrak{g}.
\end{equation*}
One also has right invariant vector field on $G$ defined as
$$ 
l(X)f(g)=\left[\frac{d}{dt}
f\left(\exp(tX)g\right)\right]_{t=0}\,.
$$
and left and right invariant derivatives are
related by
\begin{equation*}
 (l(u)f)(g)=r(\text{Ad}(g)u^\sharp)f)(g)
 \text{ for all }u\in\mathcal{U}(\mathfrak{g}).
\end{equation*}
In particular when $Z\in\mathcal{Z}(\mathfrak{g})$ lies in the center of the
enveloping algebra :
$$ l(Z)=r(Z^\sharp)\,.$$

\begin{proposition}\cite{KoranyiReimann}
The algebra $\mathbb{D}_G(\mathcal{V})$ 
of $G$-invariant differential operators 
on $\mathcal{V}$ is isomorphic
to the algebra 
$
\left(
 \mathcal{U}(\mathfrak{g})
 \otimes_{\mathcal{U}(\mathfrak{h})}
 \mathrm{End}(V)
\right)^H
$. 
This isomorphism
is induced by the following representation
of $\mathcal{U}(\mathfrak{g})\otimes 
\mathrm{End}(V)$ on $C^\infty(G,\,V)$ :
$$
(X\in \mathfrak{g},\, 
T\in\mathrm{End}(V),\,
f\in C^\infty(G,\,V))
\qquad
(X\otimes T)f(g)=T(r(X)f(g))\,.
$$
\end{proposition}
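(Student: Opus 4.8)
The plan is to construct the isomorphism directly from the displayed representation of $\mathcal{U}(\mathfrak{g})\otimes\mathrm{End}(V)$ on $C^\infty(G,V)$ and to verify successively that it descends to the amalgamated tensor product, that it is injective, and that its image is all of $\mathbb{D}_G(\mathcal{V})$. First I would note that the formula $(X\otimes T)f(g)=T(r(X)f(g))$ extends to an algebra representation of the tensor product algebra $\mathcal{U}(\mathfrak{g})\otimes\mathrm{End}(V)$, because $r(X)r(Y)=r(XY)$ and each constant endomorphism $T$ commutes with right differentiation, so $(u\otimes S)(v\otimes T)$ acts as $uv\otimes ST$. Identify a section of $\mathcal{V}$ with $f\in C^\infty(G,V)$ satisfying $f(gh)=\tau(h)^{-1}f(g)$; differentiating this relation along $Y\in\mathfrak{h}$ gives $r(Y)f=-\tau(Y)f$, so each generator $Y\otimes I+1\otimes\tau(Y)$ of $J$ (Lemma~\ref{generators}) annihilates $f$, and as $J$ is a left ideal it annihilates every section. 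Hence the representation restricted to the space of sections factors through $\mathcal{U}(\mathfrak{g})\otimes_{\mathcal{U}(\mathfrak{h})}\mathrm{End}(V)$. Writing $\pi$ for the $H$-action $(\pi(h)f)(g)=\tau(h)f(gh)$, whose fixed vectors are precisely the sections, a short computation gives $\pi(h)\circ(u\otimes T)\circ\pi(h)^{-1}=\mathrm{Ad}(h)u\otimes\tau(h)T\tau(h)^{-1}$ as operators on $C^\infty(G,V)$; thus conjugation by $\pi(h)$ realizes the natural $H$-action on the algebra. Consequently an $H$-invariant class commutes with every $\pi(h)$, hence preserves the space of sections, and since the $r(X)$ are left invariant the induced operator on sections is $G$-invariant. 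Using Lemma~\ref{generators}(2) to know that the $H$-invariants form an algebra, this yields an algebra homomorphism $\Phi\colon(\mathcal{U}(\mathfrak{g})\otimes_{\mathcal{U}(\mathfrak{h})}\mathrm{End}(V))^H\to\mathbb{D}_G(\mathcal{V})$.

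For injectivity, fix a vector space complement $\mathfrak{m}$ of $\mathfrak{h}$ in $\mathfrak{g}$. By the Poincar\'e--Birkhoff--Witt theorem $\mathcal{U}(\mathfrak{g})$ is free as a right $\mathcal{U}(\mathfrak{h})$-module on a PBW basis $(m_k)$ of $\mathcal{U}(\mathfrak{m})$, so each class in $\mathcal{U}(\mathfrak{g})\otimes_{\mathcal{U}(\mathfrak{h})}\mathrm{End}(V)$ has a unique representative $\sum_k m_k\otimes T_k$. If this class acts by zero on all sections, then evaluating at $e$ gives $\sum_k T_k(r(m_k)f(e))=0$ for every section $f$. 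In the chart $X\mapsto\exp(X)H$ at the base point an arbitrary smooth $V$-valued germ $\varphi$ at $0$ extends to a local section $f$, and $r(m_k)f(e)$ equals the partial derivative of $\varphi$ at $0$ indexed by the monomial $m_k$, up to contributions of strictly lower order monomials; choosing $\varphi$ a polynomial times a fixed vector $v$ and solving the resulting triangular system produces, for each $k$, a section with $r(m_k)f(e)=v$ and $r(m_{k'})f(e)=0$ for $k'\neq k$. Hence all $T_k$ vanish. This argument uses no $H$-invariance, so the representation of the whole module $\mathcal{U}(\mathfrak{g})\otimes_{\mathcal{U}(\mathfrak{h})}\mathrm{End}(V)$ on the space of sections is already faithful; I will use this stronger fact below. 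In particular $\Phi$ is injective.

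For surjectivity, let $D\in\mathbb{D}_G(\mathcal{V})$ have order $N$. Its value at the base point depends linearly and continuously on the $N$-jet of the section, so expanding that jet in the chart $\exp(\mathfrak{m})$ and a local frame one obtains $u_\alpha\in\mathcal{U}(\mathfrak{m})$ realizing $\partial^\alpha$ and continuous endomorphisms $S_\alpha\in\mathrm{End}(V)$, $|\alpha|\le N$, with $(Df)(e)=\sum_\alpha S_\alpha(r(u_\alpha)f(e))$ for all sections. Put $\omega=q\bigl(\sum_\alpha u_\alpha\otimes S_\alpha\bigr)$. Since each $r(u_\alpha)$ is left invariant and $D$ is $G$-invariant, translating the identity over $G$ gives $(Df)(g)=\sum_\alpha S_\alpha(r(u_\alpha)f(g))$ for all $g$, so $\omega$ acts on sections exactly as $D$ does. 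Finally, for $h\in H$, the conjugation formula above together with $\pi(h)f=f$ and $\pi(h)Df=Df$ for sections $f$ shows that $h\cdot\omega$ acts on sections the same way; by the faithfulness just established, $h\cdot\omega=\omega$, so $\omega\in(\mathcal{U}(\mathfrak{g})\otimes_{\mathcal{U}(\mathfrak{h})}\mathrm{End}(V))^H$ and $\Phi(\omega)=D$. Hence $\Phi$ is an isomorphism of algebras.

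The one genuinely non-formal point is the beginning of the surjectivity step for an infinite dimensional smooth $H$-module $V$: one must know that a $G$-invariant differential operator on the infinite rank bundle $\mathcal{V}$ is described by finitely many coefficients lying in $\mathrm{End}(V)$ --- equivalently that its symbol at the base point depends continuously on the jet. This is exactly what the cited result of Kor\'anyi and Reimann supplies; the remaining verifications are the routine algebraic bookkeeping sketched above.
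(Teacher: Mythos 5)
Your argument is correct, but note that the paper itself offers no proof of this proposition: it is quoted from Kor\'anyi--Reimann, so there is no internal argument to compare against. What you have written is the standard reconstruction: the generators $Y\otimes I+1\otimes\tau(Y)$ of $J$ annihilate sections because $r(Y)f=-\tau(Y)f$, conjugation by the $H$-action $\pi(h)$ realizes $u\otimes T\mapsto \mathrm{Ad}(h)u\otimes\tau(h)T\tau(h)^{-1}$ so that invariant classes preserve sections and give $G$-invariant operators, injectivity follows from the PBW freeness of $\mathcal{U}(\mathfrak{g})$ over $\mathcal{U}(\mathfrak{h})$ together with the triangular relation between $r(m_k)f(e)$ and the jet of $f$ in an exponential chart, and surjectivity follows by reading off the coefficients of $D$ at the base point, propagating by $G$-invariance, and using faithfulness to upgrade ``agrees on sections'' to equality in the quotient, hence $H$-invariance of the constructed class. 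All of these verifications check out. You also correctly isolate the one genuinely non-formal ingredient in the setting the paper actually needs (smooth, possibly infinite dimensional Fr\'echet fibers $V$): that an invariant differential operator on such a bundle is determined by finitely many continuous $\mathrm{End}(V)$-valued jet coefficients at the base point; that analytic statement is precisely what the citation to Kor\'anyi--Reimann is carrying, and deferring it there is the right call rather than a gap.
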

An element
in $(\mathcal{U}(\mathfrak{g})\otimes 
\mathrm{End}(V))^H$ 
isnamed here an abstract differential operator.
If
$D\in (\mathcal{U}(\mathfrak{g})\otimes \mathrm{End}(V))^H$, then 
we define, for any smooth representation $E$ of $H$, an invariant differential
operators on $V\otimes E$
by setting 
$$
D(E)=\varphi_E(D)\,.
$$ 
Note that when $E=E_1\oplus E_2$ as $H$-module 
one has
\begin{equation}\label{exact}
D(E)=D(E_1)\oplus D(E_2)\,.
\end{equation}
Another by-product of this construction is to provide
algebraic operators when a smooth representation $(X,\pi)$ of $G$
is given. Actually we define $D_X$ by
$$ 
\begin{array}{rccl}
 \pi\otimes c \colon &\mathcal{U}(\mathfrak{g})\otimes 
 \mathrm{End}(V)
  &\longrightarrow
  & \mathrm{End}(X\otimes V)\\
 & D & \longmapsto & D_X\,.
\end{array}
$$
Here we note $c(X)$ the action of an element
$X\in\mathrm{End}(V$
on $V$.
If $X$ and $E$ are given it is also useful to consider
the operator 
\begin{equation}\label{algebraic}
 D_X\otimes I_E=\pi\otimes c\otimes I_E (D) \in 
 \mathrm{End}(X\otimes V \otimes E) \,.
\end{equation}

In the context of Dolbeault operators
where $V=\wedge \overline{\mathfrak{u}}$,
we obtain an algebra homomorphism
$$
\varphi_E \colon 
\left(\mathcal{U}(\mathfrak{g})\otimes 
\mathrm{End}(\wedge\overline{\mathfrak{u}})\right)^H 
\xrightarrow{
 \text{~}q\circ (r\otimes c \otimes I_E)\text{~}
}
\left(
\mathcal{U}
(\mathfrak{g})\otimes_{\mathcal{U}
(\mathfrak{h})}
\mathrm{End}(
 \wedge\overline{\mathfrak{u}}\otimes E
)
\right)^H\,.
$$
 The abstract operators
we shall consider are
\begin{gather}
\hat{\dolbeault}=\sum_i X_i\otimes e(\xi_i)\,, \\
v=\sum_{i<j}1\otimes e(\xi_i)e(\xi_j)\iota([X_i,X_j]) \,, \\
\dolbeault = \hat{\dolbeault}+v\,.
\end{gather}

\section{The infinitesimal character}

First we note some basic facts of linear algebra.
If $x$ is a linear operator on a vector space such that $x^2=0$, we note
$ H_x = \frac{\ker x}{\mathrm{im}\,x}$
its cohomology. Let $\mathcal{A}$
a $\mathbb{Z}_2$-graded algebra with grading operator $\gamma$.
The graded commutator $[\,,\,]$ turns $\mathcal{A}$ into a super Lie algebra.
We define $d_a$ as the graded commutator with $a$, that is
$$	d_a(b)=[a,b]\,.$$
This means that if $a \in \mathcal{A}$ is an odd element then
$$d_a(b)=[a,b]=ab-\gamma(b)a\qquad (b\in \mathcal{A})\,.$$ 
If $a$ is even then 
$$d_a(b)=[a,b]=ab-ba\qquad (b\in \mathcal{A})\,.$$
In any case $d_a$ is a graded endomorphism of the $\mathbb{Z}_2$-graded 
vector space $\mathcal{A}$, and has the same degree as $a$ (the space of 
endomorphism is also graded). 
So $a\mapsto d_a$ is an (even) homomorphism
of the underlying super Lie algebras. This means for example
that if $a$ and $b$ are odd elements of $A$, then
\begin{equation}
\label{SuperLieAlgebra}
d_ad_b+d_bd_a=d_{[a,b]}\quad\text{so in particular} \quad  
d_a^2=d_{a^2}=0 \text{ if }a^2=0\,.
\end{equation}
\begin{proposition}
The endomorphism $d_a$ is a graded derivation.
In particular $\ker d_a$ is a subalgebra of 
$\mathrm{End}(\mathcal{A})$. Moreover $\mathrm{im}\,d_a\cap \ker d_a$ 
is a two sided ideal of $\ker d_a$.
In particular, when $a^2=0$, then $H_{d_a}$ is indeed an algebra.
\end{proposition}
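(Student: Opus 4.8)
The plan is to reduce everything to the super-Leibniz identity for the adjoint action in an associative $\mathbb{Z}_2$-graded algebra, after which the algebra statements are formal. First I would verify that $d_a$ is a graded derivation, i.e.\ that $d_a(bc)=d_a(b)\,c+(-1)^{|a|\,|b|}\,b\,d_a(c)$ for homogeneous $b,c\in\mathcal{A}$. When $a$ is even this is the familiar identity $[a,bc]=[a,b]c+b[a,c]$, immediate from associativity. When $a$ is odd one expands $d_a(bc)=abc-\gamma(bc)\,a=abc-(-1)^{|b|+|c|}bca$ and inserts the telescoping term $(-1)^{|b|}bac$ to rewrite this as $\bigl(ab-(-1)^{|b|}ba\bigr)c+(-1)^{|b|}b\bigl(ac-(-1)^{|c|}ca\bigr)=d_a(b)\,c+(-1)^{|b|}b\,d_a(c)$, which is the asserted formula since $|a|=1$. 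Extending by bilinearity from homogeneous arguments gives the derivation property in general.

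Granting this, $\ker d_a$ is a subalgebra: it contains $1$ since $d_a(1)=0$, and if $b,c\in\ker d_a$ then $d_a(bc)=d_a(b)\,c\pm b\,d_a(c)=0$. For the ideal statement, let $x\in\mathrm{im}\,d_a\cap\ker d_a$, write $x=d_a(y)$ for some $y\in\mathcal{A}$, and let $b\in\ker d_a$; it suffices to take $b$ homogeneous and extend linearly. Applying the derivation property to $by$ yields $d_a(by)=d_a(b)\,y+(-1)^{|a|\,|b|}b\,d_a(y)=(-1)^{|a|\,|b|}bx$, so $bx=(-1)^{|a|\,|b|}d_a(by)\in\mathrm{im}\,d_a$; and $d_a(bx)=d_a(b)\,x\pm b\,d_a(x)=0$ because $b,x\in\ker d_a$. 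Hence $bx\in\mathrm{im}\,d_a\cap\ker d_a$, and the computation for $xb$ is symmetric. Thus $\mathrm{im}\,d_a\cap\ker d_a$ is a two-sided ideal of $\ker d_a$.

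Finally, when $a^2=0$, equation~\eqref{SuperLieAlgebra} gives $d_a^2=d_{a^2}=0$, so $\mathrm{im}\,d_a\subseteq\ker d_a$ and therefore $\mathrm{im}\,d_a\cap\ker d_a=\mathrm{im}\,d_a$. Then $H_{d_a}=\ker d_a/\mathrm{im}\,d_a$ is the quotient of an algebra by a two-sided ideal, and so carries a natural algebra structure. I do not expect any genuine obstacle here; the only point demanding care is the sign bookkeeping in the odd case, together with the remark that the ideal property need only be checked on homogeneous elements before being extended by linearity.
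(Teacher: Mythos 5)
Your proof is correct and follows essentially the same route as the paper: verify the super-Leibniz rule for $d_a$ (the odd case being the only one needing sign care), deduce that $\ker d_a$ is a subalgebra, show that an element $d_a(y)\in\ker d_a$ multiplied on either side by an element of $\ker d_a$ is again of the form $d_a(\cdot)$, and conclude that $H_{d_a}$ is an algebra when $a^2=0$. The only cosmetic difference is that you phrase the signs via homogeneous elements and handle the right multiplication by applying the derivation rule to $yb$, whereas the paper writes the rule with $\gamma(b)$ and uses that $\gamma$ preserves $\ker d_a$ for the product $cb$; both are equivalent.
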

\begin{proof}
For $b,c\in \mathcal{A}$ on has for a given odd $a$
$$ d_a(bc)=abc-\gamma(bc)a=
d_a(b)c+\gamma(b)ac-\gamma(b)\gamma(c)a=
d_a(b)c+\gamma(b)d_a(c)\,.$$
Assume $b,c\in \ker d_a$. Then 
it immediately follows that $bc\in \ker d_a$.
Now if moreover $b=d_a(b')$ for some $b'\in \ker d_a^2$,
then, $bc=d_a(b')c+\gamma(b')d_a(c)=d_a(bc)$
and, using $\gamma(c)\in \ker d_a$ (because $\gamma$
anticommutes with $d_a$),
$$cb=cd_a(b')=d_a(\gamma(c)b')-d_a(\gamma(c))b'=d_a(\gamma(c)b')\,.$$
\end{proof}
Now assume we have a graded representation 
of $\mathcal{A}$,
$$ \pi \colon \mathcal{A} \to \mathrm{End}(V)\,,$$
on a $\mathbb{Z}_2$-graded vector space $V$.
Then the representation 
$\pi$ induces by restriction a representation 
of the algebra $\ker d_a$
$$ 
 \pi \colon \ker d_a \to \ker d_{\pi(a)} 
 \subset \mathrm{End}(\ker \pi(a)) \,.
$$
Moreover $\pi(\mathrm{im} \,d_a)\subset
 \mathrm{im} \,d_{\pi(a)}$, and 
\begin{enumerate}
\item 
 The ideal $\mathrm{im} \,d_{\pi(a)}$ 
 sends $\ker \pi(a)$ on
 $\mathrm{im}\,\pi(a)$.
\item
 The algebra $\ker d_{\pi(a)}$ leaves 
 $\mathrm{im}\,\pi(a)$ stable. 
\end{enumerate}
This implies that $\pi$ induces a representation
$$
H_{d_a} \to H_{d_{\pi(a)}} \subset 
\mathrm{End}\left(H_{\pi(a)}\right)\,.
$$

\label{concret}
The main examples we will consider
here are as follows. The space $V$ is the space of smooth sections
of the vector bundle 
$\wedge\overline{\mathcal{U}} \otimes \mathcal{E}$
or 
$\wedge\overline{\mathcal{U}} 
\otimes \mathcal{E} \otimes \mathcal{F}$, 
and when $\mathcal{A}$ is the algebra of abstract 
differential operators, the representation of $\mathcal{A}$
will be $\varphi_E$. 
When we will discuss tensoring with finite dimensional representations $F$ of $G$, 
the algebra will be $\mathbb{D}_G
(\wedge\overline{\mathcal{U}}\otimes\mathcal{E}\otimes\mathcal{F})$
and the operator will be $\Delta_F(\dolbeault(E))$
(see definition below, equation (\ref{delta_F})).
Note that the $\mathbb{Z}$-graduation on a tensor algebra 
induces a $\mathbb{Z}_2$-graduation on the exterior
algebra.

Summarizing this discussion, we have obtained that 
$\varphi_E$ induces a well defined map in cohomology
$$
 \overline{\varphi}_E\colon 
 H_{d_{\dolbeault}}
 \to H_{d_{\dolbeault(E)}}
 \subset \mathrm{End}\left(H_{\dolbeault(E)}\right)\,.
$$ 
The algebra $H_{d_{\dolbeault}}$ 
then acts on $H_{\dolbeault(E)}$. 

\begin{remark}\emph{
The operator $\dolbeault(E)$ is continuous in the smooth topology. Thus its kernel is closed
and inherits the structure of a Fréchet space
with a smooth action of $G$ on it.
Thus the left representation of 
$\mathcal{U}(\mathfrak{g})$ on it is well defined. 
It restricts to $l(Z)$ for $Z\in \mathcal{Z}(\mathfrak{g})$.
If such a $Z$ is seen as an invariant differential
operator acting by $r(Z)\otimes I_{\wedge^\bullet\overline{\mathfrak{u}}\otimes E}$, 
remember that 
$$l(Z)=r(Z^\sharp)\otimes I_{\wedge^\bullet\overline{\mathfrak{u}}\otimes E}\,.$$
}\end{remark}
As any element in $\mathcal{Z}(\mathfrak{g})\otimes  I$
commutes with abstract odd differential operators and has even degree, 
it lies in the kernel of $d_{\dolbeault}$. 
We have proved the following 
\begin{proposition} 
 The restriction to $\mathcal{Z}(\mathfrak{g)}$ 
 of the  (left) action
 of $\mathcal{U}(\mathfrak{g})$ on 
 $\ker \dolbeault(E)$ goes down
 to a well defined action $l$ on $H_{\dolbeault(E)}$.
 Moreover
 if $Z\in\mathcal{Z}(\mathfrak{g)}$, then the action
 of $l(Z)$ 
 on $H_{\dolbeault(E)}$ only
 depends on the class of $Z^\sharp$ in the cohomology 
 space $H_{d_{\dolbeault}}$. 
 More precisely this action is given, 
 for $[f]\in H_{\dolbeault(E)}$ by
$$
 l(Z)[f]=[r\otimes 
  I_{\wedge\overline{\mathfrak{u}}\otimes E} 
  (Z^\sharp) f]\,.
$$
\end{proposition}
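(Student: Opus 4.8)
The plan is to assemble the pieces already in place. First I would recall that, by hypothesis, the (left) action of $G$ on $C^\infty(G,\wedge\overline{\mathfrak{u}}\otimes E)^H$ commutes with $\dolbeault(E)$; since $\dolbeault(E)$ is continuous for the smooth topology, its kernel is a closed, $G$-stable Fréchet subspace carrying a smooth $G$-action, so (as recorded in the Remark) it carries the derived left $\mathcal{U}(\mathfrak{g})$-action, whose restriction to $\mathcal{Z}(\mathfrak{g})$ is $Z\mapsto l(Z)$. Because the $G$-action commutes with $\dolbeault(E)$, so does every element of the derived action (equivalently, $\dolbeault(E)$ is built from the left-invariant fields $r(X_i)$ and from the fibrewise term $v$, both of which commute with the right-invariant operators). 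Hence each $l(u)$ preserves $\ker\dolbeault(E)$, and if $f-f'=\dolbeault(E)\zeta$ with $\zeta$ smooth then $l(u)f-l(u)f'=\dolbeault(E)\bigl(l(u)\zeta\bigr)$, so $l(u)$ also preserves $\mathrm{im}\,\dolbeault(E)$ and descends to a well-defined endomorphism of $H_{\dolbeault(E)}$. Restricting to $\mathcal{Z}(\mathfrak{g})$ gives the first assertion.

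For the refined statement I would use the identity $l(Z)=r(Z^\sharp)$, valid on $C^\infty(G)$ and hence componentwise, so that on the space of sections $l(Z)$ is exactly $\varphi_E(Z^\sharp\otimes I)$. Here $Z^\sharp$ is again central, hence $\mathrm{Ad}(H)$-fixed, so $Z^\sharp\otimes I\in(\mathcal{U}(\mathfrak{g})\otimes\mathrm{End}(\wedge\overline{\mathfrak{u}}))^H$; it has even degree and commutes with $\hat{\dolbeault}$ and with $v$, so $d_{\dolbeault}(Z^\sharp\otimes I)=0$ and it defines a class $[Z^\sharp\otimes I]\in H_{d_{\dolbeault}}$. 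Feeding this class into the already-constructed map $\overline{\varphi}_E\colon H_{d_{\dolbeault}}\to H_{d_{\dolbeault(E)}}\subset\mathrm{End}\bigl(H_{\dolbeault(E)}\bigr)$, one sees that $\overline{\varphi}_E([Z^\sharp\otimes I])$ is, by construction, the endomorphism obtained by letting $\varphi_E(Z^\sharp\otimes I)=r\otimes I_{\wedge\overline{\mathfrak{u}}\otimes E}(Z^\sharp)$ act on $\dolbeault(E)$-cocycles and then pass to cohomology — that is, precisely the descended action of $l(Z)$ from the first paragraph, given on classes by $[f]\mapsto[r\otimes I_{\wedge\overline{\mathfrak{u}}\otimes E}(Z^\sharp)f]$. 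Since this endomorphism is the image of $[Z^\sharp\otimes I]$ under $\overline{\varphi}_E$, it depends only on that class, which is the last claim.

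I do not expect a genuine obstacle: the algebraic content is entirely supplied by the earlier general lemmas (the graded-derivation lemma, the functoriality of $d_a$ under a graded representation, and the fact that $\varphi_E$ is an algebra homomorphism with $\varphi_E(\dolbeault)=\dolbeault(E)$). The only points that require care are the topological ones making the $\mathcal{U}(\mathfrak{g})$-action on $\ker\dolbeault(E)$ legitimate — closedness of the kernel, continuity of $\dolbeault(E)$ and of each $l(u)$, and smoothness of the $G$-representation on that Fréchet space — together with the observation that $\mathrm{im}\,\dolbeault(E)$, although possibly not closed, is still $l(u)$-stable, so that the (possibly non-Hausdorff) cohomology quotient inherits the action. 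The single elementary computation that must be made explicit is the commutation of left- and right-invariant vector fields, which is exactly what lets $l(Z)$ be replaced on the nose by the invariant operator $r(Z^\sharp)=\varphi_E(Z^\sharp\otimes I)$.
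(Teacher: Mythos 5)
Your proposal is correct and follows essentially the same route as the paper: it invokes the general fact that the graded representation $\varphi_E$ induces $\overline{\varphi}_E\colon H_{d_{\dolbeault}}\to\mathrm{End}\bigl(H_{\dolbeault(E)}\bigr)$, notes that $Z^\sharp\otimes I$ is even, $H$-invariant and central (hence a $d_{\dolbeault}$-cocycle), and uses $l(Z)=r(Z^\sharp)\otimes I_{\wedge\overline{\mathfrak{u}}\otimes E}$ together with the continuity/closedness remarks to descend the action to cohomology. This is exactly the argument the paper compresses into the discussion and remark preceding the proposition.
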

 
We now need to compute this representation of $\mathcal{Z}(\mathfrak{g})$ in terms
of Harish-Chandra parameters.
If the operator $\dolbeault$ is replaced by
the cubic Dirac operator $D$ the representation
$\mathcal{Z}(\mathfrak{g})\to H_{d_D}$ has been computed in the proof of the Vogan conjecture as given by Huang and 
\pandzic~\cite{HuangPandzic-book}~\cite{HuangPandzic2002}.
In their proof they determine a homomorphism
$$
 \zeta_D \colon \mathcal{Z}(\mathfrak{g})  
 \rightarrow \mathcal{Z}(\mathfrak{h})
$$
in the case $H=K$ and Kostant~\cite{Kostant2003} extends it to the general case.
Let 
$$
\delta_\mathfrak{h}\colon \mathcal{U}(\mathfrak{h})\to
\mathcal{U}(\mathfrak{g})\otimes 
\mathrm{End(\wedge^\bullet \overline{\mathfrak{u}}})
$$ 
be the derivative of the representation
of $H$ on $\mathcal{U}(\mathfrak{g})\otimes 
\mathrm{End(\wedge^\bullet \overline{\mathfrak{u}}})$.
The key point in the proof of Huang and \pandzic~{is} to get 
a Hodge decomposition
\begin{equation}\label{cohomology}
 \ker d_D\simeq \delta_{\mathfrak{h}} (\mathcal{Z}(\mathfrak{h}))\oplus \mathrm{im}\,d_D\,.
\end{equation}
So $H_{d_D}\simeq\delta_{\mathfrak{h}}(\mathcal{Z}(\mathfrak{h}))
\simeq\mathcal{Z}(\mathfrak{h})$, and the definition of
the map $\zeta_D$ is precisely that $\zeta_D(Z)$ is the unique 
representative of the class of $Z$ in the quotient.
In the next section we will prove the following theorem
which states the analogous result for the Dolbeault operator.
\begin{theorem} \label{cohomology-general}
Let $D=\dolbeault$ be the abstract Dolbeault operator. 
Then the decomposition in equation~(\ref{cohomology})
is still true.
\end{theorem}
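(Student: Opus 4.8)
The plan is to compute $H_{d_{\dolbeault}}$ on the algebra $\mathcal{A}=\bigl(\mathcal{U}(\mathfrak{g})\otimes\mathrm{End}(\wedge\overline{\mathfrak{u}})\bigr)^H$ by a two–stage filtration argument, parallel to the Huang--\pandzic--Kostant analysis of the cubic Dirac operator; the new feature is that $\dolbeault$ only ``sees'' the subalgebra $\mathfrak{u}$, so the first reduction produces a Koszul complex in the $\mathfrak{u}$-directions only and a further reduction to $\mathfrak{u}$-cohomology is needed. First I would filter $\mathcal{A}$ by the PBW degree of the $\mathcal{U}(\mathfrak{g})$-factor. Since $\hat{\dolbeault}$ has $\mathcal{U}$-degree $1$ and $v$ has $\mathcal{U}$-degree $0$, the derivation $d_{\dolbeault}$ raises this filtration by at most one; using the $H$-equivariant symmetrization splitting one writes $d_{\dolbeault}=\mathfrak{d}_1+\mathfrak{d}_0$ with $\mathfrak{d}_1$ of degree $+1$ and $\mathfrak{d}_0$ of degree $0$. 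From $d_{\dolbeault}^2=0$ and the absence of lower–degree components one gets $\mathfrak{d}_1^2=\mathfrak{d}_0^2=\mathfrak{d}_1\mathfrak{d}_0+\mathfrak{d}_0\mathfrak{d}_1=0$, so the associated spectral sequence has $E_2=E_\infty=\mathrm{gr}\,H_{d_{\dolbeault}}$.

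Next I would compute $E_1=H(\mathrm{gr}\,\mathcal{A},\mathfrak{d}_1)$. Here $\mathrm{gr}\,\mathcal{A}=\bigl(S(\mathfrak{g})\otimes\mathrm{End}(\wedge\overline{\mathfrak{u}})\bigr)^H$ and $\mathfrak{d}_1$ is $d$ for the symbol $\sum_i X_i\otimes e(\xi_i)\in S^1(\mathfrak{g})\otimes\mathrm{End}(\wedge\overline{\mathfrak{u}})$ (the cubic term $v$ drops out). Identifying $\mathrm{End}(\wedge\overline{\mathfrak{u}})\cong\wedge\overline{\mathfrak{u}}\otimes\wedge\mathfrak{u}$ via the pairing $B$ and evaluating the graded commutator $[e(\xi_i),-]$, one finds that $\mathfrak{d}_1$ becomes the Koszul differential $\sum_i m(X_i)\otimes\mathrm{id}_{\wedge\overline{\mathfrak{u}}}\otimes\iota(\xi_i)$ on $S(\mathfrak{g})\otimes\wedge\overline{\mathfrak{u}}\otimes\wedge\mathfrak{u}$. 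Since $\{X_i\}$ is part of a basis of $\mathfrak{g}$ it is a regular sequence in the polynomial ring $S(\mathfrak{g})$, so this Koszul complex is acyclic except in $\wedge\mathfrak{u}$-degree $0$, and $E_1=\bigl(S(\mathfrak{g}/\mathfrak{u})\otimes\wedge\overline{\mathfrak{u}}\bigr)^H$.

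The key step is to identify the induced differential $\overline{\mathfrak{d}}_0$ on $E_1$. Tracking $\mathfrak{d}_0(f\otimes c)=\sum_i[X_i,f]\otimes e(\xi_i)c+f\otimes[v,c]$ through the identifications, one sees that $[X_i,-]$ induces the adjoint action of $\mathfrak{u}$ on $S(\mathfrak{g}/\mathfrak{u})$, while $v$ acts on $\wedge\overline{\mathfrak{u}}=\wedge\mathfrak{u}^*$ as (up to sign) the Chevalley--Eilenberg differential of $\mathfrak{u}$; hence $(E_1,\overline{\mathfrak{d}}_0)$ is precisely the Chevalley--Eilenberg complex $\bigl(\wedge^\bullet\mathfrak{u}^*\otimes S(\mathfrak{g}/\mathfrak{u})\bigr)^H$ computing $H^\bullet(\mathfrak{u},S(\mathfrak{g}/\mathfrak{u}))^H$, with $\mathfrak{u}$ acting on $S(\mathfrak{g}/\mathfrak{u})$ through the quotient of the adjoint representation. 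To evaluate this I would filter $S(\mathfrak{g}/\mathfrak{u})=S(\mathfrak{h}\oplus\overline{\mathfrak{u}})$ by degree in $\overline{\mathfrak{u}}$: on the associated graded $\mathfrak{u}$ acts trivially on $S(\mathfrak{h})$ and coadjointly on $S(\overline{\mathfrak{u}})\cong S(\mathfrak{u}^*)$, so the $\mathrm{gr}$-complex is $\bigl(\wedge^\bullet\mathfrak{u}^*\otimes S(\mathfrak{h})\otimes S(\mathfrak{u}^*)\bigr)^H$. Now $\xi_0\in\mathfrak{h}$ acts on $\mathfrak{u}^*$ with strictly negative eigenvalues, hence on $\wedge^k\mathfrak{u}^*\otimes S^d(\mathfrak{u}^*)$ with strictly negative eigenvalue unless $k=d=0$; since $H$-invariants have $\xi_0$-eigenvalue $0$, this complex collapses to $S(\mathfrak{h})^H$ in degree $0$ with zero differential. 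So this second spectral sequence degenerates, $H^\bullet(\mathfrak{u},S(\mathfrak{g}/\mathfrak{u}))^H=S(\mathfrak{h})^H$ concentrated in degree $0$, and $E_2=S(\mathfrak{h})^H$.

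Finally, $\mathrm{gr}\,H_{d_{\dolbeault}}=S(\mathfrak{h})^H$. On the other hand $\delta_{\mathfrak{h}}(\mathcal{Z}(\mathfrak{h}))\subseteq\ker d_{\dolbeault}$ (as $\delta_{\mathfrak{h}}$ is an algebra homomorphism implementing the $\mathfrak{h}$-action, $\dolbeault$ is $H$-fixed, and $d_{\dolbeault}$ is a derivation), and $\mathrm{gr}\,\delta_{\mathfrak{h}}(\mathcal{Z}(\mathfrak{h}))=S(\mathfrak{h})^H$ since the leading PBW term of $\delta_{\mathfrak{h}}(Z)$ is $\overline{Z}\otimes 1$; chasing the two spectral sequences shows the induced map $\mathrm{gr}\,\delta_{\mathfrak{h}}(\mathcal{Z}(\mathfrak{h}))\to\mathrm{gr}\,H_{d_{\dolbeault}}$ is the identity of $S(\mathfrak{h})^H$. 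As both filtrations are exhaustive and bounded below, $\delta_{\mathfrak{h}}(\mathcal{Z}(\mathfrak{h}))\to H_{d_{\dolbeault}}$ is then a filtered isomorphism, which is equivalent to $\ker d_{\dolbeault}=\delta_{\mathfrak{h}}(\mathcal{Z}(\mathfrak{h}))\oplus\mathrm{im}\,d_{\dolbeault}$. The delicate point is the third step: checking, with all signs and conventions, that after passing to symbol-cohomology the cubic term $v$ turns into exactly the Lie-bracket part of the Chevalley--Eilenberg differential and $[X_i,-]$ into the coefficient-module part — that is, that the Koszul reduction converts $d_{\dolbeault}$ into genuine $\mathfrak{u}$-cohomology; the weight argument of the fourth step and the bookkeeping of the two spectral sequences are then routine.
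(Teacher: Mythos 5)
Your argument is essentially the paper's: filter by the $\mathcal{U}(\mathfrak{g})$-degree, identify the leading (symbol) part of $d_{\dolbeault}$ with the Koszul differential in the $\mathfrak{u}$-directions, collapse the $H$-invariants of the resulting graded cohomology by the $\xi_0$-weight argument, and match the outcome $S(\mathfrak{h})^H$ against the leading term of $\delta_{\mathfrak{h}}(\mathcal{Z}(\mathfrak{h}))$ — the paper (Lemmas \ref{commut} and \ref{koszul}) packages your spectral-sequence/filtered-isomorphism step as an explicit induction on the order of operators, which is the same argument. Two minor caveats: symmetrization does not give an exact two-term decomposition $d_{\dolbeault}=\mathfrak{d}_1+\mathfrak{d}_0$ (left multiplication by $X_i$ is not homogeneous under the PBW identification), but this is harmless since the filtration spectral sequence exists regardless and degenerates because your $E_2=S(\mathfrak{h})^H$ sits in even exterior degree $0$ while the higher differentials are odd; and your Chevalley--Eilenberg/second-filtration step is redundant, since the $\xi_0$-weight argument applied directly to $\bigl(S(\mathfrak{h})\otimes S(\overline{\mathfrak{u}})\otimes\wedge^\bullet\overline{\mathfrak{u}}\bigr)^H$ already shows the $E_1$-term is $S(\mathfrak{h})^H$ concentrated in exterior degree $0$, so the induced differential vanishes for free — which is exactly how the paper proceeds.
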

In particular we again have
$H_{d_{\dolbeault}}\simeq
\delta_{\mathfrak{h}}(\mathcal{Z}(\mathfrak{h}))
\simeq\mathcal{Z}(\mathfrak{h})$ and we can still
define a map 
$$\zeta_{\dolbeault}\colon
\mathcal{Z}(\mathfrak{g}) \to
\mathcal{Z}(\mathfrak{h})\,.$$
\begin{lemma} 
The map $\zeta_{\dolbeault}$
commutes with the sharp maps $\sharp$ of $\mathfrak{g}$
and $\mathfrak{h}$ ; in other words 
$$
\zeta_{\dolbeault}(Z)^\sharp=
\zeta_{\dolbeault}(Z^\sharp)\,,\qquad (Z\in\mathcal{Z}(\mathfrak{g}))\,.
$$
\end{lemma}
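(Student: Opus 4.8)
The plan is to reduce the statement to the description of $\zeta_{\dolbeault}$ furnished by the Hodge decomposition of Theorem~\ref{cohomology-general}: for $Z\in\mathcal{Z}(\mathfrak{g})$, the element $\zeta_{\dolbeault}(Z)$ is the unique element of $\mathcal{Z}(\mathfrak{h})$ with
$$
Z\otimes I-\delta_{\mathfrak{h}}\bigl(\zeta_{\dolbeault}(Z)\bigr)\in\mathrm{im}\,d_{\dolbeault}\,.
$$
So it is enough to produce an anti-automorphism $\sigma$ of the algebra $\bigl(\mathcal{U}(\mathfrak{g})\otimes\mathrm{End}(\wedge\overline{\mathfrak{u}})\bigr)^{H}$ of abstract differential operators such that (i) $\sigma(Z\otimes I)=Z^{\sharp}\otimes I$ for $Z\in\mathcal{Z}(\mathfrak{g})$, (ii) $\sigma\circ\delta_{\mathfrak{h}}=\delta_{\mathfrak{h}}\circ\sharp$ on $\mathcal{Z}(\mathfrak{h})$, and (iii) $\sigma$ preserves both $\mathrm{im}\,d_{\dolbeault}$ and $\ker d_{\dolbeault}$. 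Indeed, applying such a $\sigma$ to the displayed relation and using (i)--(iii) gives $Z^{\sharp}\otimes I-\delta_{\mathfrak{h}}\bigl(\zeta_{\dolbeault}(Z)^{\sharp}\bigr)\in\mathrm{im}\,d_{\dolbeault}$, and since $\zeta_{\dolbeault}(Z)^{\sharp}\in\mathcal{Z}(\mathfrak{h})$ (anti-automorphisms preserve the centre), the uniqueness in the Hodge decomposition together with injectivity of $\delta_{\mathfrak{h}}$ on $\mathcal{Z}(\mathfrak{h})$ forces $\zeta_{\dolbeault}(Z^{\sharp})=\zeta_{\dolbeault}(Z)^{\sharp}$.

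For $\sigma$ I would take the graded tensor product of the principal anti-automorphism $\sharp$ of $\mathcal{U}(\mathfrak{g})$ and the principal anti-automorphism of the Clifford algebra $\mathrm{End}(\wedge\overline{\mathfrak{u}})\cong C(\mathfrak{u}\oplus\overline{\mathfrak{u}})$, the latter characterised by acting as the identity on $\mathfrak{u}\oplus\overline{\mathfrak{u}}$; equivalently it is the adjoint with respect to the canonical bilinear form on the spinor module $\wedge\overline{\mathfrak{u}}$, the form for which the image of $\mathfrak{so}(\mathfrak{u}\oplus\overline{\mathfrak{u}})$ acts by skew operators. One checks that $\sigma$ commutes with the grading $\gamma$ and with the $H$-action, hence restricts to the $H$-invariants. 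Property (i) is immediate. For (ii), since $\delta_{\mathfrak{h}}$ is an algebra homomorphism and $\sharp$ is a ring homomorphism of the commutative ring $\mathcal{Z}(\mathfrak{h})$, it suffices to see that $\sigma\bigl(\delta_{\mathfrak{h}}(Y)\bigr)=-\delta_{\mathfrak{h}}(Y)$ for $Y\in\mathfrak{h}$: writing $\delta_{\mathfrak{h}}(Y)=Y\otimes I+1\otimes\tau(Y)$, the $\mathrm{End}(\wedge\overline{\mathfrak{u}})$-component $\tau(Y)$ lies in the image of $\mathfrak{so}(\mathfrak{u}\oplus\overline{\mathfrak{u}})$ and is skew for the canonical form, so the Clifford factor of $\sigma$ sends it to $-\tau(Y)$, while the $\mathcal{U}(\mathfrak{g})$-factor sends $Y$ to $-Y$.

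Property (iii) reduces to the identity $\sigma(\dolbeault)=-\dolbeault$. For the linear part this is clear, $\sigma(\hat{\dolbeault})=\sum_{i}\sharp(X_{i})\otimes e(\xi_{i})=-\hat{\dolbeault}$, so the real work is to verify $\sigma(v)=-v$ for the cubic correction term; this is the Dolbeault counterpart of the invariance of Kostant's cubic Dirac operator under the principal anti-automorphism. Once $\sigma(\dolbeault)=-\dolbeault$ is established, a short formal computation with the graded commutator — using that $\sigma$ is an anti-automorphism commuting with $\gamma$ — yields $\sigma\bigl([\dolbeault,a]\bigr)=[\dolbeault,\gamma\sigma(a)]$, so that $\sigma$ stabilises $\mathrm{im}\,d_{\dolbeault}$; applying this to $a\in\ker d_{\dolbeault}$, together with the fact that $\gamma$ anticommutes with $d_{\dolbeault}$ and hence preserves $\ker d_{\dolbeault}$, shows that $\sigma$ also stabilises $\ker d_{\dolbeault}$, and therefore descends to $H_{d_{\dolbeault}}$.

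The hard part will be the identity $\sigma(v)=-v$, i.e. the assertion that the cubic term of the twisted Dolbeault operator is self-transpose up to sign. This is delicate for exactly the reason it is in the cubic Dirac case: the factors $e(\xi_{i})$ and $\iota([X_{i},X_{j}])$ occurring in $v$ do not anticommute, so reversing their product produces lower-order terms, and one must rewrite $v$ in a form making the total antisymmetry of the structure constants of $\mathfrak{u}$ (with respect to $B$) visible, in order to see those corrections cancel after summation. Everything else in the argument is routine bookkeeping.
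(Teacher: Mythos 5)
Your route is genuinely different from the paper's: the paper does not construct any anti\--automorphism, but instead identifies $\zeta_{\dolbeault}$ explicitly (Lemma \ref{zeta}: $\zeta_{\dolbeault}(Z)=p(Z)$, the projection along $\overline{\mathfrak{u}}\,\mathcal{U}(\mathfrak{g})+\mathcal{U}(\mathfrak{g})\mathfrak{u}$ coming from the PBW decomposition~(\ref{decomposition})) and declares the $\sharp$-compatibility to follow from that identification. Your reduction scheme (i)--(iii) is in itself reasonable, and the step you flag as the hard one is actually the easy one: reversing $e(\xi_i)e(\xi_j)\iota([X_i,X_j])$ produces first--order corrections proportional to $\mathrm{tr}_{\mathfrak{u}}\bigl(\mathrm{ad}(X)|_{\mathfrak{u}}\bigr)$ for $X\in\mathfrak{u}$, and these vanish because $\mathrm{ad}(X)$ strictly raises the $\mathrm{ad}\,\xi_0$-eigenvalue; so $\sigma(v)=-v$ holds, but in your write--up it is asserted rather than proved.

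The genuine gap is property (ii). In $\delta_{\mathfrak{h}}(Y)=Y\otimes I+1\otimes\tau(Y)$ (cf.\ Lemma \ref{generators}), $\tau(Y)$ is the honest derivation action of $\mathfrak{h}$ on $\wedge^\bullet\overline{\mathfrak{u}}$, not the quadratic element in the image of $\mathfrak{so}(\mathfrak{u}\oplus\overline{\mathfrak{u}})$: the two differ by the scalar $\rho(\mathfrak{u})(Y)$. Consequently the Clifford reversal gives $\alpha(\tau(Y))=-\tau(Y)-2\rho(\mathfrak{u})(Y)$, not $-\tau(Y)$, so $\sigma\circ\delta_{\mathfrak{h}}=\delta_{\mathfrak{h}}\circ\sharp'$, where $\sharp'$ is the antipode composed with the shift $Y\mapsto Y+2\rho(\mathfrak{u})(Y)$; your argument then yields the twisted identity $\zeta_{\dolbeault}(Z^{\sharp})=\sharp'\bigl(\zeta_{\dolbeault}(Z)\bigr)$ rather than the stated one. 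This is not bookkeeping you can absorb: the discrepancy is exactly the $\rho(\mathfrak{u})$-shift the paper is about, and it is consistent with the paper's own identification $\zeta_{\dolbeault}=p$ --- already for $\mathfrak{g}=\mathfrak{sl}(2)$, $\mathfrak{u}=\mathbb{C}e$, the Casimir satisfies $\Omega^{\sharp}=\Omega$ while $p(\Omega)=\tfrac12h^2+h$ is not fixed by $h\mapsto-h$, so no $\sigma$ can satisfy (i)--(iii) simultaneously with this normalization of $\delta_{\mathfrak{h}}$. To salvage your approach you would have to build the $\rho(\mathfrak{u})$-twist into $\sigma$ (equivalently, work with the trace--corrected, ``spin'' quadratic elements $\tau(Y)+\rho(\mathfrak{u})(Y)$, for which reversal is genuinely $-1$) and then explain how the resulting twisted compatibility matches the way the lemma is used in the displayed formula following it; as written, the proposal does not close.
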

This lemma will be clear from the identification
of the map $\zeta_{\dolbeault}$ given in the 
proof of theorem \ref{cohomology-general} below.
In particular
$r\otimes c \otimes I_E (\zeta_{\dolbeault}(Z^\sharp))
=1\otimes I_{\wedge\overline{\mathfrak{u}}}\otimes \sigma(\zeta_{\dolbeault}(Z^\sharp)^\sharp)
=1\otimes I_{\wedge\overline{\mathfrak{u}}}\otimes \sigma(\zeta_{\dolbeault}(Z))$
on $C^\infty(G,\,\wedge^\bullet\overline{\mathfrak{u}}\otimes E)^H$. Summarizing one gets
$$ 
 l(Z)[f]=
 [r\otimes 
 I_{\wedge\overline{\mathfrak{u}}\otimes E} 
 (Z^\sharp) f]=
 [r\otimes c \otimes I_E (\zeta_{\dolbeault}(Z^\sharp))f]=
 [1\otimes I_{\wedge\overline{\mathfrak{u}}}\otimes\sigma(\zeta_{\dolbeault}(Z))f]\,.
$$ 

\begin{theorem} [theorem \ref{mainthm}]
\label{infinitesimal}
 Let $E$ be a smooth representation
 of $H$ which is $\mathcal{Z}(\mathfrak{h})$-finite. 
 If $ E=\oplus_\mu E(\mu) $
 is a decomposition of $E$ in primary 
 $\mathcal{Z}(\mathfrak{h})$-modules with respective 
 generalized infinitesimal character $\chi_\mu$, 
 then as a 
 $(\mathcal{Z}(\mathfrak{g}),G)$-module 
 $$ 
 H_{\dolbeault(E)} =\oplus_\mu H_{\dolbeault(E(\mu))}\,.
 $$
 and the representation of 
 $\mathcal{Z}(\mathfrak{g})$ on $H_{\dolbeault(E(\mu))}$
 is given by
 the generalized infinitesimal character
 $\chi_{\mu+\rho(\mathfrak{u})}$.
\end{theorem}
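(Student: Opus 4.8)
The plan is to deduce Theorem \ref{infinitesimal} from Theorem \ref{cohomology-general} together with the preliminary reductions already set up. First I would dispose of the direct sum decomposition: since $E=\oplus_\mu E(\mu)$ as an $H$-module, equation~(\ref{exact}) gives $\dolbeault(E)=\oplus_\mu \dolbeault(E(\mu))$ as operators on the corresponding direct sum of section spaces, so passing to kernels and images commutes with the finite (or locally finite) direct sum and hence $H_{\dolbeault(E)}=\oplus_\mu H_{\dolbeault(E(\mu))}$ as $G$-modules; the left $\mathcal{Z}(\mathfrak{g})$-action respects this splitting because it is induced by $G$-invariant differential operators that are diagonal with respect to the decomposition. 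This reduces the theorem to the case where $E=E(\mu)$ is $\mathcal{Z}(\mathfrak{h})$-primary with generalized infinitesimal character $\chi_\mu$.

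In that case I would use the formula already derived just before the statement, namely that for $Z\in\mathcal{Z}(\mathfrak{g})$ and $[f]\in H_{\dolbeault(E(\mu))}$ one has
$$
 l(Z)[f]=[1\otimes I_{\wedge\overline{\mathfrak{u}}}\otimes\sigma(\zeta_{\dolbeault}(Z))f]\,,
$$
where $\sigma$ denotes the action of $\mathcal{U}(\mathfrak{h})$ on $E(\mu)$ (extended trivially on the $\wedge\overline{\mathfrak{u}}$ factor through $\delta_{\mathfrak{h}}$, up to the identification $H_{d_{\dolbeault}}\simeq\mathcal{Z}(\mathfrak{h})$ coming from Theorem \ref{cohomology-general}). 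Since $E(\mu)$ has generalized infinitesimal character $\chi_\mu$, the operator $\sigma(\zeta_{\dolbeault}(Z))$ acts on $E(\mu)$ by $\chi_\mu(\zeta_{\dolbeault}(Z))$ up to a nilpotent endomorphism commuting with everything in sight; hence $l(Z)$ acts on $H_{\dolbeault(E(\mu))}$ by the scalar $\chi_\mu(\zeta_{\dolbeault}(Z))$ modulo a (locally) nilpotent operator, i.e. $H_{\dolbeault(E(\mu))}$ is $\mathcal{Z}(\mathfrak{g})$-primary with generalized infinitesimal character $Z\mapsto \chi_\mu(\zeta_{\dolbeault}(Z))=(\chi_\mu\circ\zeta_{\dolbeault})(Z)$.

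It remains to identify the composite $\chi_\mu\circ\zeta_{\dolbeault}$ as the Harish-Chandra character with parameter $\mu+\rho(\mathfrak{u})$. For this I would show that $\zeta_{\dolbeault}\colon\mathcal{Z}(\mathfrak{g})\to\mathcal{Z}(\mathfrak{h})$ is exactly the Harish-Chandra-type homomorphism corresponding to the $\rho(\mathfrak{u})$-shift. The cleanest route is to compare it with its commutative analogue via the symbol map: the symbol of $\hat{\dolbeault}=\sum_i X_i\otimes e(\xi_i)$ is the Koszul differential for $\mathfrak{u}$, and a standard Koszul/Chevalley-Eilenberg computation (the same one that underlies the Casselman-Osborne theorem and Kostant's $\mathfrak{u}$-cohomology results mentioned in the introduction) shows that on the level of Harish-Chandra homomorphisms the induced map $\mathcal{Z}(\mathfrak{g})\to\mathcal{Z}(\mathfrak{h})$ is, after applying the respective Harish-Chandra isomorphisms $\mathcal{Z}(\mathfrak{g})\cong S(\mathfrak{t})^{W_{\mathfrak{g}}}$ and $\mathcal{Z}(\mathfrak{h})\cong S(\mathfrak{t})^{W_{\mathfrak{h}}}$, simply the inclusion of invariants — which translates precisely into sending the infinitesimal character with parameter $\lambda$ (for $\mathfrak{h}$) to the one with parameter $\lambda$ (for $\mathfrak{g}$), provided one is careful that the two $\rho$-shifts differ by $\rho(\mathfrak{u})$. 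Thus $\chi_\mu\circ\zeta_{\dolbeault}=\chi_{\mu+\rho(\mathfrak{u})}$ as characters of $\mathcal{Z}(\mathfrak{g})$.

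The main obstacle is this last identification: establishing Theorem \ref{cohomology-general} (the Hodge-type decomposition $\ker d_{\dolbeault}\simeq\delta_{\mathfrak{h}}(\mathcal{Z}(\mathfrak{h}))\oplus\mathrm{im}\,d_{\dolbeault}$) and then pinning down the resulting map $\zeta_{\dolbeault}$ on the nose, including the $\rho(\mathfrak{u})$ bookkeeping. The direct-sum statement and the passage from "$l(Z)$ acts by $\sigma(\zeta_{\dolbeault}(Z))$" to "generalized infinitesimal character" are formal once that input is in hand; the weight of the argument sits in the symbol computation and the comparison of normalizations, which is exactly the step the next section is devoted to.
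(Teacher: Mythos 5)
Your proposal is correct and takes essentially the same route as the paper: the splitting of $H_{\dolbeault(E)}$ comes from equation~(\ref{exact}), the $\mathcal{Z}(\mathfrak{g})$-action on cohomology is read off from $l(Z)[f]=[1\otimes I_{\wedge\overline{\mathfrak{u}}}\otimes\sigma(\zeta_{\dolbeault}(Z))f]$ together with the generalized $\mathcal{Z}(\mathfrak{h})$-character of $E(\mu)$, and all the real weight is carried by theorem~\ref{cohomology-general} and the identification of $\zeta_{\dolbeault}$ with the PBW projection $p$ and its $\rho(\mathfrak{u})$-shift. The only cosmetic difference is that you pin down the shift by comparing the normalizations of the two Harish--Chandra isomorphisms, while the paper proves the same fact by evaluating on $H^0(\mathfrak{u},F^\lambda)=E^\lambda$ and a density argument.
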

\begin{proof} 
 In the case of the Dirac operator, 
 the morphism $\zeta_D$ 
 fits into the following commutative diagram
 \begin{equation}\label{harishchandra}
  \begin{diagram}[height=0.7cm]
   \mathcal{Z}(\mathfrak{g})
   & \rTo^{\zeta_D} & \mathcal{Z}(\mathfrak{h}) \\
   \dTo & & \dTo \\
   S(\mathfrak{t}_\mathfrak{g})^{W_G} & \rTo & S(\mathfrak{t}_\mathfrak{h})^{W_H}
  \end{diagram}
 \end{equation}
 Here the vertical arrows are the 
 Harish-Chandra isomorphism
 and the bottom map is restriction.
 For the Dolbeault operator one needs to know
 what happens to this diagram. This is
 exactly where the $\rho(\mathfrak{u})$-shift appears. 
 The argument is given along the proof of 
 theorem \ref{cohomology-general} in the next section.
\end{proof}
Assume in this theorem that $E$ has a generalized infinitesimal
character with Harish-Chandra parameters $\mu$.
Thanks to
the commutative diagram (\ref{harishchandra}), one has
that $\sigma(\zeta_{\dolbeault} (Z))$, for $Z\in\mathcal{Z}(\mathfrak{g})$ 
acts on $E$ by the infinitesimal character 
$\chi_{\lambda+\rho_{\mathfrak{u}}}$.
We deduce a geometric analogue of the Vogan conjecture
in the context of Dolbeault cohomology. 
\begin{corollary}
 Assume $E=E^\lambda$ is a finite dimensional 
 irreducible 
 representation
 of $H$ with highest weight $\lambda$. Then 
 $\mathcal{Z}(\mathfrak{g})$
 acts on $H_{\dolbeault(E^\lambda)}$ by the infinitesimal character
 $\chi_{\lambda+\rho(\mathfrak{g})}$.
\end{corollary}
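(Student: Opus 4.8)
The corollary follows by specializing Theorem~\ref{infinitesimal} to the case where $E$ is a finite dimensional irreducible $H$-module. The plan is as follows.

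First I would record the behaviour of infinitesimal characters in this elementary situation. If $E^\lambda$ is the irreducible finite dimensional $H$-module with highest weight $\lambda$, then $E^\lambda$ is certainly $\mathcal{Z}(\mathfrak{h})$-finite; in fact it has a genuine (not merely generalized) infinitesimal character, and by the Harish-Chandra parametrization this is $\chi_{\lambda+\rho(\mathfrak{h})}$, where $\rho(\mathfrak{h})$ is the half-sum of positive roots of $\mathfrak{h}$. Thus in the notation of Theorem~\ref{infinitesimal} we have a single primary component, indexed by the Harish-Chandra parameter $\mu=\lambda+\rho(\mathfrak{h})$.

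Next I would feed this into Theorem~\ref{infinitesimal}: the decomposition $H_{\dolbeault(E^\lambda)}=\oplus_\mu H_{\dolbeault(E(\mu))}$ has only one summand, and $\mathcal{Z}(\mathfrak{g})$ acts on $H_{\dolbeault(E^\lambda)}$ by the generalized infinitesimal character with Harish-Chandra parameter $\mu+\rho(\mathfrak{u})=\lambda+\rho(\mathfrak{h})+\rho(\mathfrak{u})$. The final step is the identification $\rho(\mathfrak{h})+\rho(\mathfrak{u})=\rho(\mathfrak{g})$: with the choices made in the introduction, $\mathfrak{u}$ is the sum of the positive eigenspaces of $\mathrm{ad}\,\xi_0$ and $\mathfrak{q}=\mathfrak{h}\oplus\mathfrak{u}$, so the positive roots of $\mathfrak{g}$ (for a compatible choice of positive system) split as those of $\mathfrak{h}$ together with the weights occurring in $\mathfrak{u}$; taking half-sums gives $\rho(\mathfrak{g})=\rho(\mathfrak{h})+\rho(\mathfrak{u})$. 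Hence the Harish-Chandra parameter is $\lambda+\rho(\mathfrak{g})$.

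It remains only to upgrade ``generalized infinitesimal character'' to ``infinitesimal character''. Since $E^\lambda$ genuinely affords $\chi_{\lambda+\rho(\mathfrak{h})}$, the operator $\sigma(\zeta_{\dolbeault}(Z))$ acts on the fibre $\wedge\overline{\mathfrak{u}}\otimes E^\lambda$ as an honest scalar (not merely up to a nilpotent), so by the formula $l(Z)[f]=[1\otimes I_{\wedge\overline{\mathfrak{u}}}\otimes\sigma(\zeta_{\dolbeault}(Z))f]$ recorded before Theorem~\ref{infinitesimal}, $\mathcal{Z}(\mathfrak{g})$ acts on $H_{\dolbeault(E^\lambda)}$ by the genuine character $\chi_{\lambda+\rho(\mathfrak{g})}$. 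The only point needing any care is the compatibility of positive systems making $\rho(\mathfrak{g})=\rho(\mathfrak{h})+\rho(\mathfrak{u})$ and ensuring the Harish-Chandra parameter is read off in the same Weyl chamber as in the diagram~(\ref{harishchandra}); this is built into the geometric setup (the choice of $\xi_0$ and $\mathfrak{q}$) and into the proof of Theorem~\ref{cohomology-general}.
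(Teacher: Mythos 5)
Your proposal is correct and follows essentially the same route as the paper: the corollary is simply Theorem~\ref{infinitesimal} specialized to $E=E^\lambda$, whose $\mathcal{Z}(\mathfrak{h})$ Harish-Chandra parameter is $\mu=\lambda+\rho(\mathfrak{h})$, combined with the identity $\rho(\mathfrak{h})+\rho(\mathfrak{u})=\rho(\mathfrak{g})$ coming from the compatible positive systems determined by $\xi_0$ and $\mathfrak{q}=\mathfrak{h}\oplus\mathfrak{u}$. Your additional observation that $\sigma(\zeta_{\dolbeault}(Z))$ acts by an honest scalar on the irreducible $E^\lambda$, so the cohomology carries a genuine (not merely generalized) infinitesimal character, is a correct refinement of a point the paper leaves implicit.
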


\subsection{Proof of theorem \ref{cohomology-general}}
The strategy of the proof follows closely that of Huang 
and \pandzic for the Dirac operator \cite{HuangPandzic2002,HuangPandzic-book}. However
in this case the operator $\dolbeault$ is a differential
complex on $C^\infty(G)\otimes \wedge^\bullet \overline{u}$,
not only the $H$-invariant part. 
We will write $\dolbeault_{\mathrm{full}}$ for 
this extended complex.
So $d_{\dolbeault_{\mathrm{full}}}$ is 
seen as an endomorphism of the algebra 
$\mathcal{U}(\mathfrak{g})\otimes\mathrm{End}(\wedge^\bullet\overline{u})$.
Note that the inclusion of the $H$-invariants
in the whole complex induces a map
$H_{d_{\dolbeault}} \to H_{d_{\dolbeault_{\mathrm{full}}}}$.

The algebra 
$\mathcal{U}(\mathfrak{g})\otimes\mathrm{End}(\wedge^\bullet\overline{u})$
has a natural filtration induced by the filtration 
on $\mathcal{U}(\mathfrak{g})$ and the trivial filtration on 
$\mathrm{End}(\wedge^\bullet\overline{u})\simeq \wedge^\bullet \mathfrak{h}^\perp$.
The associated graded algebra is 
$S(\mathfrak{h})\otimes S(\mathfrak{h}^\perp)\otimes 
\wedge^\bullet\mathfrak{h}^\perp$.
The differential $d_{\dolbeault_{\mathrm{full}}}$ preserves
this filtration (shifting the degree by $1$)
so it goes down to a differential 
$\mathrm{Gr}(d_{\dolbeault_{\mathrm{full}}})$
on the associated graded space by setting 
$
\mathrm{Gr}(d_{\dolbeault_{\mathrm{full}}})(\mathrm{Gr}{a})=
\mathrm{Gr}(d_{\dolbeault_{\mathrm{full}}}(a))\,.
$
The next lemma identifies this differential. 
This computation has been carried out by 
Huang, \pandzic and Renard in 
\cite{HuangPandzicRenard}, remark 2.3. 
Full details can be 
found in \cite[section 3]{HuangPandzicRenard-arxiv}. 
For reader convenience we recall here the 
steps we need from this computation.

Let $\partial_{\mathfrak{u}} \in \mathrm{End}(S(\mathfrak{h}^\perp)\otimes 
\wedge^\bullet\mathfrak{h}^\perp)$ be the Koszul differential 
along $\mathfrak{u}$ :
$$
\partial_{\mathfrak{u}} (u\otimes \omega)= 
\sum_i X_i u\otimes \iota(X_i)\omega\,.
$$
\begin{lemma}\label{commut}
We have the following commutative diagram 
$$
\begin{diagram}
 \mathcal{U}(\mathfrak{g})\otimes
 \mathrm{End}(\wedge^\bullet \overline{\mathfrak{u}})
 & \rTo^{\mathrm{Gr}} &
 \mathcal{S}(\mathfrak{h})\otimes \mathcal{S}(\mathfrak{h}^\perp)\otimes 
 \wedge^\bullet\mathfrak{h}^\perp
\\
\dTo^{d_{\dolbeault_{\mathrm{full}}}} & &
\dTo_{1\otimes\partial_{\mathfrak{u}}}
\\
 \mathcal{U}(\mathfrak{g})
 \otimes\mathrm{End}(\wedge^\bullet \overline{\mathfrak{u}})
 & \rTo^{\mathrm{Gr}} &
 \mathcal{S}(\mathfrak{h})\otimes\mathcal{S}(\mathfrak{h}^\perp)\otimes 
 \wedge^\bullet\mathfrak{h}^\perp
\end{diagram}
$$
\end{lemma}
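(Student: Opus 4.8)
The plan is to exploit the additivity of the super-commutator construction. Writing $\dolbeault = \hat{\dolbeault} + v$, the graded derivation $d_{\dolbeault_{\mathrm{full}}}$ of the algebra $\mathcal{U}(\mathfrak{g})\otimes\mathrm{End}(\wedge^\bullet\overline{\mathfrak{u}})$ splits as $d_{\dolbeault_{\mathrm{full}}} = d_{\hat{\dolbeault}} + d_v$, and I would treat the two summands separately: first show that $d_v$ induces the zero map on the associated graded, then show that $d_{\hat{\dolbeault}}$ induces exactly $1\otimes\partial_{\mathfrak{u}}$.

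For the first point: since $[\mathfrak{u},\mathfrak{u}]\subseteq\mathfrak{u}$, each bracket $[X_i,X_j]$ lies in $\mathfrak{u}$, so $\iota([X_i,X_j])$ is an honest endomorphism of $\wedge^\bullet\overline{\mathfrak{u}}$ and $v$ belongs to $1\otimes\mathrm{End}(\wedge^\bullet\overline{\mathfrak{u}})$; in particular $v$ has filtration degree $0$. Hence $d_v$ maps $\mathcal{U}_{\le n}(\mathfrak{g})\otimes\mathrm{End}(\wedge^\bullet\overline{\mathfrak{u}})$ into itself, so for $a$ of filtration degree $n$ the class of $d_v(a)$ in $\mathrm{Gr}^{n+1}$ vanishes; thus $d_v$ contributes nothing to $\mathrm{Gr}(d_{\dolbeault_{\mathrm{full}}})$ and the problem reduces to computing $\mathrm{Gr}(d_{\hat{\dolbeault}})$.

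For the second point, I would compute $d_{\hat{\dolbeault}}$ on a pure tensor. For $u\in\mathcal{U}(\mathfrak{g})$ of filtration degree $n$ and $T\in\mathrm{End}(\wedge^\bullet\overline{\mathfrak{u}})$, the graded commutator gives
$$ d_{\hat{\dolbeault}}(u\otimes T) = \sum_i\bigl(X_i u\otimes e(\xi_i)T - (-1)^{|T|}\,uX_i\otimes Te(\xi_i)\bigr)\,. $$
Since $uX_i - X_iu = -[X_i,u]$ has filtration degree $\le n$, the two leading terms combine, and the class of $d_{\hat{\dolbeault}}(u\otimes T)$ in $\mathrm{Gr}^{n+1}$ is $\sum_i\mathrm{Gr}(X_iu)\otimes[e(\xi_i),T]$, where $[\,,\,]$ now denotes the graded commutator in $\mathrm{End}(\wedge^\bullet\overline{\mathfrak{u}})$ and $\mathrm{Gr}(X_iu) = X_i\cdot\mathrm{Gr}(u)$ is the product in $S(\mathfrak{g}) = S(\mathfrak{h})\otimes S(\mathfrak{h}^\perp)$ with $X_i\in\mathfrak{u}\subset\mathfrak{h}^\perp$. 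It then remains to transport the second tensor factor through the Clifford symbol isomorphism $\mathrm{End}(\wedge^\bullet\overline{\mathfrak{u}})\simeq\wedge^\bullet\mathfrak{h}^\perp$: there $e(\xi_i)$ and $\iota(X_i)$ are the Clifford multiplications by the dual bases of $\overline{\mathfrak{u}}$ and $\mathfrak{u}$ spanning $\mathfrak{h}^\perp$, and graded commutation with $e(\xi_i)$ becomes the contraction $\iota(X_i)$ at the level of symbols, the $S(\mathfrak{h})$-factor remaining untouched; this exhibits $T\mapsto\sum_i X_i\otimes[e(\xi_i),T]$ as $\partial_{\mathfrak{u}}$, hence $\mathrm{Gr}(d_{\hat{\dolbeault}}) = 1\otimes\partial_{\mathfrak{u}}$. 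This last identification is precisely the symbol computation of Huang, \pandzic and Renard \cite[Remark 2.3]{HuangPandzicRenard}, spelled out in \cite[Section 3]{HuangPandzicRenard-arxiv}, which I would just recall.

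The only genuinely delicate step is that last one — tracking the signs produced by the super-commutator and fixing the normalization of the Clifford symbol map so that the answer is \emph{exactly} $1\otimes\partial_{\mathfrak{u}}$ and not merely a nonzero multiple of it. Everything else is elementary filtration bookkeeping, and the key structural observation — that the cubic term $v$ disappears in the symbol because it lies in $1\otimes\mathrm{End}(\wedge^\bullet\overline{\mathfrak{u}})$ — is immediate from $[\mathfrak{u},\mathfrak{u}]\subseteq\mathfrak{u}$.
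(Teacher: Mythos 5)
Your proposal is correct and takes essentially the same route as the paper's own proof: there, too, only $\hat{\dolbeault}$ contributes to the symbol (the cubic term sits in $1\otimes\mathrm{End}(\wedge^\bullet\overline{\mathfrak{u}})$ and is lost in the degree shift), the leading term of the graded commutator is $\sum_i X_i\,\mathrm{Gr}(u)\otimes[e(\xi_i),\,\cdot\,]$, and commutation with $e(\xi_i)$ is identified with the contraction $\iota(X_i)$ under $\mathrm{End}(\wedge^\bullet\overline{\mathfrak{u}})\simeq\wedge^\bullet\mathfrak{h}^\perp$, verified on the normal-ordered monomials $w_{IJ}$ via the relations~(\ref{relation-ext}). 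The paper, like you, defers the full sign bookkeeping of this last Clifford-symbol step to Huang--Pand\v{z}i\'c--Renard.
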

\begin{proof}
The algebra $\mathrm{End}(\wedge^\bullet\overline{\mathfrak{u}})$ is the algebra
generated by the creation operators $e(\xi_i)$ and annihilation
operators $\iota(X_i)$. These operators satisfy the relations  :
\begin{equation}\label{relation-ext}
\begin{array}{c}
e(\xi_i)e(\xi_j)+e(\xi_j)e(\xi_i)=0\,,\qquad  
\iota(X_i)\iota(X_j)+\iota(X_j)\iota(X_i)=0\vspace{0.2cm}\\
e(\xi_i)\iota(X_j)+\iota(X_j)e(\xi_i)=\delta_{ij}\,.
\end{array}
\end{equation}
So any element of $\mathrm{End}(\wedge\overline{u})$ is in a unique
way a sum of elements of the form
$$ 
w_{IJ}=e(\xi_{i_1})\circ \cdots e(\xi_{i_k})\circ 
\iota(X_{j_1})\circ \cdots \iota(X_{j_l})
\quad
(i_1<\cdots<i_k\,,j_1<\cdots<j_l)\,.
$$
The identification of $\mathrm{End}(\wedge \overline{\mathfrak{u}})$ with $\wedge \mathfrak{h}^\perp$ sends 
an element of this form to 
$$s(w_{IJ})=\xi_{i_1}\wedge\cdots\wedge \xi_{i_k}
\wedge X_{j_1}\wedge\cdots\wedge X_{j_l} \in 
\wedge\overline{\mathfrak{u}}\otimes 
\wedge\mathfrak{u}\simeq \wedge \mathfrak{h}^\perp\,.$$
So for $u\in \mathcal{U}(\mathfrak{g})$, one has
$$
\mathrm{Gr}(d_{\dolbeault})(\mathrm{Gr}(u\otimes w_{IJ}))=
\sum_i X_i\mathrm{Gr}(u)\otimes s([e(\xi_i),w_{IJ}])
$$
where $[e(\xi_i),w_{IJ}]$ is the graded commutator, and thanks to relation~(\ref{relation-ext})
$$
s([e(\xi_i),w_{IJ}]) = \iota(X_i)s(w_{IJ})
$$
\end{proof}
\begin{lemma}\label{koszul}
The inclusion of $S(\mathfrak{h})\otimes
S(\overline{\mathfrak{u}})\otimes \wedge^\bullet \overline{\mathfrak{u}}$ 
in $S(\mathfrak{h})\otimes
S(\mathfrak{h}^\perp)\otimes \wedge^\bullet \mathfrak{h}^\perp$ induces an isomorphism
$
S(\mathfrak{h})\otimes
S(\overline{\mathfrak{u}})\otimes 
\wedge^\bullet \overline{\mathfrak{u}}
\simeq
H_{1\otimes \partial_{u}}
$.
The inclusion of $S(\mathfrak{h})^H\otimes 1
\otimes 1 \otimes 1$ in
$\left(S(\mathfrak{h})\otimes
S(\mathfrak{h}^\perp)\otimes \wedge^\bullet \mathfrak{h}^\perp\right)^H$ 
induces an isomorphism
in cohomology. 
\end{lemma}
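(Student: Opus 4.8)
The plan is to derive both assertions from the acyclicity of the Koszul complex of a polynomial algebra, together with the $\mathrm{ad}\,\xi_0$-grading of $\mathfrak{g}$. For the first assertion, recall from Lemma~\ref{commut} that the differential in play is $1\otimes\partial_{\mathfrak{u}}$ on the commutative object $S(\mathfrak{h})\otimes S(\mathfrak{h}^\perp)\otimes\wedge^\bullet\mathfrak{h}^\perp$. Using the splitting $\mathfrak{h}^\perp=\mathfrak{u}\oplus\overline{\mathfrak{u}}$ I would rewrite this object as the tensor product
$$
\bigl(S(\mathfrak{u})\otimes\wedge^\bullet\mathfrak{u}\bigr)\otimes\bigl(S(\mathfrak{h})\otimes S(\overline{\mathfrak{u}})\otimes\wedge^\bullet\overline{\mathfrak{u}}\bigr),
$$
and note, directly from the formula for $\partial_{\mathfrak{u}}$ and from the identification $s([e(\xi_i),w_{IJ}])=\iota(X_i)s(w_{IJ})$ established in the proof of Lemma~\ref{commut}, that $1\otimes\partial_{\mathfrak{u}}$ is precisely $\partial\otimes\mathrm{id}$, where $\partial(u\otimes\omega)=\sum_i X_i u\otimes\iota(X_i)\omega$ is the Koszul differential of $S(\mathfrak{u})\otimes\wedge^\bullet\mathfrak{u}$; in particular $\partial_{\mathfrak{u}}$ leaves the $S(\mathfrak{h})$-, $S(\overline{\mathfrak{u}})$- and $\wedge^\bullet\overline{\mathfrak{u}}$-factors untouched. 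Since $X_1,\dots,X_n$ is a regular sequence generating the augmentation ideal of the polynomial algebra $S(\mathfrak{u})$, the Koszul complex $S(\mathfrak{u})\otimes\wedge^\bullet\mathfrak{u}$ is a resolution of $\mathbb{C}$, hence acyclic with $H_0=\mathbb{C}$ concentrated in bidegree $(0,0)$. By the Künneth formula $H_{1\otimes\partial_{\mathfrak{u}}}=\mathbb{C}\otimes\bigl(S(\mathfrak{h})\otimes S(\overline{\mathfrak{u}})\otimes\wedge^\bullet\overline{\mathfrak{u}}\bigr)$, and this cohomology is represented exactly by the subspace $S(\mathfrak{h})\otimes 1\otimes S(\overline{\mathfrak{u}})\otimes 1\otimes\wedge^\bullet\overline{\mathfrak{u}}$ — the cocycles in $S^0(\mathfrak{u})\otimes\wedge^0\mathfrak{u}$, which receive no coboundary — giving the first isomorphism.

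For the second assertion I would first invoke that $H$ is reductive and that $S(\mathfrak{h})\otimes S(\mathfrak{h}^\perp)\otimes\wedge^\bullet\mathfrak{h}^\perp$ is a direct sum of finite dimensional $H$-modules (say by total degree), so that the exact functor of $H$-invariants commutes with the cohomology of the $H$-equivariant differential $1\otimes\partial_{\mathfrak{u}}$. Combined with the first part, this identifies the cohomology of $\bigl(S(\mathfrak{h})\otimes S(\mathfrak{h}^\perp)\otimes\wedge^\bullet\mathfrak{h}^\perp\bigr)^H$ with $\bigl(S(\mathfrak{h})\otimes S(\overline{\mathfrak{u}})\otimes\wedge^\bullet\overline{\mathfrak{u}}\bigr)^H$. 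Now I bring in $\xi_0$: it lies in the centre of $\mathfrak{h}$, hence acts by the scalar $0$ on every $H$-invariant vector, so any $H$-invariant element of $S(\mathfrak{h})\otimes S(\overline{\mathfrak{u}})\otimes\wedge^\bullet\overline{\mathfrak{u}}$ has $\mathrm{ad}\,\xi_0$-weight $0$. Since $S(\mathfrak{h})$ is concentrated in $\mathrm{ad}\,\xi_0$-weight $0$ while $S^{>0}(\overline{\mathfrak{u}})$ and $\wedge^{>0}\overline{\mathfrak{u}}$ lie in strictly negative weights, the $S(\overline{\mathfrak{u}})\otimes\wedge^\bullet\overline{\mathfrak{u}}$-component of such an invariant is forced to be a scalar. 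Hence $\bigl(S(\mathfrak{h})\otimes S(\overline{\mathfrak{u}})\otimes\wedge^\bullet\overline{\mathfrak{u}}\bigr)^H=S(\mathfrak{h})^H\otimes 1\otimes 1$, which is exactly the image of the inclusion in the statement.

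All of this is routine once the set-up is fixed; the one point demanding a little care is the identification of $1\otimes\partial_{\mathfrak{u}}$ with the external tensor product of a Koszul differential and the identity, i.e. checking that $\partial_{\mathfrak{u}}$ genuinely involves only the symmetric and exterior copies of $\mathfrak{u}$ and acts there as the standard acyclic Koszul complex. This is immediate from the explicit form of $s([e(\xi_i),w_{IJ}])$ obtained in Lemma~\ref{commut}, but it is what makes the two spectator factors $S(\overline{\mathfrak{u}})$ and $\wedge^\bullet\overline{\mathfrak{u}}$ survive in cohomology and thus what produces the $\rho(\mathfrak{u})$-shift later on.
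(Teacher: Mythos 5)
Your proof is correct and follows essentially the same route as the paper, which simply refers to Lemma 3.5 of Huang--Pand\v{z}i\'c--Renard and indicates exactly your two ingredients: the first part is the standard acyclicity of the Koszul complex $S(\mathfrak{u})\otimes\wedge^\bullet\mathfrak{u}$ (the other factors being spectators, so K\"unneth applies), and the second part is the $\mathrm{ad}\,\xi_0$-weight argument, since $\xi_0$ is central in $\mathfrak{h}$ and has strictly negative eigenvalues on $\overline{\mathfrak{u}}$, forcing $H$-invariants of the cohomology to lie in $S(\mathfrak{h})^H\otimes 1\otimes 1$. The only point stated a bit loosely is the exactness of the $H$-invariants functor: it holds here because the action comes from the adjoint action, on which the centre of $\mathfrak{h}$ (contained in a Cartan subalgebra) acts semisimply, so each finite-dimensional graded piece is completely reducible.
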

This lemma is proved as lemma 3.5 in \cite{HuangPandzicRenard-arxiv}. 
The first part of this lemma is a 
well known fact on Koszul cohomology.
The second part actually follows from the fact that $\mathfrak{h}$ is the 
centralizer of an element $\xi_0$ (defined in the
introduction) such that $\mathrm{ad}\xi_0$ has positive
eigenvalues on $\mathrm{u}$ and negative eigenvalues on 
$\overline{\mathfrak{u}}$.

Now any element of $S(\mathfrak{h})^H\otimes
1\otimes 1$ is the image of an element  in 
$\delta_{\mathfrak{h}}(\mathcal{Z}(\mathfrak{h}))$.
This is true because for $u\in\mathcal{U}(\mathfrak{h})$ one has 
$\mathrm{Gr}(\delta_{\mathfrak{h}}(u))=
\mathrm{Gr}(u\otimes 1\otimes 1)$.
Note that $\delta_{\mathfrak{h}}(\mathcal{Z}(\mathfrak{h}))$
is contained in $\ker d_{\dolbeault}$. Moreover
$\delta_{\mathfrak{h}}(\mathcal{Z}(\mathfrak{h}))\cap 
\mathrm{im} d_{\dolbeault}=0$  because it is 
true on the
right side of the diagram by lemma \ref{koszul}.

Let $a\in \ker d_{\dolbeault}$. We want to show that there exists 
$u\in \mathcal{Z}(\mathfrak{h})$
and $b\in \mathrm{im} d_{\dolbeault}$ such that
$a=\delta_{\mathfrak{h}}(u)+b$. We proceed by
induction. This is obvious if $a$ has order $0$.
Assume this is true for any operator of order less 
than $p-1$, and let
$a\in \mathcal{U}_p\otimes \mathrm{End}
(\wedge^\bullet\overline{\mathfrak{u}})$
such that $\mathrm{Gr}(a)$ has non vanishing 
cohomology class.
Then $\mathrm{Gr}(a)=s+1\otimes \partial_{\mathfrak{u}} (\mathrm{Gr}(b))$ for $s\in S(\mathfrak{h})^H\otimes 1\otimes 1$
and $b\in \mathcal{U}_{p-1}(\mathfrak{g})\otimes \mathrm{End}
(\wedge^\bullet\overline{\mathfrak{u}})$.
Let $u\in \mathcal{Z}(\mathfrak{h})$ such that $\mathrm{Gr}(\delta_{\mathfrak{h}}(u))=s$.
Then 
$$
d_{\dolbeault}\left(a-\delta_\mathfrak{h}(u)-d_{\dolbeault}(b)\right)=0\,.
$$
Moreover $a-\delta_\mathfrak{h}(u)-
d_{\dolbeault}(b)$ has degree $p-1$. 
By assumption, there exist $u'\in \mathcal{Z}(\mathfrak{h})$ and $b'$ such that
$$
a-\delta_\mathfrak{h}(u)-
d_{\dolbeault}(b)=
\delta_\mathfrak{h}(u')+
d_{\dolbeault}(b')
$$
So that $$a-\delta_\mathfrak{h}(u+u')=
d_{\dolbeault}(b+b')$$
So we have proved the theorem \ref{cohomology-general}.

Now we identify the map $\zeta_{\dolbeault}$.
Recall that thanks to the PBW theorem
\begin{equation}\label{decomposition}
\mathcal{U}(\mathfrak{g})=
\mathcal{U}(\mathfrak{h})\oplus \big(\,
\overline{\mathfrak{u}}\,\mathcal{U}(\mathfrak{g})
+\mathcal{U}(\mathfrak{g}){\mathfrak{u}}\,\big)\,.
\end{equation}
and let $p$ be the projection on the first component.
It is a well known fact that if $z\in\mathcal{Z}(\mathfrak{g})$ then
$p(z)\in\mathcal{Z}(\mathfrak{h})$.
Now if $a\in \mathcal{U}(\mathfrak{g})$,
then $$
\mathrm{Gr}(a\otimes 1 \otimes 1)=
\mathrm{Gr}(p(a)\otimes 1 \otimes 1)=
\mathrm{Gr}(\delta_{\mathfrak{h}}(p(a)))\,.
$$
It follows that if $a\in \ker d_{\dolbeault}$ then $a=\delta_{\mathfrak{h}}(p(a)))$ 
in $H_{d_{\dolbeault}}$.

If $X$ is a $\mathcal{U}(\mathfrak{g})$-module with infinitesimal
character $\chi_\mu$ then
the algebra $H_{d_{\dolbeault}}$ has a representation
in $X\otimes \wedge^\bullet\overline{\mathfrak{u}}$
and for $z\in \mathcal{Z}(\mathfrak{g})$ the action
of $z$ is given by multiplication by $\chi_\mu(z)$. 
So $\delta_{\mathfrak{h}}(p(z))$ acts by the same scalar.
We have proved
\begin{lemma}\label{zeta}
One has
\begin{equation}
(\forall z \in \mathcal{Z}(\mathfrak{h}))\quad
\zeta_{\dolbeault}(z)=p(z)
\end{equation}
\end{lemma}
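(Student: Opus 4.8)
The plan is to identify $\zeta_{\dolbeault}(z)$, for $z\in\mathcal{Z}(\mathfrak{g})$, by running the order induction already used to prove Theorem~\ref{cohomology-general}, but this time recording the canonical $\delta_{\mathfrak{h}}(\mathcal{Z}(\mathfrak{h}))$-representative of a class rather than merely its existence. Recall that $z\otimes I_{\wedge\overline{\mathfrak{u}}}$ is even, central and $H$-invariant, hence lies in $\ker d_{\dolbeault}$, and that $\zeta_{\dolbeault}(z)$ is by definition the unique $u\in\mathcal{Z}(\mathfrak{h})$ with $z\otimes I_{\wedge\overline{\mathfrak{u}}}\equiv\delta_{\mathfrak{h}}(u)$ modulo $\mathrm{im}\,d_{\dolbeault}$, uniqueness coming from $\delta_{\mathfrak{h}}(\mathcal{Z}(\mathfrak{h}))\cap\mathrm{im}\,d_{\dolbeault}=0$. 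Since $p(z)\in\mathcal{Z}(\mathfrak{h})$ by the classical Harish-Chandra projection, it suffices to prove $z\otimes I_{\wedge\overline{\mathfrak{u}}}\equiv\delta_{\mathfrak{h}}(p(z))$ modulo $\mathrm{im}\,d_{\dolbeault}$, i.e.\ that $\delta_{\mathfrak{h}}(p(z))$ is that representative.

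First I would settle the statement on associated graded objects. Applying $\mathrm{Gr}$ to the PBW decomposition~(\ref{decomposition}) gives $S(\mathfrak{g})=S(\mathfrak{h})\oplus\mathfrak{h}^\perp S(\mathfrak{g})$, compatibly with $p$, so the top symbol of $z-p(z)$ is an $H$-invariant element of $\mathfrak{h}^\perp S(\mathfrak{g})$. By $\mathrm{ad}\,\xi_0$-weight considerations — $\mathrm{ad}\,\xi_0$ is positive on $\mathfrak{u}$, negative on $\overline{\mathfrak{u}}$ and zero on $\mathfrak{h}$, and $H$ preserves the corresponding bigrading — any $H$-invariant element of $\mathfrak{h}^\perp S(\mathfrak{g})$ in fact lies in $\mathfrak{u}\,S(\mathfrak{g})$. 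Since $1\otimes\partial_{\mathfrak{u}}$ is precisely the Koszul differential for multiplication by the $X_i\in\mathfrak{u}$ (Lemma~\ref{commut}), one has $\mathfrak{u}\,S(\mathfrak{g})\otimes 1\otimes 1\subset\mathrm{im}(1\otimes\partial_{\mathfrak{u}})$; hence, in the top degree, the image of $\mathrm{Gr}(z\otimes I_{\wedge\overline{\mathfrak{u}}})$ in $H_{1\otimes\partial_{\mathfrak{u}}}$ coincides with that of $\mathrm{Gr}(\delta_{\mathfrak{h}}(p(z)))=\mathrm{Gr}(p(z))\otimes 1\otimes 1$, a class that sits in $S(\mathfrak{h})^H\otimes 1\otimes 1$ by Lemma~\ref{koszul}.

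To descend from the graded statement to the genuine one I would run the induction on order from the proof of Theorem~\ref{cohomology-general}, keeping track at each step of which element of $\mathcal{Z}(\mathfrak{h})$ the $\delta_{\mathfrak{h}}$-representative is and checking that it accumulates exactly into $p(z)$: having matched the top symbol with $\delta_{\mathfrak{h}}(p(z))$, one subtracts it and a suitable $d_{\dolbeault}(b)$, lands on an $H$-invariant element of $\ker d_{\dolbeault}$ of strictly smaller order, and applies the inductive hypothesis; since $p$ respects the filtration, the lower-order corrections are themselves the lower-order part of $p(z)$. Iterating down to order $0$ yields $z\otimes I_{\wedge\overline{\mathfrak{u}}}\equiv\delta_{\mathfrak{h}}(p(z))$ in $H_{d_{\dolbeault}}$, whence $\zeta_{\dolbeault}(z)=p(z)$. (As a consistency check, for any $\mathcal{U}(\mathfrak{g})$-module $X$ with infinitesimal character $\chi_\mu$ both $z\otimes I_{\wedge\overline{\mathfrak{u}}}$ and $\delta_{\mathfrak{h}}(p(z))$ then act on $H_{\dolbeault(X)}$ by the scalar $\chi_\mu(z)$, as they must once $p(z)$ is recognized as the Harish-Chandra projection of $z$.) I expect the main obstacle to be exactly this bookkeeping: when $p$ strictly lowers the filtration degree one must confirm that the surplus top-order symbol of $z\otimes I_{\wedge\overline{\mathfrak{u}}}$ genuinely vanishes in $H_{1\otimes\partial_{\mathfrak{u}}}$ — the $\xi_0$-weight point above — and that the successive lower-order corrections add up to $p(z)$ and not to some competing element of $\mathcal{Z}(\mathfrak{h})$, which in the end is forced by the uniqueness of the $\delta_{\mathfrak{h}}(\mathcal{Z}(\mathfrak{h}))$-representative.
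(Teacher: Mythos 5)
Your overall route is the paper's own: introduce the Harish-Chandra projection $p$ attached to the decomposition~(\ref{decomposition}), note $p(z)\in\mathcal{Z}(\mathfrak{h})$, compare symbols in the (invariant) Koszul cohomology of Lemma~\ref{koszul}, and descend through the filtration as in the proof of Theorem~\ref{cohomology-general}, concluding by uniqueness of the $\delta_{\mathfrak{h}}(\mathcal{Z}(\mathfrak{h}))$-representative. Your $\mathrm{ad}\,\xi_0$-weight argument showing that the top symbol of $z-p(z)$, being $H$-invariant and in $\mathfrak{h}^\perp S(\mathfrak{g})$, lies in $\mathfrak{u}\,S(\mathfrak{g})$ and hence dies under $1\otimes\partial_{\mathfrak{u}}$ is correct and is a legitimate substitute for the fact the paper uses later, namely that $z-p(z)\in\mathcal{U}(\mathfrak{g})\mathfrak{u}$ on the nose (\cite[Lemma 4.123]{KV}, invoked in the proof of Lemma~\ref{mainlemma}).

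The genuine soft spot is exactly the bookkeeping you flag, and your proposed resolution does not close it. After you subtract $\delta_{\mathfrak{h}}(p(z))$ and a suitable $d_{\dolbeault}(b)$ matching the top symbol, the remainder is just some $H$-invariant element of $\ker d_{\dolbeault}$ of lower order; the inductive statement proved in Theorem~\ref{cohomology-general} only yields that it is $\delta_{\mathfrak{h}}(u')+d_{\dolbeault}(b')$ for \emph{some} $u'\in\mathcal{Z}(\mathfrak{h})$, and nothing in your argument shows the symbol class of this remainder vanishes at the next stage, i.e.\ that $u'=0$. The sentence ``since $p$ respects the filtration, the lower-order corrections are themselves the lower-order part of $p(z)$'' is an assertion, not a proof (the remainder is no longer of the form $z'\otimes 1$ with $z'$ central, so $p$ does not even apply to it), and ``forced by the uniqueness of the representative'' is circular: uniqueness identifies $\zeta_{\dolbeault}(z)$ only once you know $z\otimes I\equiv\delta_{\mathfrak{h}}(p(z))$ modulo $\mathrm{im}\,d_{\dolbeault}$, which is precisely what is at stake. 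Two standard ways to finish are available with the paper's tools: either use the exact statement $z-p(z)\in\mathcal{U}(\mathfrak{g})\mathfrak{u}$ so that the entire descent can be run, as in the proof of Lemma~\ref{mainlemma}, inside a subcomplex (elements with a $\mathfrak{u}$-factor on the right or positive $\wedge\mathfrak{u}$-degree) whose Koszul cohomology vanishes, so no $S(\mathfrak{h})^H$-component can ever appear; or identify the representative a posteriori by letting $z\otimes I$ and $\delta_{\mathfrak{h}}(\zeta_{\dolbeault}(z))$ act on the $\mathfrak{u}$-cohomology of finite dimensional modules $F^\lambda$ (where $z$ acts by $\chi_{\lambda+\rho(\mathfrak{g})}(z)$ and, in degree $0$, $p(z)$ acts by the same scalar) and conclude $\zeta_{\dolbeault}(z)=p(z)$ by the density argument the paper uses right after the lemma to identify diagram~(\ref{harishchandra}).
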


\begin{remark}[Casselman-Osborne theorem]
We have proved that the action of $z$ and $\delta_\mathfrak{h}(p(z))$ on 
$H_{\dolbeault_{X,\text{full}}}$
are equal.
\end{remark}
Let us consider the degree $0$ of $H({\mathfrak{u}},X)$
when $X=F^\lambda$ is an irreducible finite dimensional $G$-module
with highest weight $\lambda=\mu-\rho(\mathfrak{g})$.
We have $H^0(\mathfrak{u},X)=E^\lambda$ and this implies
that for $z\in\mathcal{Z}(\mathfrak{h})$, $\delta_\mathfrak{h}(z)$
acts by the scalar $\chi_{\lambda+\rho(\mathfrak{h})}=
\chi_{\mu-\rho(\mathfrak{u})}$. By a standard density argument
it follows that the map $S(\mathfrak{t})^{W_G}\to S(\mathfrak{t})^{W_H}$ 
defined by prescribing the diagram~(\ref{harishchandra}) to be commutative
is given by $\mu\mapsto \tilde{\mu}=\mu+\rho(\mathfrak{u})$.
So we have identified the map $\zeta_{\dolbeault}$
in terms of the Harish-Chandra isomorphism. We have 
proved theorem \ref{infinitesimal}.

\begin{remark}
This also gives a proof of the following 
weak version of a theorem
of Kostant which computes $H_{\dolbeault_{F^\lambda,\text{full}}}$.
Let 
$W_1=\{w\in W_G\,,\,
(\forall \alpha\in \Delta^+(\mathfrak{g},\mathfrak{t}))
\quad w^{-1}\alpha\in\Delta^+(\mathfrak{h},\mathfrak{t})
\}$.
If $E^\nu$ is an irreducible $H$-module with highest weight $\nu$ such
that $\mathrm{Hom}_H(E^\nu,H_{\dolbeault_{F^\lambda,\text{full}}})\neq 0$
then there exists $w\in W_1$ such that
$$ \nu = w(\lambda+\rho(\mathfrak{g}))-\rho(\mathfrak{g})\,.$$
\end{remark}

\section{The Zuckerman translation functor}

\subsection{
 Tensoring with finite dimensional representation}

Let $(F,\pi)$ (resp. $(V,\tau)$) be a finite 
dimensional (resp. smooth) representation of $G$ (resp. $H$).
\begin{proposition}
The map 
$\alpha \colon C^\infty(G,\, V)^H\otimes F \to
C^\infty(G,\, V\otimes F)^H$ given by
$$ 
 \alpha(f\otimes w)(g) = 
 f(g)\otimes \pi(g)^{-1}w \in  
 V\otimes F
$$
is a smooth $G$-module isomorphism.
\end{proposition}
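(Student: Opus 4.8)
The plan is to exhibit an explicit inverse to $\alpha$ and to check equivariance and continuity by direct computation, since the map is given by a concrete formula. First I would write down the candidate inverse
$$
\beta \colon C^\infty(G,\,V\otimes F)^H \to C^\infty(G,\,V)^H\otimes F,
$$
obtained as follows: fix a basis $(w_1,\dots,w_n)$ of $F$ with dual basis $(w_1^*,\dots,w_n^*)$, and for $\phi\in C^\infty(G,\,V\otimes F)^H$ set $\beta(\phi)=\sum_k \big(g\mapsto (I_V\otimes w_k^*)(I_V\otimes\pi(g))\phi(g)\big)\otimes w_k$. One checks that this is independent of the chosen basis, that each component lies in $C^\infty(G,\,V)^H$ (the $H$-invariance of $\phi$ translates into $H$-invariance of each component, using that $\pi$ restricted to $H$ and $\tau$ commute with the pairing in the obvious way), and that $\alpha\circ\beta$ and $\beta\circ\alpha$ are the identity — both are routine once the formulas are in place. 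So $\alpha$ is a bijection, and it is clearly linear and continuous in the smooth Fréchet topology, with continuous inverse since $\beta$ is again given by a smooth-in-$g$ formula with the fixed finite-dimensional datum $\pi$.

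Next I would verify $G$-equivariance. The action on the source is $g_0\cdot(f\otimes w) = (l_{g_0}f)\otimes \pi(g_0)w$ where $(l_{g_0}f)(g)=f(g_0^{-1}g)$, and the action on the target is $(g_0\cdot\psi)(g)=\psi(g_0^{-1}g)$. Then
$$
\alpha(g_0\cdot(f\otimes w))(g)=f(g_0^{-1}g)\otimes\pi(g)^{-1}\pi(g_0)w
= f(g_0^{-1}g)\otimes\pi(g_0^{-1}g)^{-1}w
=\big(\alpha(f\otimes w)\big)(g_0^{-1}g),
$$
which is exactly $(g_0\cdot\alpha(f\otimes w))(g)$. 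So $\alpha$ intertwines the two actions; by linearity this extends from simple tensors to all of $C^\infty(G,\,V)^H\otimes F$ (using that $F$ is finite dimensional, so every element is a finite sum of simple tensors and the algebraic tensor product is all of it).

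The only genuine point requiring care — and the step I expect to be the main (mild) obstacle — is the bookkeeping showing that $\alpha$ lands in the $H$-invariants and that $\beta$ does too: one must check that if $f$ satisfies $f(gh)=\tau(h)^{-1}f(g)$ then $\alpha(f\otimes w)(gh)=(\tau(h)^{-1}\otimes I_F)\alpha(f\otimes w)(g)$, which follows since $\pi(gh)^{-1}w=\pi(h)^{-1}\pi(g)^{-1}w$ and the $F$-factor is absorbed without an $H$-action on the target bundle (the bundle $\mathcal{V}\otimes\mathcal{F}$ over $G/H$ is twisted only by $\tau$, the $F$-part of the fiber being trivialized by $g\mapsto\pi(g)^{-1}$). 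Everything else — bijectivity, continuity of $\alpha$ and $\beta$, compatibility with the smooth structures so that this is an isomorphism of smooth $G$-modules — is then immediate from the explicit formulas.
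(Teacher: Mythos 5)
Your proposal follows essentially the same route as the paper: the inverse $\beta$ you define by pairing $(I_V\otimes\pi(g))\phi(g)$ against a dual basis is exactly the paper's expansion $\phi(g)=\sum_i f_i(g)\otimes\pi(g)^{-1}w_i$, and the equivariance and invariance checks are the same routine computations. One correction: the target invariance condition you state is wrong as written. Since the $H$-module structure on $V\otimes F$ is $\tau\otimes\pi|_H$, what must be checked is $\alpha(f\otimes w)(gh)=\bigl(\tau(h)^{-1}\otimes\pi(h)^{-1}\bigr)\alpha(f\otimes w)(g)$, not $\bigl(\tau(h)^{-1}\otimes I_F\bigr)\alpha(f\otimes w)(g)$; the identity $\pi(gh)^{-1}w=\pi(h)^{-1}\pi(g)^{-1}w$ that you invoke yields precisely the former, so this is a slip in stating the condition rather than a gap in the argument (the remark that the bundle is ``twisted only by $\tau$'' is the conclusion of the proposition, not something to be used in verifying where $\alpha$ lands).
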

\begin{proof}
The map $\alpha$ is clearly equivariant and the 
inverse $\beta$ is given as follows.
Let $(w_i)$ be a basis of $F$. For any $g\in G$, 
the family $(\pi(g)w_i)$ is still
a basis of $G$. If 
$f \in C^\infty (G,\, V\otimes F)^H$, then
there exist functions $f_i$ from $G$ to $V$ 
such that for all $g\in G$,
$f(g)=\sum_i \pi(g)^{-1}w_i\otimes f_i(g)$. 
One checks easily that the functions $f_i$
are $H$-invariant. We then set 
$\beta (f) =\sum f_i\otimes w_i$. The map $\beta$
is well defined, equivariant and does not depend 
on the basis $w_i$.
\end{proof}
We now relate the different differential operators on
the bundles in consideration. 
Recall that the algebraic operator 
$D_F\otimes I_E$ has been defined in equation~(\ref{algebraic}).
\begin{proposition}\label{leibniz}
Let $D\in(\mathfrak{h}^\perp\otimes
\mathrm{End}(V)^H$ be an abstract
operator of order $1$ with vanishing order $0$ part.
One has 
$$ 
\alpha \circ (D(E)\otimes I_F) = 
\Big( 
 D(E\otimes F) + 1\otimes (D_F\otimes I_E)
\Big) 
\circ \alpha\,. 
$$
\end{proposition}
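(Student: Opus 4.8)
The plan is to compute both sides by unwinding the definitions of the representation $\varphi_E$ on sections and of the algebraic operator $D_F$, and then to identify the discrepancy with the extra term $1\otimes(D_F\otimes I_E)$. Write $D=\sum_k X_k\otimes T_k$ with $X_k\in\mathfrak h^\perp$ and $T_k\in\mathrm{End}(V)$; since $D$ has vanishing order $0$ part we only need to track the first-order term. First I would evaluate $\alpha\circ(D(E)\otimes I_F)$ on a section $f\otimes w\in C^\infty(G,V\otimes E)^H\otimes F$. Here $D(E)$ acts via $f\mapsto \sum_k T_k\otimes I_E\big(r(X_k)f\big)$, and then $\alpha$ twists the $F$-factor by $\pi(g)^{-1}$, so at a point $g$ one gets $\sum_k (T_k\otimes I_E)\big(r(X_k)f(g)\big)\otimes \pi(g)^{-1}w$.

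Next I would compute the right-hand side on the same element, i.e.\ evaluate $\big(D(E\otimes F)+1\otimes(D_F\otimes I_E)\big)$ on $\alpha(f\otimes w)(g)=f(g)\otimes\pi(g)^{-1}w$. The key computation is the right-invariant derivative $r(X_k)$ applied to $g\mapsto \pi(g)^{-1}w$: by the product rule (Leibniz rule for $r(X_k)$ acting on a product of $G$-valued functions), $r(X_k)\big(f(g)\otimes\pi(g)^{-1}w\big)$ splits as $\big(r(X_k)f(g)\big)\otimes\pi(g)^{-1}w + f(g)\otimes r(X_k)\big(\pi(g)^{-1}w\big)$, and $r(X_k)\big(\pi(g)^{-1}w\big)=-\pi(g)^{-1}\pi(X_k)w$. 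The first summand, after applying $T_k$ on the $V$-factor and summing over $k$, reproduces exactly the left-hand side computed above; the second summand is $-\sum_k f(g)\otimes \pi(g)^{-1}\pi(X_k)w$ with $T_k$ applied to $V$, which is precisely $\alpha$ applied to $\big(1\otimes(D_F\otimes I_E)\big)(f\otimes w)$ once one recalls from \eqref{algebraic} that $D_F\otimes I_E=\pi\otimes c\otimes I_E(D)=\sum_k \pi(X_k)\otimes (T_k$ acting on $V)\otimes I_E$. Matching signs, the $-\pi(X_k)w$ term cancels against the extra term on the right, leaving the identity.

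The main obstacle — really the only point requiring care — is the bookkeeping of which tensor factor each operator acts on and getting the sign in $r(X_k)(\pi(g)^{-1}w)=-\pi(g)^{-1}\pi(X_k)w$ consistent with the sign convention in \eqref{algebraic} and with the $\sharp$-twist implicit in passing between left and right invariant vector fields; the hypothesis $X_k\in\mathfrak h^\perp$ guarantees that the order-$0$ correction terms coming from the $H$-invariance (the ideal $J$) do not interfere, so that the computation done on $C^\infty(G,-)$ descends cleanly to the quotient by $J^H$. Everything else is the Leibniz rule for $r(X)$, which holds because $r(X)$ is a derivation, and the fact that $\alpha$ is linear and only modifies the $F$-factor pointwise. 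I would present the proof as this single chain of equalities evaluated at an arbitrary $g\in G$ on a generic element $f\otimes w$.
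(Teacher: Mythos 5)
Your overall strategy is the same as the paper's (its computation~\eqref{calculX}): evaluate both sides on $f\otimes w$ at a point $g$ and apply the product rule to $r(X_k)$ acting on $\alpha(f\otimes w)(g)=f(g)\otimes\pi(g)^{-1}w$. But the one step you yourself single out as the only delicate point is done incorrectly. For $h(g)=\pi(g)^{-1}w$ one has
$$
\big(r(X_k)h\big)(g)=\Big[\tfrac{d}{dt}\,\pi\big(g\exp(tX_k)\big)^{-1}w\Big]_{t=0}
=\Big[\tfrac{d}{dt}\,\pi\big(\exp(-tX_k)\big)\pi(g)^{-1}w\Big]_{t=0}
=-\pi(X_k)\pi(g)^{-1}w\,,
$$
not $-\pi(g)^{-1}\pi(X_k)w$ as you wrote: the formula you used is the one for the other-side derivative $l(X_k)$ (differentiation by left translation), not for $r(X_k)$ as defined in the paper.

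This is not harmless, because you then identify the resulting second summand, $-\sum_k (T_k\otimes I_E)f(g)\otimes\pi(g)^{-1}\pi(X_k)w$, with $-\alpha\big((1\otimes(D_F\otimes I_E))(f\otimes w)\big)(g)$, i.e.\ with the correction term \emph{pre}-composed with $\alpha$, whereas the statement has $\big(D(E\otimes F)+1\otimes(D_F\otimes I_E)\big)\circ\alpha$, so the correction is $D_F\otimes I_E$ applied pointwise to the already twisted value $f(g)\otimes\pi(g)^{-1}w$, giving $\sum_k(T_k\otimes I_E)f(g)\otimes\pi(X_k)\pi(g)^{-1}w$. Since $D=\sum_k X_k\otimes T_k$ is only $H$-invariant, $\alpha$ does not intertwine the fibrewise action of $D_F\otimes I_E$: the two expressions differ by $\sum_k T_k\otimes\big(\pi(\mathrm{Ad}(g^{-1})X_k)-\pi(X_k)\big)$, which vanishes for all $g$ only if $D$ were $G$-invariant (false for $\hat{\dolbeault}$). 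So, taken literally, your chain of equalities establishes $\alpha\circ(D(E)\otimes I_F)=D(E\otimes F)\circ\alpha+\alpha\circ(1\otimes(D_F\otimes I_E))$, which is a different and in general false identity, and the two slips conspire to hide each other. The repair is immediate and recovers the paper's proof: with the correct derivative the second summand is exactly $-(1\otimes(D_F\otimes I_E))\alpha(f\otimes w)(g)$, and the stated identity follows. The surrounding remarks (reduction to a representative $\sum_k X_k\otimes T_k$ with $X_k\in\mathfrak{h}^\perp$, no interference from the ideal $J$, linearity of $\alpha$) are fine.
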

\begin{proof}
Let $X\in \mathfrak{h}^\perp$,
$f \in C^\infty (G,\, V\otimes E)$
We have 
\begin{align}
(r(X)\otimes 1)\alpha(f\otimes w)(g) 
&= \left[\frac{d}{dt}
 f(g\exp(tX)) \otimes \pi(\exp(-tX)g^{-1})w\right]_{t=0} \notag \\
&=  r(X)f(g)\otimes g^{-1}w  - f(g)\otimes \pi(X)\pi(g)^{-1}w
 \notag \\
&= \alpha(r(X)f\otimes w)(g) 
  -(1\otimes (I_{\wedge^\bullet\overline{\mathfrak{u}}}\otimes I_E\otimes \pi(X)))
  \alpha(f\otimes w)(g)\label{calculX}
\end{align} 
The proposition follows.
\end{proof}
In our case, this formula reads
$$
\alpha \circ (\dolbeault(E)\otimes I_F)  \circ \beta = 
 \dolbeault(E\otimes F) + 1\otimes (
 \hat{\dolbeault}_F\otimes I_E) =
 \hat{\dolbeault}(E\otimes F) + 1\otimes (
 \dolbeault_F\otimes I_E)
\,.
$$
We will need later an algebraic version of 
proposition \ref{leibniz} that does
not make use of the maps $\alpha$ and $\beta$ in the 
definition of the operator 
$\alpha (D(E)\otimes I) \beta$.
Actually, using  the embedding 
$F\hookrightarrow C^\infty(G,\, F_{|H})^H$ 
(given by $w\mapsto(g\mapsto \rho_w(g)=g^{-1}w)$) 
one sees that $C^\infty(G,\,V\otimes E)^H\otimes F$ 
is a subspace of the space of sections of the 
fibre bundle on $G/H$ obtained by
restriction to the diagonal of $G/H \times G/H$ of 
the bundle 
$(\mathcal{S}\otimes \mathcal{E})\boxtimes\mathcal{F}$. 
So one expects that
the Leibniz rule used in the preceding proposition 
has a formulation in terms
of the coproduct in $\mathcal{U}(\mathfrak{g})$.

Let $\Delta$ be the coproduct in $\mathcal{U}(\mathfrak{g})$
and $(F,\pi)$ a finite dimensional representation of $G$ as before.
We define $\Delta_F= (I\otimes \pi)\circ \Delta$. Hence
\begin{equation} \label{delta_F} 
\Delta_F \colon \mathcal{U}(\mathfrak{g}) \to  
\mathcal{U}(\mathfrak{g})\otimes \mathrm{End}(F)\,.
\end{equation}
\begin{proposition}
The linear map 
$$
\Delta_F\otimes I \colon  
\mathcal{U}(\mathfrak{g})\otimes \mathrm{End}(V\otimes E) \to
\mathcal{U}(\mathfrak{g})\otimes 
\mathrm{End}(V \otimes E\otimes F)
$$
induces an algebra homomorphism
$$ 
{{\Delta}_F} \colon
\mathbb{D}_G(\mathcal{S}\otimes\mathcal{E}) 
\mathop{\longrightarrow}
\mathbb{D}_G(\mathcal{S}\otimes\mathcal{E}\otimes \mathcal{F})
$$
\end{proposition}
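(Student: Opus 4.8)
The plan is to work through the algebraic incarnation of $\mathbb{D}_G(\mathcal{V})$, namely the isomorphism $\mathbb{D}_G(\mathcal{V})\cong\bigl((\mathcal{U}(\mathfrak{g})\otimes\mathrm{End}(V))^H\bigr)/J^H$ supplied by the proposition of \cite{KoranyiReimann} together with Lemma \ref{generators}, and to check that $\Delta_F\otimes I$ is an $H$-equivariant algebra homomorphism on $\mathcal{U}(\mathfrak{g})\otimes\mathrm{End}(\cdot)$ that carries the defining left ideal into the defining left ideal. Write $W=V\otimes E$ and $W'=V\otimes E\otimes F$, let $\rho$ (resp. $\rho'$) be the $\mathfrak{h}$-action on $W$ (resp. on $W'=W\otimes F$), and let $J\subset\mathcal{U}(\mathfrak{g})\otimes\mathrm{End}(W)$, $J'\subset\mathcal{U}(\mathfrak{g})\otimes\mathrm{End}(W')$ be the ideals of Section 1. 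Since $F$ is finite dimensional we identify $\mathrm{End}(F)\otimes\mathrm{End}(W)\cong\mathrm{End}(W\otimes F)=\mathrm{End}(W')$ by the obvious reordering of tensor factors, and under this identification regard $\Delta_F\otimes I$ as a map $\mathcal{U}(\mathfrak{g})\otimes\mathrm{End}(W)\to\mathcal{U}(\mathfrak{g})\otimes\mathrm{End}(W')$.

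First I would record the formal properties. The coproduct $\Delta\colon\mathcal{U}(\mathfrak{g})\to\mathcal{U}(\mathfrak{g})\otimes\mathcal{U}(\mathfrak{g})$ is an algebra homomorphism, and so is $\pi\colon\mathcal{U}(\mathfrak{g})\to\mathrm{End}(F)$; hence $\Delta_F=(I\otimes\pi)\circ\Delta$ is an algebra homomorphism, and therefore so is $\Delta_F\otimes I$ on $\mathcal{U}(\mathfrak{g})\otimes\mathrm{End}(W)$. Moreover $\Delta$ is $\mathrm{Ad}(G)$-equivariant, $\pi(\mathrm{Ad}(h)u)=\pi(h)\pi(u)\pi(h)^{-1}$ for $h\in H\subset G$, and the $H$-action on $\mathrm{End}(W')$ is conjugation by the $H$-representation on $W'=W\otimes F$, which is the tensor product of those on $W$ and on $F$; combining these, $\Delta_F\otimes I$ intertwines the $H$-actions, and in particular restricts to an algebra homomorphism $(\mathcal{U}(\mathfrak{g})\otimes\mathrm{End}(W))^H\to(\mathcal{U}(\mathfrak{g})\otimes\mathrm{End}(W'))^H$.

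The crux is that $\Delta_F\otimes I$ sends $J$ into $J'$. By Lemma \ref{generators} the left ideal $J$ is generated by the elements $Y\otimes I_W+1\otimes\rho(Y)$, $Y\in\mathfrak{h}$, so it is enough to evaluate $\Delta_F\otimes I$ on these. As $Y$ is primitive, $\Delta_F(Y)=Y\otimes I_F+1\otimes\pi(Y)$, whence, after the reordering above,
$$
(\Delta_F\otimes I)\bigl(Y\otimes I_W\bigr)=Y\otimes I_{W'}+1\otimes\bigl(I_W\otimes\pi(Y)\bigr),\qquad
(\Delta_F\otimes I)\bigl(1\otimes\rho(Y)\bigr)=1\otimes\bigl(\rho(Y)\otimes I_F\bigr).
$$
Adding, and using that $\rho'(Y)=\rho(Y)\otimes I_F+I_W\otimes\pi(Y)$ is exactly the $\mathfrak{h}$-action on $W'=W\otimes F$, one obtains $Y\otimes I_{W'}+1\otimes\rho'(Y)$, which is a generator of $J'$. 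Since $\Delta_F\otimes I$ is a unital algebra homomorphism and $J$ is the left ideal generated by the above elements, $(\Delta_F\otimes I)(J)\subseteq J'$, and combined with $H$-equivariance this yields $(\Delta_F\otimes I)(J^H)\subseteq{J'}^H$.

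Finally I would assemble the pieces: $\Delta_F\otimes I$ is an algebra homomorphism $(\mathcal{U}(\mathfrak{g})\otimes\mathrm{End}(W))^H\to(\mathcal{U}(\mathfrak{g})\otimes\mathrm{End}(W'))^H$ carrying the two-sided ideal $J^H$ (part 2 of Lemma \ref{generators}) into the two-sided ideal ${J'}^H$, so it descends to an algebra homomorphism between the quotients, which via \cite{KoranyiReimann} is precisely the asserted map $\Delta_F\colon\mathbb{D}_G(\mathcal{S}\otimes\mathcal{E})\to\mathbb{D}_G(\mathcal{S}\otimes\mathcal{E}\otimes\mathcal{F})$. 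I expect the only genuine subtlety to be the bookkeeping of the identification $\mathrm{End}(F)\otimes\mathrm{End}(W)\cong\mathrm{End}(W\otimes F)$ and checking that the coproduct Leibniz computation on the generators lands on the generators of $J'$ rather than on something larger — this is exactly what forces the map to take values in $\mathbb{D}_G(\mathcal{S}\otimes\mathcal{E}\otimes\mathcal{F})$; the algebra-homomorphism and $H$-equivariance properties are then formal.
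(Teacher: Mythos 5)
Your argument is correct and follows essentially the same route as the paper: reduce to the generators $Y\otimes I+1\otimes\tau(Y)$, $Y\in\mathfrak{h}$, via Lemma \ref{generators}, and use the primitivity of $Y$ (so $\Delta_F(Y)=Y\otimes I_F+1\otimes\pi(Y)$) to see that their images are generators of the ideal for $V\otimes E\otimes F$. The paper leaves the algebra-homomorphism and $H$-equivariance properties implicit, which you spell out, but the substance of the proof is the same.
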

\begin{proof}
We have to prove that the map $\Delta_F\otimes 1$ sends the ideal 
$J_{V\otimes E}$ in the definition of 
$\mathcal{U}(\mathfrak{g})
\otimes_{\mathcal{U}(\mathfrak{h})}
\mathrm{End}(V\otimes E)$ to the 
ideal $J_{V\otimes E\otimes F}$ in the definition
of the corresponding quotient.  By lemma \ref{generators} 
it is enough to consider the elements 
$Y\otimes I_{V\otimes E} + 1 \otimes Y$ for $Y\in \mathfrak{h}$.
But 
\begin{align*} 
\Delta_F\otimes I(Y\otimes I_{V\otimes E} &+ 1 \otimes (c\otimes \sigma(Y)))\\
&=
Y\otimes I_{V\otimes E\otimes F} +  
1\otimes \pi(Y)\otimes I_{V\otimes E} + 
1 \otimes (c\otimes \sigma(Y)) \\
&=
Y\otimes I_{V\otimes E\otimes F} +  1\otimes (\pi\otimes c\otimes \sigma (Y))
\end{align*}
It follows that 
$\Delta_F(Y\otimes I_{V\otimes E} + 1 \otimes Y)\in J_{V\otimes E\otimes F}$.
\end{proof}
\begin{proposition}
One has ${\Delta}_F(D(E))=\alpha(D(E)\otimes I)\beta$
\end{proposition}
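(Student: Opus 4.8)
The plan is to deduce the identity from the Leibniz rule already established, after rephrasing it on the full function spaces (dropping $H$-invariance) and in the ambient algebra. The proposition on $\alpha$ and $\beta$ extends verbatim to mutually inverse linear isomorphisms between $C^\infty(G,\,V\otimes E)\otimes F$ and $C^\infty(G,\,V\otimes E\otimes F)$, restricting to the maps of the statement on the subspaces of $H$-invariant sections; moreover, for a representative $\tilde D\in\mathcal U(\mathfrak g)\otimes\mathrm{End}(V\otimes E)$ of $D(E)$, the operator $D(E)$ is the restriction to $H$-invariant sections of $(r\otimes c)(\tilde D)$, and $\Delta_F(D(E))$ is, by its very definition, the restriction of $(r\otimes c)\big((\Delta_F\otimes I)(\tilde D)\big)$ acting on $C^\infty(G,\,V\otimes E\otimes F)$. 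Hence it suffices to prove
$$
\alpha\big((r\otimes c)(A)\otimes I_F\big)\beta\ =\ (r\otimes c)\big((\Delta_F\otimes I)(A)\big)
$$
as operators on $C^\infty(G,\,V\otimes E\otimes F)$ for every $A\in\mathcal U(\mathfrak g)\otimes\mathrm{End}(V\otimes E)$, and then to set $A=\tilde D$ and restrict.

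Both sides of this identity are algebra homomorphisms in $A$. The right-hand side is the composite of $\Delta_F\otimes I$ — an algebra homomorphism, $\Delta_F=(I\otimes\pi)\circ\Delta$ being a coproduct followed by a representation — with the representation $r\otimes c$; the left-hand side is the composite of $r\otimes c$, of $T\mapsto T\otimes I_F$, and of conjugation by the linear isomorphism $\alpha$, each of which is an algebra homomorphism. It is therefore enough to check the identity on a set of algebra generators of $\mathcal U(\mathfrak g)\otimes\mathrm{End}(V\otimes E)$, and I take $\mathfrak g\otimes I$ together with $1\otimes\mathrm{End}(V\otimes E)$. On $A=1\otimes S$ both sides are fibrewise multiplication by $S$: unwinding $\beta$, applying $S\otimes I_F$ and reapplying $\alpha$ gives $S\otimes I_F$, and $(\Delta_F\otimes I)(1\otimes S)=1\otimes\pi(1)\otimes S$ represents the same operator. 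On $A=X\otimes I$ with $X\in\mathfrak g$, the identity reads
$$
\alpha\big(r(X)\otimes I_F\big)\beta\ =\ r(X)+\big(I_{V\otimes E}\otimes\pi(X)\big)\,,
$$
which is exactly the computation in equation~\eqref{calculX} — valid for every $X\in\mathfrak g$, the hypothesis $X\in\mathfrak h^\perp$ of proposition~\ref{leibniz} playing no role here — while the right-hand side equals $(r\otimes c)\big((\Delta_F\otimes I)(X\otimes I)\big)$ because $(\Delta_F\otimes I)(X\otimes I)=X\otimes I_F\otimes I+1\otimes\pi(X)\otimes I$. This establishes the displayed identity for all $A$, hence for $A=\tilde D$, and restriction to $H$-invariant sections yields ${\Delta}_F(D(E))=\alpha(D(E)\otimes I)\beta$.

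The essential point — and the only step requiring care — is the decision to argue upstairs in $\mathcal U(\mathfrak g)\otimes\mathrm{End}(V\otimes E)$, where elements of order $\le1$ generate, rather than downstairs in $\mathbb D_G(\mathcal S\otimes\mathcal E)$, which need not be generated by operators of order $\le1$; granting that, the reduction to generators is routine and the verification is supplied by equation~\eqref{calculX}. One could instead evaluate both sides directly on a section $\tilde f$, writing $\beta\tilde f=\sum_i f_i\otimes w_i$ and iterating equation~\eqref{calculX}; that route forces one to carry a Sweedler decomposition and to use cocommutativity of $\mathcal U(\mathfrak g)$ together with the antipode and counit axioms to see that the correction terms collapse to the identity, so I favour the generator argument above.
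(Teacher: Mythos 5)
Your proof is correct and rests on the same key computation as the paper's, namely the evaluation in equation~(\ref{calculX}) of $r(X)$ against $\alpha(f\otimes w)$, which is exactly how the paper verifies $\Delta_F\otimes I(X\otimes I)\,\alpha(f\otimes w)=\alpha(r(X)f\otimes w)$. The only difference is the packaging: you wrap this check in a generators-and-homomorphisms reduction upstairs in $\mathcal{U}(\mathfrak{g})\otimes\mathrm{End}(V\otimes E)$, so your identity holds for abstract operators of arbitrary order, whereas the paper checks it directly on first-order elements $X\in\mathfrak{h}^\perp$, which is all it needs for $\dolbeault$.
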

\begin{proof}
Let us compute ${\Delta}_F\otimes I(X\otimes 
I_{V\otimes E\otimes F})$ on $\alpha(f\otimes w)$
for $X\in \mathfrak{h}^\perp$.
We have
\begin{align*}
{\Delta}_F\otimes I(X\otimes 
&I_{V\otimes E\otimes F})\alpha(f\otimes w)\\
&= (\Delta_F(X)\otimes I)\alpha(f\otimes w) \\
&=(r(X)\otimes I_{V\otimes E\otimes F}
   +1\otimes I_{V\otimes E}\otimes \pi(X))\alpha(f\otimes w) \\
&=-(1\otimes I_{V \otimes E}\otimes \pi(X))\alpha(f\otimes w)
   +\alpha((r(X)f\otimes w)\\
&\phantom{=-(1\otimes I_{V \otimes E}\otimes \pi(X))\alpha(f\otimes w)+\alpha}
 +(1\otimes I_{V\otimes E}\otimes \pi(X))\alpha (f\otimes w)\\
&=\alpha((r(X))f\otimes w)
\end{align*} 
by the computation~(\ref{calculX}).
The proposition follows.
\end{proof}
Summarising this discussion we have obtained 
the following result.
\begin{lemma} 
Let $D$ be an abstract differential operator
of order $1$ with vanishing order $0$ part.
Then for any  finite dimensional representation 
$(F,\pi)$
of $G$ and smooth representation $E$ of $H$
the operators ${\Delta}_F(D(E))$, $ D(E\otimes F)$
and $D_F\otimes I_E$ are related by the following formula :
$$
{\Delta}_F(D(E))=D(E\otimes F)+1\otimes
(D_F\otimes I_E) \,.
$$
\end{lemma}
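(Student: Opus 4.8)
The plan is to obtain the identity as a purely formal consequence of the two propositions that immediately precede it. First I would recall that the last proposition identifies $\Delta_F(D(E))$, viewed as an operator on $C^\infty(G,V\otimes E)^H\otimes F$, with the conjugated operator $\alpha\circ(D(E)\otimes I)\circ\beta$, and that Proposition~\ref{leibniz} gives
$$
\alpha\circ(D(E)\otimes I_F)=\bigl(D(E\otimes F)+1\otimes(D_F\otimes I_E)\bigr)\circ\alpha\,.
$$
Composing this on the right with $\beta$ and using that $\beta$ is the two-sided inverse of $\alpha$ — established when $\alpha$ was shown to be a smooth $G$-module isomorphism — yields $\alpha\circ(D(E)\otimes I_F)\circ\beta=D(E\otimes F)+1\otimes(D_F\otimes I_E)$, and the left-hand side equals $\Delta_F(D(E))$ by the other proposition. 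That is exactly the asserted formula.

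The only point requiring a word of care is that Proposition~\ref{leibniz} and the identification of $\Delta_F(D(E))$ were checked on abstract operators of the elementary form $X\otimes T$ with $X\in\mathfrak{h}^\perp$. For a general abstract operator $D$ of order $1$ with vanishing order-zero part I would reduce to this case by linearity: pick a representative of $D$ whose leading symbol sits in $\mathfrak{h}^\perp\otimes\mathrm{End}(V)$, expand it as a finite sum $\sum_k X_k\otimes T_k$ with $X_k\in\mathfrak{h}^\perp$, and observe that each of $D\mapsto D(E\otimes F)$, $D\mapsto D_F$ and $D\mapsto\Delta_F(D(E))$ is linear in $D$. The hypothesis on the order of $D$ is precisely what permits the use of Proposition~\ref{leibniz}. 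In particular $\hat{\dolbeault}$, and hence $\dolbeault$, satisfies it, so the lemma applies to the Dolbeault operator and recovers the identity displayed right after Proposition~\ref{leibniz}.

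I do not expect any genuine obstacle here: the substantive analytic and algebraic work — the Leibniz computation~\eqref{calculX} behind Proposition~\ref{leibniz} and the compatibility of $\Delta_F\otimes I$ with the defining ideals — has already been done. The present lemma merely records the outcome in a form free of $\alpha$ and $\beta$, which is the version that will be convenient for the subsequent treatment of the Zuckerman translation functor. If anything needs extra attention it is keeping the various identity operators $I_F$, $I_E$, $I_{\wedge^\bullet\overline{\mathfrak{u}}}$ correctly placed, but this is bookkeeping rather than mathematics.
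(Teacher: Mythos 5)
Your argument is exactly the paper's: the lemma is stated there as a summary of the preceding discussion, namely combining the identity $\Delta_F(D(E))=\alpha\circ(D(E)\otimes I)\circ\beta$ from the proposition just before with Proposition~\ref{leibniz}, using that $\beta=\alpha^{-1}$. Your extra remark on reducing a general first-order operator with vanishing order-zero part to elementary tensors $X\otimes T$, $X\in\mathfrak{h}^\perp$, by linearity is a harmless (and slightly more explicit) bookkeeping step that the paper leaves implicit.
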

As before we then have
$$
{\Delta}_F(\dolbeault(E))=
\dolbeault(E\otimes F) + 1\otimes (
 \hat{\dolbeault}_F\otimes I_E) =
 \hat{\dolbeault}(E\otimes F) + 1\otimes (
 \dolbeault_F\otimes I_E)
$$
\subsection{Proof of theorem \ref{mainthm-2}}
We now prove theorem \ref{mainthm-2}.
Barchini proposed it as exercise (b) of lecture 2 in 
\cite{barchini:ias:park-city} but it is perhaps not
as obvious as expected. The isomorphism $\alpha$ satisfies
$$ 
\Delta_F(\dolbeault(E))\circ \alpha = 
\alpha \circ (\dolbeault(E)\otimes I_F)\,.
$$
In particular, it induces an isomorphism at the level of cohomology.
\begin{proposition}\label{alpha}
The linear map $\alpha$ induces a $G$-module
isomorphism
\begin{equation*}
{\alpha}\colon H_{{\dolbeault}(E)}\otimes F 
\mathop{\longrightarrow}\limits^\sim
H_{\Delta_F({\dolbeault}(E))}\,,
\end{equation*}
whose inverse is induced by $\beta=\alpha^{-1}$.
\end{proposition}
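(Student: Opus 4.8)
The plan is to deduce the proposition directly from two facts already in hand: the proposition that $\alpha\colon C^\infty(G,\,V)^H\otimes F\to C^\infty(G,\,V\otimes F)^H$ is an isomorphism of smooth $G$-modules with inverse $\beta$, and the intertwining relation
$$
\Delta_F(\dolbeault(E))\circ\alpha=\alpha\circ(\dolbeault(E)\otimes I_F)
$$
displayed just above the statement. Taking $V=\wedge^\bullet\overline{\mathfrak{u}}\otimes E$, these two facts say exactly that $\alpha$ is an isomorphism of cochain complexes from $\big(C^\infty(G,\,\wedge^\bullet\overline{\mathfrak{u}}\otimes E)^H\otimes F,\ \dolbeault(E)\otimes I_F\big)$ onto $\big(C^\infty(G,\,\wedge^\bullet\overline{\mathfrak{u}}\otimes E\otimes F)^H,\ \Delta_F(\dolbeault(E))\big)$. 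In particular $\Delta_F(\dolbeault(E))^2=\alpha\,\big((\dolbeault(E)\otimes I_F)^2\big)\,\beta=\alpha\,(\dolbeault(E)^2\otimes I_F)\,\beta=0$, so the right-hand cohomology is well defined, and any isomorphism of complexes induces an isomorphism on cohomology. Since $\alpha$ and $\beta$ are mutually inverse and $G$-equivariant and the differentials are $G$-invariant, the induced maps on cohomology are mutually inverse $G$-module isomorphisms, which is the assertion.

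The only point deserving comment is the identification of the source cohomology with $H_{\dolbeault(E)}\otimes F$. Here one uses that $F$ is finite dimensional: fixing a basis $(w_i)$ of $F$, one writes any element of $C^\infty(G,\,\wedge^\bullet\overline{\mathfrak{u}}\otimes E)^H\otimes F$ uniquely as $\sum_i f_i\otimes w_i$, the operator $\dolbeault(E)\otimes I_F$ acts componentwise as $\sum_i \dolbeault(E)f_i\otimes w_i$, hence $\ker(\dolbeault(E)\otimes I_F)=\ker\dolbeault(E)\otimes F$ and $\mathrm{im}(\dolbeault(E)\otimes I_F)=\mathrm{im}\,\dolbeault(E)\otimes F$, so the quotient is canonically $H_{\dolbeault(E)}\otimes F$ as a $G$-module. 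This is precisely the book-keeping already carried out for $\beta$ in the proof that $\alpha$ is an isomorphism, so it needs no repetition. Transporting it through $\alpha$ produces the map $H_{\dolbeault(E)}\otimes F\to H_{\Delta_F(\dolbeault(E))}$ of the statement, and transporting it back through $\beta$ produces its inverse.

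I do not expect a real obstacle here: the substance is contained in the two cited results, and the remaining ingredient — that a finite-dimensional tensor factor passes through cohomology — is immediate. The only mildly delicate feature is topological, and it is handled for free: $\dolbeault(E)$ and $\Delta_F(\dolbeault(E))$ are continuous on the Fréchet section spaces, so their kernels are closed Fréchet subspaces carrying smooth $G$-actions, and $\alpha$ is a topological $G$-isomorphism respecting all of this. Crucially, none of this invokes the closed-range property of $\dolbeault(E)$, which is exactly what we are trying to avoid.
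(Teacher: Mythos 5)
Your proof is correct and follows essentially the same route as the paper: the paper also deduces the proposition directly from the identity $\Delta_F(\dolbeault(E))\circ\alpha=\alpha\circ(\dolbeault(E)\otimes I_F)$ together with the fact that $\alpha$ is a $G$-isomorphism with inverse $\beta$, treating the passage to cohomology as immediate. Your additional book-keeping (identifying the cohomology of $\dolbeault(E)\otimes I_F$ with $H_{\dolbeault(E)}\otimes F$ using finite-dimensionality of $F$) is a harmless elaboration of what the paper leaves implicit.
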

 
On the other side, thanks to equation~(\ref{exact}),
the map $p^H_{\mu,\nu}$ induces
$$ 
p^G_{\mu,\nu}\colon
H_{\dolbeault(E(\mu)\otimes F^\nu)}
\longrightarrow
H_{\dolbeault(E((\mu)\otimes E^\nu))}
$$
But a priori
$
H_{\Delta_{F^\nu}(\dolbeault(E(\mu)))}
$
and
$
H_{\dolbeault(E(\mu)\otimes F^\nu)}
$
may be different. 

The restriction to $H$ of the representation $F^\nu$
contains a finite dimensional irreducible representation $E^\nu$
with highest weight $\nu$. So as an $H$-module, one has
$F^\nu=E^\nu\oplus E'$.
Let us consider the bundle 
$\wedge^\bullet \overline{\mathfrak{u}} \otimes \mathcal{E}(\mu)\otimes \mathcal{F}^\nu$.
We decompose it as follows
\begin{equation}\label{decomp}
\wedge^\bullet \overline{\mathfrak{u}} \otimes \mathcal{E}(\mu)\otimes \mathcal{F}^\nu=
\left(
 \wedge^\bullet \overline{\mathfrak{u}} \otimes \mathcal{E}(\mu)\otimes \mathcal{E}^\nu
\right)
\oplus 
\left(
 \wedge^\bullet \overline{\mathfrak{u}} \otimes \mathcal{E}(\mu)\otimes \mathcal{E}'
\right)\,.
\end{equation}
Note that 
$\wedge^\bullet \overline{\mathfrak{u}} \otimes \mathcal{E}({\mu+\nu}) \subset
\wedge^\bullet \overline{\mathfrak{u}} \otimes \mathcal{E}(\mu)\otimes \mathcal{E}^\nu$ and that $\nu$ is not a weight of $E'$.

\begin{proposition}
In the decomposition~(\ref{decomp}) the operator
$1\otimes \hat{\dolbeault}_{F^\nu} \otimes I_{E(\mu)}$
has the form
$$
1\otimes \hat{\dolbeault}_{F^\nu} \otimes I_{E(\mu)}
= \left(\begin{array}{cc}0&0\\ *&*\end{array}\right)\,.
$$
In other words the range of $1\otimes \hat{\dolbeault}_{F^\nu} \otimes I_{E(\mu)}$
is contained in 
$\Gamma(\wedge^\bullet \overline{\mathfrak{u}} \otimes E(\mu)\otimes E')$.
\end{proposition}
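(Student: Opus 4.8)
The plan is to identify the operator $1\otimes\hat{\dolbeault}_{F^\nu}\otimes I_{E(\mu)}$ completely explicitly, then reduce the statement to a purely algebraic fact about the action of $\mathfrak{u}$ on $F^\nu$, and finally prove that fact from the grading defined by $\mathrm{ad}\,\xi_0$.

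First I would unwind the definitions. By Proposition~\ref{leibniz} and the computation~(\ref{calculX}) on which it rests (together with the displayed identity $\Delta_F(\dolbeault(E))=\dolbeault(E\otimes F)+1\otimes(\hat{\dolbeault}_F\otimes I_E)$ recorded just above), the operator $1\otimes\hat{\dolbeault}_{F^\nu}\otimes I_{E(\mu)}$ is a zeroth-order (non-differential) operator on $C^\infty(G,\wedge^\bullet\overline{\mathfrak{u}}\otimes E(\mu)\otimes F^\nu)^H$, acting on the fibres by $\sum_i e(\xi_i)\otimes I_{E(\mu)}\otimes\pi(X_i)$, where $\pi$ is the representation of $\mathfrak{g}$ on $F^\nu$ and the $X_i$ run over a basis of $\mathfrak{u}$. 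In each summand the factor $e(\xi_i)$ touches only the $\wedge^\bullet\overline{\mathfrak{u}}$-component and $I_{E(\mu)}$ only the $E(\mu)$-component, so the $F^\nu$-component of the image of any section lies in $\sum_i\pi(X_i)F^\nu=\pi(\mathfrak{u})\,F^\nu$. Hence the assertion of the proposition is \emph{equivalent} to the algebraic inclusion $\pi(\mathfrak{u})\,F^\nu\subseteq E'$, i.e.\ to the statement that $\mathfrak{u}$ kills $E^\nu$ and stabilises $E'$, so that $E'$ is exactly the $\mathfrak{q}$-submodule $\pi(\mathfrak{u})F^\nu$ and $E^\nu\cong F^\nu/\pi(\mathfrak{u})F^\nu$ as an $H$-module. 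The vanishing of the operator on the summand $\wedge^\bullet\overline{\mathfrak{u}}\otimes\mathcal{E}(\mu)\otimes\mathcal{E}^\nu$ is then immediate: since $E^\nu$ is the $H$-submodule generated by an extreme weight vector $v_\nu$ and $[\mathfrak{u},\mathfrak{h}]\subseteq\mathfrak{u}$, one has $\pi(\mathfrak{u})\,\mathcal{U}(\mathfrak{h})v_\nu\subseteq\mathcal{U}(\mathfrak{h})\,\pi(\mathfrak{u})v_\nu=0$.

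For the remaining inclusion I would use the $\mathrm{ad}\,\xi_0$-grading $\mathfrak{g}=\bigoplus_a\mathfrak{g}_a$, in which $\mathfrak{h}=\mathfrak{g}_0$ and $\mathfrak{u}=\bigoplus_{a>0}\mathfrak{g}_a$. Decompose $F^\nu=\bigoplus_c F^\nu[c]$ into the eigenspaces of $\pi(\xi_0)$. Since $\xi_0$ is central in $\mathfrak{h}$, each $F^\nu[c]$ is an $H$-submodule and $\xi_0$ acts by a scalar on every irreducible $H$-constituent, so the $H$-decomposition $F^\nu=E^\nu\oplus E'$ is a coarsening of the $\xi_0$-eigenspace decomposition; in particular $E^\nu$ is concentrated in a single eigenvalue $c_0$, the one carried by $v_\nu$. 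Because $\pi(\mathfrak{g}_a)$ shifts the $\xi_0$-eigenvalue by $a>0$ and $E^\nu$ sits at the extreme value $c_0$ that $\mathfrak{u}$ moves away from, we obtain $\pi(\mathfrak{u})F^\nu\subseteq\bigoplus_{c\neq c_0}F^\nu[c]=E'$, as required. (Alternatively one may quote Kostant's computation of $H_\bullet(\mathfrak{u},F^\nu)$: $H_0(\mathfrak{u},F^\nu)=F^\nu/\mathfrak{u}F^\nu$ is $H$-irreducible, hence concentrated in one $\xi_0$-eigenvalue, so $\mathfrak{u}F^\nu$ is the complement of a single eigenspace, namely $E^\nu$.)

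The only genuinely delicate point is this last bookkeeping step: one must check, using the positive system attached to $\mathfrak{q}=\mathfrak{h}\oplus\mathfrak{u}$ and the sign of $\mathrm{ad}\,\xi_0$ on $\mathfrak{u}$, that the constituent $E^\nu$ of highest weight $\nu$ really is the extreme $\xi_0$-eigenspace that $\mathfrak{u}$ cannot reach rather than the opposite extreme. This is exactly where the observations recorded just before the proposition — that $\nu$ is not a weight of $E'$, and that $E^\nu$ contains the cyclic vector $v_\nu$ — are used. Everything else is the routine identification of the abstract operator carried out in the first step, together with the fact (equation~(\ref{exact})) that the decomposition~(\ref{decomp}) is compatible with all the operators in play.
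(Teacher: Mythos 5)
Your reduction of the proposition to a fibrewise algebraic statement is the right first move, but the statement you reduce it to is the wrong one, and it is false. You identify the correction term as $\sum_i e(\xi_i)\otimes I_{E(\mu)}\otimes\pi(X_i)$ with $X_i$ a basis of $\mathfrak{u}$, and then claim $\pi(\mathfrak{u})F^\nu\subseteq E'$, i.e.\ that ``$\mathfrak{u}$ kills $E^\nu$ and stabilises $E'$'' and that $E^\nu\cong F^\nu/\mathfrak{u}F^\nu$. With the paper's normalization this is exactly backwards: $\mathrm{ad}\,\xi_0$ has \emph{positive} eigenvalues on $\mathfrak{u}$ and $\nu$ is dominant for a positive system containing the roots of $\mathfrak{u}$, so $\nu(\xi_0)$ is the \emph{largest} eigenvalue of $\xi_0$ on $F^\nu$ and $E^\nu=(F^\nu)^{\mathfrak{u}}$ sits in the top eigenspace; $\pi(\mathfrak{u})$ raises the eigenvalue, hence maps the lower eigenspaces (which lie in $E'$) into $E^\nu$ and does not stabilise $E'$. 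Concretely, for $\mathfrak{g}=\mathfrak{sl}(2)$, $\mathfrak{h}=\mathfrak{t}$, $\mathfrak{u}=\mathbb{C}X_\alpha$ and $F^\nu$ the adjoint representation, one has $E^\nu=\mathbb{C}X_\alpha$, $E'=\mathbb{C}H_\alpha\oplus\mathbb{C}X_{-\alpha}$, and $\pi(X_\alpha)H_\alpha=-2X_\alpha\in E^\nu$; so $\pi(\mathfrak{u})F^\nu\not\subseteq E'$, $E'$ is not $\pi(\mathfrak{u})F^\nu$, and $F^\nu/\mathfrak{u}F^\nu$ is the \emph{lowest} $H$-type $\mathbb{C}X_{-\alpha}$, not $E^\nu$. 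Your parenthetical appeal to Kostant fails for the same reason: it is $H^0(\mathfrak{u},F^\nu)=(F^\nu)^{\mathfrak{u}}$ that equals $E^\nu$, whereas $H_0(\mathfrak{u},F^\nu)=F^\nu/\mathfrak{u}F^\nu$ is attached to the lowest weight and gives no control of $\mathfrak{u}F^\nu$ relative to $E^\nu$.

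The containment that is true, and the one the paper's own proof uses, is the one for the opposite nilradical: the range of $\hat{\dolbeault}_{F^\nu}$ is asserted to lie in $\wedge^\bullet\overline{\mathfrak{u}}\otimes\overline{\mathfrak{u}}F^\nu$, and then $E'\supseteq\overline{\mathfrak{u}}F^\nu$ follows because $F^\nu/\overline{\mathfrak{u}}F^\nu$ is the irreducible $H$-module of highest weight $\nu$ (equivalently, because $\pi(\overline{\mathfrak{u}})$ \emph{lowers} the $\xi_0$-eigenvalue, so its image lies in $\bigoplus_{c<\nu(\xi_0)}F^\nu[c]\subseteq E'$, $E^\nu$ having multiplicity one in $F^\nu|_H$). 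So the grading mechanism you had in mind does work, but only for $\overline{\mathfrak{u}}$, and the ``delicate point'' you flagged — which extreme eigenspace $E^\nu$ occupies and which nilradical cannot reach it — is resolved in the opposite direction from the one you assert. To repair your argument you must first settle, consistently with equation~(\ref{dolbeault}) and the identification $\mathfrak{u}^*\simeq\overline{\mathfrak{u}}$, that the zeroth-order term acts on the $F^\nu$-factor through $\overline{\mathfrak{u}}$ rather than $\mathfrak{u}$ (as the paper's proof takes for granted); as written, your proof hinges on the false inclusion $\pi(\mathfrak{u})F^\nu\subseteq E'$ and therefore does not establish the proposition.
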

\begin{proof}
The range of $\hat{\dolbeault}_{F^\nu}$ is
contained in $\wedge \overline{\mathfrak{u}}\otimes \overline{\mathfrak{u}}F^\nu$. But
$F^\nu/\overline{\mathfrak{u}}F^\nu$ is a highest
weight module for $H$ with highest weight $\nu$.
So $E^\nu\subset F^\nu/\overline{\mathfrak{u}}F^\nu$ and $E' \supset \overline{\mathfrak{u}}F^\nu$.
\end{proof}
Thanks to this proposition, we see that 
$$
p^G_{\mu,\nu}\circ \Delta_{F^\nu}(\dolbeault(E(\mu)))=
\dolbeault(E(\mu)\otimes E^\nu))\circ p^G_{\mu,\nu}\,.
$$
so that $p^G_{\mu,\nu}$ goes down to a $G$-map on cohomology.
Together with proposition~\ref{alpha}, this implies theorem~\ref{mainthm-2}.
\begin{remark}\emph{
This is a fundamental difference with the Dirac
operator. In this case we only have
$
1\otimes D_{F^\nu} \otimes I_{E^\lambda}
= \left(\begin{smallmatrix}0&*\\ *&*\end{smallmatrix}\right)
$
so $p^H_{\mu,\nu}$ does not induces a map 
from $\ker(\Delta_{F^\nu}(D(E(\mu))))$ to $\ker(D(E(\mu)\otimes E^\nu)))$.}
\end{remark}
Also note that the subspace 
$\left(C^\infty(G)\otimes\wedge^\bullet\overline{\mathfrak{u}}\otimes \otimes E(\mu) \otimes E'\right)^H$
is stable by the operator $\Delta_{F^\nu}(\dolbeault(E(\mu)))$. In particular
the inclusion $\iota \colon E'\to F^\nu$ yields a long exact sequence
in cohomology, together with an inclusion of $G$-maps
\begin{equation}\label{factor}
\frac{
	H_{\Delta_{F^\nu}(\dolbeault(E(\mu)))}
	}
	{
	\iota
	\left(
		H_{\Delta_{F^\nu}(\dolbeault(E(\mu)))
			|_{\Gamma(\wedge^\bullet \overline{\mathfrak{u}} 
		  	\otimes E(\mu) \otimes E')}} 
	\right)
	}	 
\longrightarrow
H_{\dolbeault(E(\mu)\otimes E^\nu)}
\end{equation}
whose range is the kernel of the ($G$-equivariant) connecting map
\begin{equation}\label{connecting}
H_{\dolbeault(E(\mu)\otimes E^\nu))}
\longrightarrow
H^{+1}_{\Delta_{F^\nu}(\dolbeault(E(\mu)))
	|_{\Gamma(\wedge^\bullet \overline{\mathfrak{u}} 
	   \otimes E(\mu) \otimes E')
	  }
  } 
\end{equation}
We are now going to prove that, under some equiregularity condition,
the left hand side of equation~\eqref{factor}
contains to the full $\mathcal{Z}(\mathfrak{g})$ primary component
of $H_{\Delta_{F^\nu}(\dolbeault(E(\mu)))}$ with generalized
infinitesimal character $\chi_{\mu+\rho(\mathfrak{u})+\nu}$ and the connecting
map~\eqref{connecting} vanishes.

\subsection{Proof of theorem \ref{mainthm-3}}
As explained in the preceding section the algebra
$H_{d_{\Delta_F(\dolbeault(E))}}$ acts on the space 
$H_{\Delta_F(\dolbeault(E))}$.
Thus it makes sense to consider the class of $Z\otimes I_{\wedge^\bullet\overline{\mathfrak{u}}\otimes E\otimes F}$ in this
algebra, for $Z\in\mathcal{Z}(\mathfrak{g})$, and the action of 
$Z\otimes I_{\wedge^\bullet\overline{\mathfrak{u}}\otimes E\otimes F}$ on
$H_{\Delta_F(\dolbeault(E))}\simeq H_{\dolbeault(E)}\otimes F$ only depends
on its class in  $H_{d_{\Delta_F(\dolbeault(E))}}$.
Nevertheless the algebra 
$H_{d_{\Delta_F(\dolbeault(E))}}$
is not the image of $H_{d_{\dolbeault}}$ 
by $\overline{\varphi}_{E\otimes F}$ 
so that the characters
appearing in this action of $\mathcal{Z}(\mathfrak{g})$
may not be computed using the arguments 
of the previous section.
So we have to compute $H_{d_{\Delta_F(\dolbeault(E))}}$
and describe the homomorphism 
$\mathcal{Z}(\mathfrak{h})\to H_{d_{\Delta_F(\dolbeault(E))}}
\to\mathrm{End}\left(H_{{\Delta_F(\dolbeault(E))}}\right)$
explicitly. Actually we will need to compute
\begin{equation}\label{tocompute}
\mathcal{Z}(\mathfrak{g}) 
\to 
H_{d_{\Delta_{F^\nu}(\dolbeault(E(\mu)))
	|_{\Gamma(\wedge^\bullet \overline{\mathfrak{u}} 
	   \otimes E(\mu) \otimes E')}}}
\to
\mathrm{End}
	\left( 
		H_{{\Delta_{F^\nu}(\dolbeault(E(\mu)))
	  		|_{\Gamma(\wedge^\bullet \overline{\mathfrak{u}} 
			   \otimes E(\mu) \otimes E')}}}
	\right)	
\end{equation}
also. Here is the main technical result we need.
We will prove it in the next paragraph.

\begin{lemma} \label{mainlemma}
Let $E$ be a smooth representation of $H$ with finite length and $F$ a 
finite dimensional representation of $G$. 
Let $E'$ is a $(\mathfrak{q},H)$-submodule of $E\otimes F$ so that
$\Gamma(\wedge^\bullet \mathfrak{u}\otimes E')$ 
is stable by the operator  $\Delta_F(\dolbeault(E))$. 
Then the action of $z\in\mathcal{Z}(\mathfrak{g})$ on
$H_{{\Delta_{F^\nu}(\dolbeault(E(\mu)))
	  		|_{\Gamma(\wedge^\bullet \overline{\mathfrak{u}} 
			   \otimes E(\mu) \otimes E')}}}$
is given by the action of $p(z)$ on 
$$
H(\mathfrak{u},E')\subset H(\mathfrak{u},F)\otimes E
\subset \wedge^\bullet \overline{\mathfrak{u}}\otimes E \otimes F\,.
$$
\end{lemma}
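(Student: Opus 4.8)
The plan is to transcribe the argument of Section~3 (the identification of $\zeta_{\dolbeault}$ in Lemma~\ref{zeta} together with the proof of Theorem~\ref{cohomology-general}) to the restricted operator
$$
\mathcal{D}' := \Delta_F(\dolbeault(E))\big|_{\Gamma(\wedge^\bullet\overline{\mathfrak{u}}\otimes E')}
= \Big(\hat{\dolbeault}(E\otimes F)+1\otimes(\dolbeault_F\otimes I_E)\Big)\big|_{\Gamma(\wedge^\bullet\overline{\mathfrak{u}}\otimes E')}\,,
$$
paying attention to the one new ingredient: because the leading term $\hat{\dolbeault}_F=\sum_i \pi(X_i)\otimes e(\xi_i)$ of $\dolbeault_F$ involves $X_i\in\mathfrak{u}$, the correction term feeds the action of $\mathfrak{u}$ on $E'$ into $\mathcal{D}'$ as an operator of order $0$, so that $\mathcal{D}'=\sum_i\big(r(X_i)+\rho'(X_i)\big)\otimes e(\xi_i)\otimes I+\sum_{i<j}1\otimes e(\xi_i)e(\xi_j)\iota([X_i,X_j])\otimes I$ where $\rho'$ denotes the $\mathfrak{u}$-action on $E'\subseteq E\otimes F$. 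First I would record, as in the Proposition preceding Theorem~\ref{infinitesimal}, that the action of $z\in\mathcal{Z}(\mathfrak{g})$ on $H_{\mathcal{D}'}$ is $l(z)=r(z^\sharp)\otimes I$, that $r(z^\sharp)$ is central in the relevant algebra of operators and hence represents a class in the associated graded-commutator cohomology acting on $H_{\mathcal{D}'}$, and that only that class matters. (The same discussion applies verbatim with $\Gamma(\wedge^\bullet\overline{\mathfrak{u}}\otimes E')$ replaced by $\mathrm{Hom}_H(\mathcal{U}(\mathfrak{g})\otimes\wedge^\bullet\mathfrak{u},E')$.)

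To compute that class I would pass to the full (non-$H$-invariant) complex and filter by the order of differential operator exactly as in Lemma~\ref{commut}: since $\rho'$ and the structure term $\sum_{i<j}e(\xi_i)e(\xi_j)\iota([X_i,X_j])$ are of order $0$, the leading symbol of $\mathcal{D}'_{\mathrm{full}}$ is still $\sum_i r(X_i)\otimes e(\xi_i)$, so the associated graded differential is again the Koszul differential $1\otimes\partial_{\mathfrak{u}}$ of Lemma~\ref{commut}, now carrying an extra tensor factor coming from $E'$. Lemma~\ref{koszul} and the PBW projection $p$ of Lemma~\ref{zeta} then apply unchanged, and one finds that in $H_{\mathcal{D}'}$ the operator $r(z^\sharp)\otimes I$ is cohomologous to the operator induced on $\Gamma(\wedge^\bullet\overline{\mathfrak{u}}\otimes E')$ by $\delta_{\mathfrak{h}}(p(z^\sharp))$, which by $H$-invariance is the \emph{diagonal} $\mathfrak{h}$-action of $p(z)\in\mathcal{Z}(\mathfrak{h})$ on $\wedge^\bullet\overline{\mathfrak{u}}\otimes E'$. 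Since $p(z)$ is $\mathfrak{h}$-central this operator commutes with the Chevalley--Eilenberg differential along $\mathfrak{u}$ — which is precisely the order-$0$ part of $\mathcal{D}'$ — so it descends, on cohomology, to the natural action of $p(z)$ on the $\mathfrak{u}$-cohomology $H(\mathfrak{u},E')\subset H(\mathfrak{u},F)\otimes E\subset\wedge^\bullet\overline{\mathfrak{u}}\otimes E\otimes F$.

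The step I expect to cost the most work is making this last reduction precise, since it requires reconciling the order filtration above with the internal $\mathfrak{u}$-structure of $E'$. For this I would introduce on $E'$ the decreasing $(\mathfrak{q},H)$-filtration $E'\supseteq\mathfrak{u}E'\supseteq\mathfrak{u}^2E'\supseteq\cdots$; it is finite because $\mathfrak{u}$ is nilpotent, acts by nilpotent operators on $F$ (Engel) and trivially on $E$, so $\mathfrak{u}^{\dim F}E'=0$, and $\mathfrak{u}$ acts by $0$ on each graded piece $W_j=\mathfrak{u}^{\,j-1}E'/\mathfrak{u}^{\,j}E'$. On the corresponding filtration of the complex $(\Gamma(\wedge^\bullet\overline{\mathfrak{u}}\otimes E'),\mathcal{D}')$ — which is stable under $l$ — the order-$0$ correction $\rho'$ vanishes on graded pieces, so $\mathcal{D}'$ induces the honest twisted Dolbeault operator $\dolbeault(W_j)$, and Theorem~\ref{infinitesimal} together with Lemma~\ref{zeta} describes the $\mathcal{Z}(\mathfrak{g})$-action on $H_{\dolbeault(W_j)}$ through $\mathcal{Z}(\mathfrak{g})\xrightarrow{\;p\;}\mathcal{Z}(\mathfrak{h})$ and the $\mathcal{Z}(\mathfrak{h})$-module structure of $W_j$. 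The same filtration of $E'$ filters the Chevalley--Eilenberg complex computing $H(\mathfrak{u},E')$, with graded pieces computing $H(\mathfrak{u},W_j)=H(\mathfrak{u},\mathbb{C})\otimes W_j$, on which $p(z)$ acts diagonally; using Kostant's description of $H(\mathfrak{u},\mathbb{C})$ and the Casselman--Osborne identity recorded in the Remark after Lemma~\ref{zeta}, this action is governed by the same data, namely $p$ and the $\mathcal{Z}(\mathfrak{h})$-structure of the $W_j$. Comparing the resulting two $\mathcal{Z}(\mathfrak{g})$-equivariant spectral sequences — both abut to their respective cohomologies and have $E_1$-terms controlled identically — then yields that $z$ acts on $H_{\mathcal{D}'}$ exactly as $p(z)$ acts on $H(\mathfrak{u},E')$, which is the assertion.
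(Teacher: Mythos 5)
Your overall frame (filter the operator algebra by order, identify the graded differential with the Koszul differential, use the PBW projection $p$ to produce a zero-order representative of $z$) is the same as the paper's, and your intermediate claim -- that $z$ is cohomologous to $\delta_{\mathfrak{h}}(p(z))$ for $d_{\mathcal{D}'}$ -- is exactly what the paper proves. But your justification of that claim, ``Lemma \ref{koszul} and the PBW projection of Lemma \ref{zeta} then apply unchanged,'' skips precisely the new difficulty, which the paper flags explicitly before stating Lemma \ref{mainlemma}: $\Delta_F(\dolbeault(E))$ (and its restriction $\mathcal{D}'$) is \emph{not} in the image of $\varphi$, so the computation must be carried out with coefficients in $\mathrm{End}(E\otimes F)$ (or $\mathrm{End}(E')$), not just $\mathrm{End}(\wedge^\bullet\overline{\mathfrak{u}})$. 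With this extra factor the second half of Lemma \ref{koszul} fails: the $\xi_0$-weight argument forcing the $H$-invariant Koszul cohomology into $S(\mathfrak{h})^H\otimes 1\otimes 1$ breaks down, because $\mathrm{End}(F)$ carries strictly positive $\xi_0$-weights that can cancel the negative ones from $S(\overline{\mathfrak{u}})\otimes\wedge^\bullet\overline{\mathfrak{u}}$. Consequently there is no Hodge-type decomposition as in \eqref{cohomology}, and the order-by-order induction of Theorem \ref{cohomology-general} cannot be run ``unchanged.'' The paper replaces it by a genuine spectral-sequence argument: the $E_1$ page is $\left(S(\overline{\mathfrak{u}})\otimes\wedge^\bullet\overline{\mathfrak{u}}\otimes\mathrm{End}(E\otimes F)\right)^H$, the differential $d_1$ is the commutator with the order-zero term $1\otimes\dolbeault_F\otimes I_E$ -- exactly the correction $\rho'$ you set aside as not affecting the symbol -- convergence is secured by the finite dimensionality of $H_\partial$ (via $\xi_0$), and the identity $z\equiv p(z)$ is obtained by an induction over the pages, constructing homotopies $\omega_r$ from the decomposition $z-p(z)\in\mathcal{U}(\mathfrak{g})\mathfrak{u}$ of \cite[Lemma 4.123]{KV}. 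None of this is present in your argument.

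The concluding step of your proposal does not repair this. Comparing the two spectral sequences attached to the $\mathfrak{u}$-power filtration of $E'$ is not a valid inference: you construct no map between the section complex $(\Gamma(\wedge^\bullet\overline{\mathfrak{u}}\otimes E\otimes E'),\mathcal{D}')$ and the Chevalley--Eilenberg complex computing $H(\mathfrak{u},E')$, and their $E_1$-terms are in fact \emph{not} ``controlled identically'': on the geometric side $z$ acts on $H_{\dolbeault(E\otimes W_j)}$ through the single family of characters $\chi_{\mu_j+\rho(\mathfrak{u})}$ attached to the $\mathcal{Z}(\mathfrak{h})$-content of the graded piece $W_j$ (Theorem \ref{mainthm}), while on the algebraic side $p(z)$ acts on $H(\mathfrak{u},\mathbb{C})\otimes W_j$ through those parameters shifted by Kostant's weights of $H(\mathfrak{u},\mathbb{C})$. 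Having separate upper bounds for the two abutments does not yield the assertion that the $z$-action on $H_{\mathcal{D}'}$ is governed by the $p(z)$-action on $H(\mathfrak{u},E')$ -- and it is this finer statement (the passage from $\wedge^\bullet\overline{\mathfrak{u}}\otimes E'$ to $H(\mathfrak{u},E')\subset H(\mathfrak{u},F)\otimes E$, i.e.\ to Kostant's weights) that the application to Theorem \ref{mainthm-3} requires. In the paper this factorization comes directly from the representative produced by the spectral sequence: the class of $z$ in $H_{d_{\Delta_F(\dolbeault(E))}}$ is the order-zero, fiberwise $\mathfrak{h}$-action $c\otimes\sigma\otimes\pi(p(z))$, which preserves the $E'$-subcomplex, commutes with its fiberwise differential, and therefore acts through $E\otimes H(\mathfrak{u},E')$; nothing in your two-filtration comparison substitutes for that identification, and indeed the ``graded'' picture in which $\rho'$ is killed cannot see it.
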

Let $F^\nu $ is an irreducible finite dimensional representation
of $G$ with highest weight $\nu$. If $E$ is a smooth representation of $H$ 
we note $E(\mu)$ its $\mathcal{Z}(\mathfrak{g})$ primary component
generalized infinitesimal character $\chi_\mu$.
The $\mathcal{U}(\mathfrak{g})$-module $H_{\dolbeault(E(\mu))}\otimes F^\nu$
splits into a direct sum
$$
H_{\dolbeault(E(\mu))}\otimes F^\nu = \oplus_{\nu'}H_{\mu+\rho(\mathfrak{u})+\nu'}\,,
$$
where $\nu'$ is a weight of $F^\nu$, and $H_{\mu+\rho(\mathfrak{u})+\nu'}$
is the $\mathcal{Z}(\mathfrak{g})$ primary component with generalized
infinitesimal characters $\chi_{\mu+\rho(\mathfrak{u})+\nu'}\,.$ 

Assume that 
\begin{equation}\tag{C}\label{condition}
\mu + \rho(\mathfrak{u})+\nu 
\text{ is at least singular as }
\mu + \rho(\mathfrak{u})\,.
\end{equation}
Under this condition, thanks to lemma \eqref{mainlemma} (applied to
$E=E(\mu)$ and $F=F^\nu$) and 
\cite[Proposition 7.166]{KV} the map $\iota$ in equation $\eqref{factor}$
is injective and the connecting map \eqref{connecting} vanishes.
So we have an isomorphism of $G$-representation
\begin{equation}
\frac{
	H_{\Delta_{F^\nu}(\dolbeault(E(\mu)))}
	}
	{
	H_{\Delta_{F^\nu}(\dolbeault(E(\mu)))
			|_{\Gamma(\wedge^\bullet \overline{\mathfrak{u}} 
		  	\otimes E(\mu) \otimes E')}} 
	}	 
\mathop{\longrightarrow}\limits^\sim
H_{\dolbeault(E(\mu)\otimes E^\nu)}
\end{equation}
and the projection on the component $H_{\mu+\rho(\mathfrak{u})+\nu}$ in
$H_{\Delta_{F^\nu}(\dolbeault(E(\mu)))}\simeq H_{\dolbeault(E(\mu))}\otimes F^\nu$
factors through the left side of this isomorphism.
It follows that 
\begin{theorem}[Theorem \ref{mainthm-3}]\label{explicit}
Under condition \eqref{condition} the map 
$p_{\mu,\nu}^G\circ\alpha$ induces is a surjective map in cohomology
$$
p_{\mu,\nu}^G\circ\alpha \colon
H_{\dolbeault(E(\mu))}\otimes F^\nu \to
H_{\dolbeault(E(\mu)\otimes E^\nu)}
$$
that induces an isomorphism onto the $\mathcal{Z}(\mathfrak{g})$
primary components with
generalized infinitesimal character $ \chi_{\mu+\rho(\mathfrak{u})+\nu}$
on both sides.
\end{theorem}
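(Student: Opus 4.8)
The plan is to assemble the theorem from the ingredients already developed in this section, the key new input being Lemma~\ref{mainlemma}, which computes the $\mathcal{Z}(\mathfrak{g})$-action on the cohomology of the restricted complex. First I would fix the decomposition $F^\nu = E^\nu \oplus E'$ of the restriction of $F^\nu$ to $H$, together with the splitting \eqref{decomp} of the bundle and the consequent direct sum
$$
H_{\Delta_{F^\nu}(\dolbeault(E(\mu)))} \simeq H_{\dolbeault(E(\mu))}\otimes F^\nu
$$
from Proposition~\ref{alpha}. The point of condition~\eqref{condition} is that it is exactly the hypothesis needed to invoke \cite[Proposition 7.166]{KV}: combined with Lemma~\ref{mainlemma}, which identifies the $\mathcal{Z}(\mathfrak{g})$-action on the subcomplex cohomology $H_{\Delta_{F^\nu}(\dolbeault(E(\mu)))|_{\Gamma(\wedge^\bullet\overline{\mathfrak{u}}\otimes E(\mu)\otimes E')}}$ with the action of $p(z)$ on $H(\mathfrak{u},E')$, condition~\eqref{condition} forces the generalized infinitesimal character $\chi_{\mu+\rho(\mathfrak{u})+\nu}$ not to occur in that subcomplex cohomology. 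Consequently, taking the $\chi_{\mu+\rho(\mathfrak{u})+\nu}$-primary component is an exact functor that kills the subcomplex term in the long exact sequence attached to $\iota\colon E'\hookrightarrow F^\nu$; hence the connecting map \eqref{connecting} restricted to that primary component vanishes, and $\iota$ on the relevant primary component is injective.

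Next I would run the long exact sequence in cohomology associated to the short exact sequence of complexes
$$
0 \to \Gamma(\wedge^\bullet\overline{\mathfrak{u}}\otimes E(\mu)\otimes E') \to \Gamma(\wedge^\bullet\overline{\mathfrak{u}}\otimes E(\mu)\otimes F^\nu) \to \Gamma(\wedge^\bullet\overline{\mathfrak{u}}\otimes E(\mu)\otimes E^\nu) \to 0,
$$
where the quotient complex is identified, via the Proposition preceding equation~\eqref{factor} (the block-triangular form of $1\otimes\hat{\dolbeault}_{F^\nu}\otimes I_{E(\mu)}$), with $\dolbeault(E(\mu)\otimes E^\nu)$ so that the quotient map is precisely $p^G_{\mu,\nu}$. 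After passing to the $\chi_{\mu+\rho(\mathfrak{u})+\nu}$-primary component, the subcomplex term disappears and the long exact sequence degenerates into an isomorphism
$$
\bigl(H_{\Delta_{F^\nu}(\dolbeault(E(\mu)))}\bigr)_{\chi_{\mu+\rho(\mathfrak{u})+\nu}} \xrightarrow{\ \sim\ } \bigl(H_{\dolbeault(E(\mu)\otimes E^\nu)}\bigr)_{\chi_{\mu+\rho(\mathfrak{u})+\nu}}
$$
induced by $p^G_{\mu,\nu}$. Transporting through $\alpha$ and using Theorem~\ref{infinitesimal} to identify the left-hand side with the primary component $H_{\mu+\rho(\mathfrak{u})+\nu}$ of $H_{\dolbeault(E(\mu))}\otimes F^\nu$ gives the claimed isomorphism; surjectivity of $p^G_{\mu,\nu}\circ\alpha$ onto all of $H_{\dolbeault(E(\mu)\otimes E^\nu)}$ follows because the connecting map out of that cohomology into the degree-shifted subcomplex cohomology already vanishes on the nose once one checks (again via Lemma~\ref{mainlemma} and the weight bound ``$\nu$ is not a weight of $E'$'') that the target of the connecting map shares no infinitesimal character with the source in the relevant range.

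I expect the main obstacle to be the bookkeeping that makes the exactness argument legitimate: one must check that taking $\mathcal{Z}(\mathfrak{g})$-primary components really is exact on the categories of $G$-modules in play (so that it commutes with cohomology and with the long exact sequence), and one must be careful that Lemma~\ref{mainlemma} is being applied to the correct submodule — it is stated for a $(\mathfrak{q},H)$-submodule $E'$ of $E\otimes F$, and one needs that $E'$ as defined here (the $H$-complement of $E^\nu$, which contains $\overline{\mathfrak{u}}F^\nu$) is indeed $\mathfrak{q}$-stable, so that $\Gamma(\wedge^\bullet\overline{\mathfrak{u}}\otimes E(\mu)\otimes E')$ is a genuine subcomplex. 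Granting this, the comparison of infinitesimal characters reduces, by Lemma~\ref{mainlemma}, to the elementary statement that the weights of $H(\mathfrak{u},E')$ shifted by $\rho(\mathfrak{u})$ avoid $\mu+\rho(\mathfrak{u})+\nu$ up to $W_G$, which is where \cite[Proposition 7.166]{KV} and condition~\eqref{condition} do the real work; everything else is formal homological algebra applied to the complexes whose identifications have already been established above.
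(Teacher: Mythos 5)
Your proposal follows the paper's own route essentially step for step: the same decomposition $F^\nu=E^\nu\oplus E'$, the same subcomplex $\Gamma(\wedge^\bullet\overline{\mathfrak{u}}\otimes E(\mu)\otimes E')$ whose quotient complex is identified with $\dolbeault(E(\mu)\otimes E^\nu)$ via the block-triangular form of $1\otimes\hat{\dolbeault}_{F^\nu}\otimes I_{E(\mu)}$, the same long exact sequence, and the same key inputs (Lemma \ref{mainlemma}, condition \eqref{condition}, \cite[Proposition 7.166]{KV}, Proposition \ref{alpha}) to exclude the character $\chi_{\mu+\rho(\mathfrak{u})+\nu}$ from the subcomplex cohomology; your localization at that primary component then yields the isomorphism statement exactly as the paper's projection argument does, and your two ``bookkeeping'' checkpoints ($\mathfrak{q}$-stability of $E(\mu)\otimes E'$ and exactness of taking primary components of $\mathcal{Z}(\mathfrak{g})$-finite modules) are indeed the points that make this legitimate.

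The one step that does not hold up as you state it is the surjectivity claim. You justify the vanishing of the connecting map \eqref{connecting} by asserting that its source and target share no infinitesimal character ``in the relevant range.'' Nothing you have cited gives this: by Theorem \ref{infinitesimal} and Kostant's tensoring theorem the source $H_{\dolbeault(E(\mu)\otimes E^\nu)}$ carries characters $\chi_{\mu+\rho(\mathfrak{u})+\nu''}$ for arbitrary weights $\nu''$ of $E^\nu$, while by Lemma \ref{mainlemma} the target carries characters attached to the $\mathfrak{h}$-types of $H(\mathfrak{u},E(\mu)\otimes E')$, and away from the extreme weight these two families can perfectly well meet (in particular when $\mu+\rho(\mathfrak{u})$ is singular, which condition \eqref{condition} allows). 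Condition \eqref{condition} together with \cite[Proposition 7.166]{KV} isolates only the single character $\chi_{\mu+\rho(\mathfrak{u})+\nu}$, i.e.\ it guarantees that this character is absent from the subcomplex side; that is enough for vanishing of \eqref{connecting} and injectivity of $\iota$ after projecting to the $\chi_{\mu+\rho(\mathfrak{u})+\nu}$-primary component, hence for the isomorphism of primary components, but not for surjectivity of $p^G_{\mu,\nu}\circ\alpha$ onto all of $H_{\dolbeault(E(\mu)\otimes E^\nu)}$. The paper obtains that surjectivity by deducing the full vanishing of the connecting map \eqref{connecting} (and injectivity of $\iota$ in \eqref{factor}) from Lemma \ref{mainlemma} and \cite[Proposition 7.166]{KV}; if you want the surjectivity part of the statement you must argue that full vanishing (as the paper does), rather than appeal to a global disjointness of infinitesimal characters, which is false in general.
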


\subsection{Proof of lemma \ref{mainlemma}}

In terms of enveloping and symmetric Lie algebras
the principal symbol map is
$$ 
\mathrm{Gr}_E  \colon 
\mathbb{D}_G(\wedge^\bullet \overline{\mathfrak{u}}\otimes E \otimes F)
\to
\left(
 \mathcal{S}(\mathfrak{h}^\perp)\otimes 
 \wedge\mathfrak{h}^\perp\otimes \mathrm{End}(E)
\right)^H
$$

Let 
$\mathbb{D}_G^p(\wedge^\bullet \overline{\mathfrak{u}}\otimes E \otimes F)$
be the space of invariant differential operators with order less than $p$.
\begin{proposition}
Let $d=d_{\Delta_F(\dolbeault(E))}$ denotes the 
graded commutator with $\Delta_F(\dolbeault(E))$,
and $\partial=\partial_{\mathfrak{u}}$ 
the Koszul differential 
along $\mathfrak{u}$ on the 
algebra $\mathcal{S}(\mathfrak{h}^\perp)\otimes 
 \wedge\mathfrak{h}^\perp$ as before.

We have a commutative diagram
$$
\begin{diagram}
\mathbb{D}_G^p(\wedge^\bullet \overline{\mathfrak{u}}\otimes E \otimes F)
& \rTo^{\mathrm{Gr}_{E\otimes F}^{p}} &
\left(
 \mathcal{S}_{p}(\mathfrak{h}^\perp)\otimes 
 \wedge 
 \mathfrak{h}^\perp\otimes
 \mathrm{End}(E\otimes F)
\right)^H \\
\dTo_{d} & &\dTo_{\partial\otimes I_{E\otimes F}}
\\
\mathbb{D}_G^{p+1}(\wedge^\bullet \overline{\mathfrak{u}}\otimes E \otimes F)
& \rTo^{\mathrm{Gr}_{E\otimes F}^{p+1}} &
\left(
 \mathcal{S}_{p+1}(\mathfrak{h}^\perp)\otimes 
 \wedge 
 \mathfrak{h}^\perp\otimes
 \mathrm{End}(E\otimes F)
\right)^H
\end{diagram}
$$
\end{proposition}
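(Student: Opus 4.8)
The proposition is the $\Delta_F$-twisted analogue of Lemma~\ref{commut} (and the key input for Lemma~\ref{mainlemma}), so the plan is to deduce it from that lemma by isolating the principal part of $\Delta_F(\dolbeault(E))$. The two features absent from Lemma~\ref{commut} that must be handled are the extra $\mathrm{End}(E\otimes F)$-factor and the algebraic (order zero) corrections produced by $\Delta_F$.

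First I would use the formula established earlier,
\[
\Delta_F(\dolbeault(E))=\hat{\dolbeault}(E\otimes F)+R,\qquad R=v(E\otimes F)+1\otimes(\hat{\dolbeault}_F\otimes I_E),
\]
and observe that $R$ is an \emph{order zero} invariant operator: $v(E\otimes F)$ involves only the exterior and interior products on $\wedge^\bullet\overline{\mathfrak{u}}$ (here one uses $[\mathfrak{u},\mathfrak{u}]\subset\mathfrak{u}$, so $\iota([X_i,X_j])$ is a genuine contraction), while $1\otimes(\hat{\dolbeault}_F\otimes I_E)$ is built only from $\pi$ and $c$. Since $\hat{\dolbeault}(E\otimes F)=\sum_i r(X_i)\otimes e(\xi_i)\otimes I_{E\otimes F}$ has order exactly $1$ with principal symbol $\sum_i \mathrm{Gr}(X_i)\otimes e(\xi_i)\otimes I_{E\otimes F}$ (and $\mathrm{Gr}(X_i)\in\mathcal{S}_1(\mathfrak{h}^\perp)$ since $X_i\in\mathfrak{u}\subset\mathfrak{h}^\perp$), the graded commutator splits additively as $d=d_{\hat{\dolbeault}(E\otimes F)}+d_R$, where $d_R$ raises the order of an invariant operator by at most $0$ while $d_{\hat{\dolbeault}(E\otimes F)}$ raises it by at most $1$. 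Hence on $\mathbb{D}^p_G(\wedge^\bullet\overline{\mathfrak{u}}\otimes E\otimes F)$ the top-order component $\mathrm{Gr}^{p+1}_{E\otimes F}(d(D))$ equals $\mathrm{Gr}^{p+1}_{E\otimes F}(d_{\hat{\dolbeault}(E\otimes F)}(D))$, so it suffices to prove the diagram with $d$ replaced by $d_{\hat{\dolbeault}(E\otimes F)}$.

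For this I would repeat the computation in the proof of Lemma~\ref{commut}, carrying the $\mathrm{End}(E\otimes F)$-factor through. Representing a class in $\mathbb{D}^p_G(\wedge^\bullet\overline{\mathfrak{u}}\otimes E\otimes F)$ by an $H$-invariant sum of operators $u\otimes w_{IJ}\otimes T$ with $u\in\mathcal{U}_p(\mathfrak{g})$, $w_{IJ}\in\mathrm{End}(\wedge^\bullet\overline{\mathfrak{u}})$, $T\in\mathrm{End}(E\otimes F)$, and using that $T$ commutes with $e(\xi_i)$ and $I_{E\otimes F}$ and that $X_iu\equiv uX_i\pmod{\mathcal{U}_p(\mathfrak{g})}$, one gets
\[
d_{\hat{\dolbeault}(E\otimes F)}(u\otimes w_{IJ}\otimes T)\equiv \sum_i (X_iu)\otimes[e(\xi_i),w_{IJ}]\otimes T\pmod{\mathbb{D}^p_G},
\]
the bracket being the graded commutator. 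Applying $\mathrm{Gr}^{p+1}_{E\otimes F}$ and invoking the identity $s([e(\xi_i),w_{IJ}])=\iota(X_i)s(w_{IJ})$ from the proof of Lemma~\ref{commut}, the right-hand side becomes $\sum_i X_i\,\mathrm{Gr}(u)\otimes\iota(X_i)s(w_{IJ})\otimes T$, which is exactly $(\partial_{\mathfrak{u}}\otimes I_{E\otimes F})(\mathrm{Gr}^p_{E\otimes F}(u\otimes w_{IJ}\otimes T))$ by the definition of $\partial_{\mathfrak{u}}$. Since $\mathfrak{u}$ is $H$-stable and $X\mapsto\iota(X)$ is $H$-equivariant, $\partial_{\mathfrak{u}}\otimes I_{E\otimes F}$ preserves the $H$-invariants, as does everything else involved, so the identity passes to the spaces of $H$-invariants, which is where $\mathbb{D}_G$ lives.

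The main obstacle I anticipate is the filtration bookkeeping: one must check that the associated graded of $\mathbb{D}_G(\wedge^\bullet\overline{\mathfrak{u}}\otimes E\otimes F)$, for the filtration induced from $\mathcal{U}(\mathfrak{g})$ and the trivial one on the $\mathrm{End}$-factors, is canonically $\bigl(\mathcal{S}(\mathfrak{h}^\perp)\otimes\wedge\mathfrak{h}^\perp\otimes\mathrm{End}(E\otimes F)\bigr)^H$ with $\mathrm{Gr}_{E\otimes F}$ the principal symbol, that the order zero part $R$ genuinely contributes nothing in degree $p+1$, and that the graded commutator with the order one element $\hat{\dolbeault}(E\otimes F)$ descends to symbols exactly as in Lemma~\ref{commut}. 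Granting this, the result follows, as above, from the commutativity of $\mathcal{S}(\mathfrak{h}^\perp)$ together with the centrality of the $\mathrm{End}(E\otimes F)$-factor in the relevant tensor product.
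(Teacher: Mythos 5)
Your proposal is correct and follows essentially the same route as the paper, whose proof is precisely the remark that the argument is formally identical to Lemma~\ref{commut} because only the principal symbol of $\Delta_F(\dolbeault(E))$, namely $\hat{\dolbeault}(E\otimes F)$, contributes to $\partial$. Your write-up simply makes explicit the two points the paper leaves implicit: that the remainder $R=v(E\otimes F)+1\otimes(\hat{\dolbeault}_F\otimes I_E)$ has order zero and hence drops out at the level of top-order symbols, and that the $\mathrm{End}(E\otimes F)$-factor is carried through the computation untouched.
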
 
The proof is formally identical to that 
of lemma \ref{commut}. 
In fact, only the principal symbol of $\Delta_F(\dolbeault(E))$, namely
$\hat{\dolbeault}(E\otimes F)$, contributes to $\partial$.
 
According to this proposition the snake lemma now 
provides as usual a long exact sequence in cohomology
$$
H^0_d \mathop{\longrightarrow}\limits^{\mathrm{Gr}} H^0_\partial
\mathop{\longrightarrow}\limits^S H^{0}_{d}\to H_d^1 \to \cdots
\to H^{p-1}_{\partial}
\mathop{\longrightarrow}\limits^S H^{p-1}_{d}\to H^p_{d} 
 \xrightarrow{\mathrm{Gr}_{E\otimes F}^p}
H^p_{\partial}\mathop{\longrightarrow}\limits^S H_{d}^{p}
\to \cdots
$$
Moreover thanks to the decomposition 
$$
\Delta_F(\dolbeault(E))=\hat{\dolbeault}(E\otimes F)+1\otimes \dolbeault_{F,\mathrm{full}}\otimes I_E
$$
one sees that in the associated spectral sequence,
the operator $d_1$ is given for $[\sigma_P]=[\mathrm{Gr}(P)]\in H_\partial=E_1$ by
$$
d_1[\sigma_P]=[\mathrm{Gr}(d_{1\otimes\dolbeault_F\otimes I_E}P)]\,.
$$

Now the cohomology spaces $H_\partial$ are
given by the following lemma whose proof 
is similar to that of lemma \ref{koszul}.

\begin{lemma}
	One has 
	$$
	H_{\partial}=
	\left(
	 S(\overline{\mathfrak{u}}) \otimes \wedge^\bullet\overline{\mathfrak{u}}
	 \otimes \mathrm{End}(E\otimes F)
	\right)^H
	\subset
	\left(
	 S(\overline{\mathfrak{u}})\otimes 
	 \mathrm{End}(\wedge^\bullet\overline{\mathfrak{u}}\otimes E\otimes F)
	\right)^H
	$$
\end{lemma}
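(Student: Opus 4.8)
The plan is to follow the proof of Lemma~\ref{koszul} almost verbatim; the only new feature is that one must carry along the (in general infinite-dimensional) \emph{passive} coefficient $\mathrm{End}(E\otimes F)$, on which $\partial_{\mathfrak{u}}$ acts trivially.

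First I would unwind the identifications. As $B$ puts $\mathfrak{u}$ and $\overline{\mathfrak{u}}$ in perfect duality, $\mathfrak{h}^\perp=\mathfrak{u}\oplus\overline{\mathfrak{u}}$, so $S(\mathfrak{h}^\perp)=S(\mathfrak{u})\otimes S(\overline{\mathfrak{u}})$ and, through $\mathrm{End}(\wedge^\bullet\overline{\mathfrak{u}})\simeq\wedge^\bullet\mathfrak{h}^\perp$, also $\wedge^\bullet\mathfrak{h}^\perp=\wedge^\bullet\mathfrak{u}\otimes\wedge^\bullet\overline{\mathfrak{u}}$. Inspecting $\partial_{\mathfrak{u}}(u\otimes\omega)=\sum_iX_iu\otimes\iota(X_i)\omega$ with $X_i\in\mathfrak{u}$, the differential $\partial_{\mathfrak{u}}\otimes I_{E\otimes F}$ multiplies only in the $S(\mathfrak{u})$ factor and contracts only in the $\wedge^\bullet\mathfrak{u}$ factor; so, up to a harmless reordering of tensor factors, the complex $\bigl(S(\mathfrak{h}^\perp)\otimes\wedge^\bullet\mathfrak{h}^\perp\otimes\mathrm{End}(E\otimes F),\,\partial_{\mathfrak{u}}\otimes I\bigr)$ is the tensor product $K^\bullet\otimes N$, where $K^\bullet=\bigl(S(\mathfrak{u})\otimes\wedge^\bullet\mathfrak{u},\,d_{\mathrm{Kos}}\bigr)$ is the ordinary Koszul complex along $\mathfrak{u}$ and $N=S(\overline{\mathfrak{u}})\otimes\wedge^\bullet\overline{\mathfrak{u}}\otimes\mathrm{End}(E\otimes F)$ is given the zero differential; both factors are $H$-modules with $H$ acting diagonally.

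I would then compute cohomology before imposing $H$-invariance. The Koszul complex $K^\bullet$ is the standard free resolution of $\mathbb{C}=S(\mathfrak{u})/\mathfrak{u}\,S(\mathfrak{u})$, so $H(K^\bullet)=\mathbb{C}$, sitting in symmetric and exterior degree $0$, and $\mathbb{C}\cdot(1\otimes 1)\hookrightarrow K^\bullet$ is a quasi-isomorphism. By the Künneth formula over $\mathbb{C}$, $H_{\partial_{\mathfrak{u}}}\bigl(S(\mathfrak{h}^\perp)\otimes\wedge^\bullet\mathfrak{h}^\perp\otimes\mathrm{End}(E\otimes F)\bigr)\simeq N=S(\overline{\mathfrak{u}})\otimes\wedge^\bullet\overline{\mathfrak{u}}\otimes\mathrm{End}(E\otimes F)$, realised by the representatives of symmetric and exterior degree $0$ along $\mathfrak{u}$; inside $S(\overline{\mathfrak{u}})\otimes\mathrm{End}(\wedge^\bullet\overline{\mathfrak{u}}\otimes E\otimes F)=S(\overline{\mathfrak{u}})\otimes\wedge^\bullet\overline{\mathfrak{u}}\otimes\wedge^\bullet\mathfrak{u}\otimes\mathrm{End}(E\otimes F)$ this is the ``creation operators only'' part $\wedge^\bullet\overline{\mathfrak{u}}\otimes 1$. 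This is the analogue of the first assertion of Lemma~\ref{koszul}.

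The one delicate point, as in the second half of Lemma~\ref{koszul}, is that taking $H$-invariants commutes with this cohomology, giving $H_\partial=\bigl(S(\overline{\mathfrak{u}})\otimes\wedge^\bullet\overline{\mathfrak{u}}\otimes\mathrm{End}(E\otimes F)\bigr)^H$; one cannot simply average, since $\mathrm{End}(E\otimes F)$ need not be finite dimensional ($E$ has only finite length). Here I would use the grading element $\xi_0$. Since $\mathfrak{h}=\mathfrak{z}_\mathfrak{g}(\xi_0)$ contains the fundamental Cartan subalgebra $\mathfrak{t}$, the centre $Z(\mathfrak{h})$ lies in $\mathfrak{z}_\mathfrak{g}(\mathfrak{t})=\mathfrak{t}$ and so consists of $\mathfrak{g}$-semisimple elements; combined with complete reducibility for $\mathfrak{h}_{\mathrm{ss}}$, this shows that $\mathfrak{h}$ acts semisimply on $\mathfrak{u}$, hence on each finite-dimensional $S^p(\mathfrak{u})\otimes\wedge^q\mathfrak{u}$. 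Because $\xi_0$ is central in $\mathfrak{h}$ with strictly positive $\mathrm{ad}$-eigenvalues on $\mathfrak{u}$, the complex $K^\bullet$ splits, as a complex of $H$-modules, into its $\xi_0$-weight pieces $K^\bullet_\lambda$, each finite dimensional, with $K^\bullet_0=\mathbb{C}\cdot(1\otimes 1)$ and $K^\bullet_\lambda$ acyclic for $\lambda>0$. An acyclic complex of finite-dimensional semisimple $H$-modules is $H$-equivariantly contractible, so $\mathbb{C}[0]\hookrightarrow K^\bullet$ is an $H$-equivariant homotopy equivalence; tensoring the homotopy with $I_N$ and restricting to $H$-invariants yields $H_\partial\simeq N^H$, the claimed formula. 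I expect this equivariant splitting to be the crux: once $\mathfrak{u}$ is a semisimple $\mathfrak{h}$-module the coefficient $\mathrm{End}(E\otimes F)$ is entirely inert and the argument reduces to Lemma~\ref{koszul} (equivalently Lemma~3.5 of \cite{HuangPandzicRenard-arxiv}), whereas without semisimplicity the interchange of $H$-invariants with cohomology in the infinite-dimensional coefficient is not automatic.
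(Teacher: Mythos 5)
Your proposal is correct and follows essentially the same route as the paper, which simply defers to Lemma \ref{koszul} (i.e.\ Lemma 3.5 of \cite{HuangPandzicRenard-arxiv}): split off the Koszul complex $S(\mathfrak{u})\otimes\wedge^\bullet\mathfrak{u}$, note its cohomology is $\mathbb{C}$ in degree $0$, treat $S(\overline{\mathfrak{u}})\otimes\wedge^\bullet\overline{\mathfrak{u}}\otimes\mathrm{End}(E\otimes F)$ as inert, and use the $\xi_0$-grading to commute $H$-invariants with cohomology. Your equivariant-contractibility step via semisimplicity of the finite-dimensional $\xi_0$-weight pieces is a sound (if slightly roundabout) way to justify that interchange -- indeed the standard Koszul contracting homotopy is already $\mathrm{GL}(\mathfrak{u})$-equivariant, hence $H$-equivariant, so the argument goes through exactly as you describe.
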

In particular, considering the action of the element $\xi_0$ in the center 
of $\mathfrak{h}$ one sees that 
$H_{\partial}$ is finite dimensional, so the spectral 
sequence converges to $H_{d_{\Delta_F(\dolbeault(E))}}$.
Moreover if $Z\in \mathcal{Z}(\mathfrak{g})$, then by 
\cite[Lemma 4.123]{KV} we may write
$Z=p(Z)+uX$ with $u\in\mathcal{U}(\mathfrak{g})$ and $X\in\mathfrak{u}$.
It follows that the images of $\mathrm{Gr}(Z-p(Z))\in 
\mathcal{U}(\mathfrak{g})\mathfrak{u}\otimes 1$ so  
$\mathrm{Gr}(Z-p(Z))= d_0 \omega_0$
with $\omega_0 \in (S(\mathfrak{g})\mathfrak{u}\otimes 1)
\oplus(S(\mathfrak{g})\otimes (\oplus_{k>0}\wedge^k\mathfrak{u}))$.
Proceeding by induction on constructs a sequence
$\omega_r$ of operators with decreasing ordrer
such that $Z-p(Z)-\sum_{k<r}\omega_{k}=d_r(\omega_r)$.
It follows that $Z=p(Z)$ in $H_{d_{\Delta_F(\dolbeault(E))}}$.
Hence if $Z\in\mathcal{Z}(\mathfrak{g})$ its
representative in $H_{d_{\Delta_F(\dolbeault(E))}}$
is given by
$c\otimes\sigma\otimes\pi(p(Z))\in\mathrm{End}_H(\wedge^\bullet 
\overline{\mathfrak{u}} \otimes E\otimes F)$.
We then have in $H_{d_{\Delta_F(\dolbeault(E))}}$
\begin{align*}
l(Z)&=r(Z)^\sharp\otimes I_{\wedge^\bullet\overline{\mathfrak{u}}\otimes E \otimes F} \\
&=1\otimes c\otimes\sigma\otimes \pi(p(Z)) \,.
\end{align*}


\bibliographystyle{alpha}

\begin{thebibliography}{HPR06}

\bibitem[AM00]{AM00}
A.~Alekseev and E.~Meinrenken.
\newblock The non-commutative {W}eil algebra.
\newblock {\em Invent. Math.}, 139(1):135--172, 2000.

\bibitem[AM05]{AM05}
Anton Alekseev and Eckhard Meinrenken.
\newblock Lie theory and the {C}hern-{W}eil homomorphism.
\newblock {\em Ann. Sci. \'Ecole Norm. Sup. (4)}, 38(2):303--338, 2005.

\bibitem[Bar00]{barchini:ias:park-city}
Leticia Barchini.
\newblock Unitary representations attached to elliptic orbits. {A} geometric
  approach.
\newblock In {\em Representation theory of Lie groups (Park City, UT, 1998)},
  volume~8 of {\em IAS/Park City Math. Ser.}, pages 149--176. Amer. Math. Soc.,
  Providence, RI, 2000.

\bibitem[Duf83]{duflo}
Michel Duflo.
\newblock Opérateurs différentiels invariants et homologie des algèbres de
  lie, {\em appendice à un cours sur les opérateurs différentiels
  invriants}.
\newblock {\em Private communication}, pages 1--22, 1983.

\bibitem[HP02]{HuangPandzic2002}
Jing-Song Huang and Pavle Pand{\v{z}}i{\'c}.
\newblock Dirac cohomology, unitary representations and a proof of a conjecture
  of {V}ogan.
\newblock {\em J. Amer. Math. Soc.}, 15(1):185--202, 2002.

\bibitem[HP06]{HuangPandzic-book}
Jing-Song Huang and Pavle Pand{\v{z}}i{\'c}.
\newblock {\em Dirac operators in representation theory}.
\newblock Mathematics: Theory \& Applications. Birkh\"auser Boston Inc.,
  Boston, MA, 2006.

\bibitem[HPR05]{HuangPandzicRenard-arxiv}
Jing-Song Huang, Pavle Pand{\v{z}}i{\'c}, and David Renard.
\newblock Dirac operators and {L}ie algebra cohomology.
\newblock {\em arXiv:0503.582}, 2005.

\bibitem[HPR06]{HuangPandzicRenard}
Jing-Song Huang, Pavle Pand{\v{z}}i{\'c}, and David Renard.
\newblock Dirac operators and {L}ie algebra cohomology.
\newblock {\em Represent. Theory}, 10:299--313 (electronic), 2006.

\bibitem[Kos03]{Kostant2003}
Bertram Kostant.
\newblock Dirac cohomology for the cubic {D}irac operator.
\newblock In {\em Studies in memory of {I}ssai {S}chur ({C}hevaleret/{R}ehovot,
  2000)}, volume 210 of {\em Progr. Math.}, pages 69--93. Birkh\"auser Boston,
  Boston, MA, 2003.

\bibitem[KR00]{KoranyiReimann}
A.~Kor{\'a}nyi and H.~M. Reimann.
\newblock Equivariant first order differential operators on boundaries of
  symmetric spaces.
\newblock {\em Invent. Math.}, 139(2):371--390, 2000.

\bibitem[KV95]{KV}
Anthony~W. Knapp and David~A. Vogan, Jr.
\newblock {\em Cohomological induction and unitary representations}, volume~45
  of {\em Princeton Mathematical Series}.
\newblock Princeton University Press, Princeton, NJ, 1995.

\bibitem[Wol69]{wolf}
Joseph~A. Wolf.
\newblock The action of a real semisimple group on a complex flag manifold.
  {I}. {O}rbit structure and holomorphic arc components.
\newblock {\em Bull. Amer. Math. Soc.}, 75:1121--1237, 1969.

\bibitem[Won95]{wong:95}
Hon-Wai Wong.
\newblock Dolbeault cohomological realization of {Z}uckerman modules associated
  with finite rank representations.
\newblock {\em J. Funct. Anal.}, 129(2):428--454, 1995.

\bibitem[Won99]{wong:99}
Hon-Wai Wong.
\newblock Cohomological induction in various categories and the maximal
  globalization conjecture.
\newblock {\em Duke Math. J.}, 96(1):1--27, 1999.

\end{thebibliography}

\end{document}